\newcommand{\p}{permutation}
\newcommand{\ps}{permutations}
\newcommand{\gf}{generating function}
\newcommand{\si}{\sigma}
\newcommand{\tw}{\twoheadleftarrow}
\newcommand{\ts}{\twoheaddownarrow}
\newcommand{\tsw}{\twoheadswarrow}
\newcommand{\tnw}{\twoheadnwarrow}
\newcommand{\ew}{\leftarrow}
\newcommand{\es}{\downarrow}
\newcommand{\esw}{\swarrow}
\newcommand{\enw}{\nwarrow}
\def\emm#1,{{\em #1}}
\newcommand{\cB}{\mathcal B}
\newcommand{\nid}{\noindent}
\DeclareMathOperator{\R}{R}
\DeclareMathOperator{\LB}{LB}
\DeclareMathOperator{\LVB}{LVB}
\DeclareMathOperator{\LHB}{LHB}
\DeclareMathOperator{\B}{B}
\DeclareMathOperator{\UR}{UR}
\DeclareMathOperator{\UHR}{UHR}
\DeclareMathOperator{\UVR}{UVR}
\def\mn{\mbox{-}}
\theoremstyle{plain}
\newtheorem{Theorem}{Theorem}[section]
\newtheorem{Lemma}[Theorem]{Lemma}
\newtheorem{Proposition}[Theorem]{Proposition}
\newtheorem{Corollary}[Theorem]{Corollary}
\newtheorem{Observation}[Theorem]{Observation}
\theoremstyle{definition}
\newtheorem{Definition}[Theorem]{Definition}
\def\section{\@startsection{section}{1}%
 \z@{.7\linespacing\@plus\linespacing}{.5\linespacing}%
 {\normalfont\bfseries\scshape\centering}}
\def\subsection{\@startsection{subsection}{2}%
  \z@{.5\linespacing\@plus\linespacing}{.5\linespacing}%
  {\normalfont\bfseries\scshape}}
\def\subsubsection{\@startsection{subsubsection}{6}%
 \z@{.5\linespacing\@plus\linespacing}{-.5em}%{.5\linespacing}%
  {\normalfont\bfseries\itshape}}
\begin{document}

\author[A. Asinowski]{Andrei Asinowski}

\address{AA: Dept.\ of Mathematics, Technion---Israel Inst.\ of Technology, Haifa 32000, Israel.
Current address: Institut f\"{u}r Informatik, Freie Universit\"{a}t Berlin,
Takustra{\ss}e~9, D-14195 Berlin, Germany}
\email{asinowski@mi.fu-berlin.de} 

\author[G. Barequet]{Gill Barequet}

\address{GB: Dept.\ of Computer Science, Technion---Israel Inst.\ of Technology, Haifa 32000, Israel,
\emph{and}
Dept.\ of Computer Science, Tufts University, Medford, MA 02155}
\email{barequet@cs.technion.ac.il}

\author[M. Bousquet-M\'elou]{Mireille Bousquet-M\'{e}lou}
\address{MBM: CNRS, LaBRI, UMR 5800, Universit\'e de Bordeaux,
351 cours de la Lib\'eration, 33405 Talence Cedex, France}
\email{mireille.bousquet@labri.fr}

\author[T. Mansour]{Toufik Mansour}
\address{TM: Dept.\ of Mathematics, Univ.\ of Haifa, Mount Carmel, Haifa 31905, Israel}
\email{toufik@math.haifa.ac.il}

\author[R. Pinter]{Ron Y.\ Pinter}
\address{RP: Dept.\ of Computer Science, Technion---Israel Inst.\ of Technology, Haifa 32000, Israel}
\email{pinter@cs.technion.ac.il}

\title[Orders induced by segments in floorplans]
{Orders induced by segments in floorplans\\
and $(2 \mn 14 \mn 3, 3 \mn 41 \mn 2)$-avoiding permutations}

\date{Sept. 6th, 2012}

%%%%%%%%%%%%%%%%%%%%%%%%%%%%%%%%%%%%%%%%%%%%%%%%%%%%%%%%%%%%%%%%%%%%

\begin{abstract}
A  floorplan is a tiling of a rectangle by rectangles.
There are natural ways to order the elements---rectangles and segments---of a floorplan.
Ackerman, Barequet and Pinter studied
a pair of orders induced by
neighborhood relations between {rectangles},
and obtained a natural bijection between these pairs
and $(2\mn 41 \mn 3, 3 \mn 14 \mn 2)$-avoiding permutations, also
known as (reduced) Baxter permutations.

In the present paper, we first perform a similar study for a pair of
orders induced by neighborhood relations between \emph{segments}
of a floorplan. We obtain a natural bijection between these
pairs and another family of permutations, namely
$(2\mn 14 \mn 3, 3 \mn 41 \mn 2)$-avoiding permutations.

Then, we investigate relations between
the two kinds of pairs of orders---and, correspondingly, between
$(2\mn 41 \mn 3, 3 \mn 14 \mn 2)$- and $(2\mn 14 \mn 3, 3 \mn 41 \mn
2)$-avoiding permutations. In particular, we prove that the
superposition of both permutations gives a \emm complete, Baxter
permutation (originally called \emm $w$-admissible, by Baxter
and Joichi in the sixties). In other words,  $(2\mn 14 \mn 3, 3 \mn 41 \mn
2)$-avoiding permutations are  the hidden part of complete
Baxter \ps. We enumerate these \ps.
To  our knowledge,  the characterization of these \ps\ in terms of
forbidden patterns and their enumeration are both new results.

Finally, we also study the special case of
the
so-called  \emm guillotine, floorplans.
\end{abstract}

\keywords{Floorplans,
Permutation patterns,
Baxter permutations,
Guillotine floorplans.}
%\normalsize

\maketitle

%%%%%%%%%%%%%%%%%%%%%%%%%%%%%%%%%%%%%%%%%%%%%%%%%%%%%%%%%%%%
\section{Introduction}
%%%%%%%%%%%%%%%%%%%%%%%%%%%%%%%%%%%%%%%%%%%%%%%%%%%%%%%%%%%%

A \emph{floorplan}\footnote{Sometimes called \emm mosaic floorplan, in the literature.}
is a partition of a rectangle
into interior-disjoint rectangles
such that no point belongs to the boundary
of four rectangles  (Fig.~\ref{fig:r_equivalence}).
We call  \emph{segment} of the floorplan any straight line, not
included in the boundary of the partitioned rectangle,  that is the
union of some rectangle sides, and is maximal for this property.
For example, each of the floorplans of Fig.~\ref{fig:r_equivalence}
has four horizontal and four vertical segments.
Since four rectangles of a floorplan never meet, the segments do not
cross, and a meeting of segments
has one of the following forms:
$\leftvdash$, $\upvdash$, $\rightvdash$, $\downvdash$ 
(but not $\bigplus$ ).

\begin{figure}[ht]
\begin{center}
\resizebox{100mm}{!}{\includegraphics{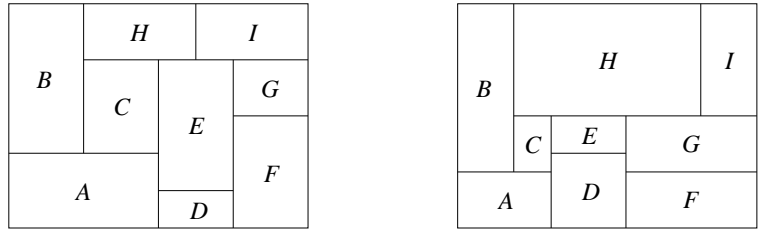}}
\caption{Two R-equivalent floorplans.}
\label{fig:r_equivalence}
\end{center}
\end{figure}

An easy induction shows that the number of segments in
a floorplan is smaller than the number of rectangles by $1$.
Throughout the paper, for a given floorplan $P$,
the number of segments in $P$ is denoted by $n$; accordingly, $n+1$ is
the number of rectangles in $P$.
We say
 that $P$ has \emph{size} $n+1$. For instance,
the floorplans in Fig.~\ref{fig:r_equivalence}
have size $9$.

Many papers have appeared
about floorplans, not only in
combinatorics  but also in the computational
geometry literature~\cite{cls, lprs, ms}. The interest in floorplans
is motivated, in particular, by the fact that
their generation is a critical stage
in integrated circuit layout~\cite{ld, l, rivest, watanabe, wkc},
in architectural designs~\cite{baybars, cousin, flemming, steadman1, steadman2}, etc.

\medskip

The present paper is combinatorial in nature, and describes the
relationship between a pair of natural orders defined on the segments
of a floorplan and certain pattern avoiding permutations. It parallels
a previous paper
of Ackerman, Barequet and Pinter in which a similar
study was carried out for a pair of orders defined on \emm rectangles, of a
floorplan, in connection with the so-called \emm Baxter
\ps,~\cite{abp}. These \ps\ were introduced in the sixties, and
were  originally called  \emm reduced, Baxter \ps, as opposed to \emm
complete, Baxter \ps~\cite{baxter,baxter-joichi,boyce67}. A complete Baxter \p\ $\pi$ has an odd size, say
$2n+1$, and is completely determined by its values  at odd points, $\pi(1), \pi(3), \ldots,
\pi(2n+1)$. After normalization, these values give the associated \emm
reduced, Baxter \p \ $\pi_o$ (the subscript 'o' stands for odd; we
denote by $\pi_e$ the \p\ obtained by normalizing  the list $\pi(2), \pi(4), \ldots,
\pi(2n)$).  Our paper
provides the even part of the theory initiated
in~\cite{abp}: we prove that the
\p\ associated with the segments and  the permutation associated with
the rectangles are respectively the even and odd parts $\pi_e$ and $\pi_o$ of the same
complete Baxter \p\ $\pi$. We also characterize the even parts of Baxter \ps\
in terms of forbidden patterns and enumerate them.

This satisfactory final picture is, however, not our original motivation
for studying segments of floorplans. Instead, our starting point was
the observation that many questions on floorplans deal with segments
rather than rectangles. An interesting example is the
\emph{rectangulation conjecture}~\cite[Conj.~7.1]{abp2}.
It asserts that
for any  floorplan $P$ of size $n+1$,
and any set $\Pi$ of $n$ points of the rectangle such that no two of them lie on the same
horizontal or vertical line,
there exists a floorplan $P'$, R-equivalent\footnote{This equivalence
  relation is defined further down in the introduction.} to $P$,
such that each point of $\Pi$
lies on a segment of $P'$
(see Fig.~\ref{fig:embed} for an illustration).
Such an embedding
of points in segments of a floorplan
is called a \emph{rectangulation}.
The rectangulation conjecture was suggested
in
connection with
the minimization of the total length of segments in certain families
of rectangulations~\cite{lprs}.
Hence, the initial motivation of this paper was to provide us with a
better combinatorial
insight into problems related to floorplans and, in particular, to
rectangulations.

\begin{figure}[ht]
\begin{center}
\resizebox{90mm}{!}{\includegraphics{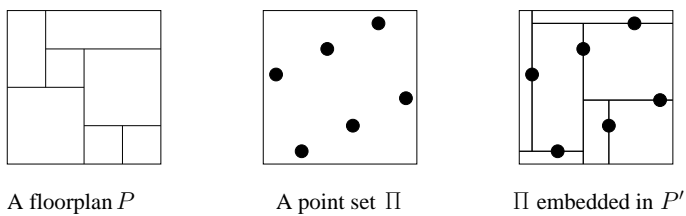}}
\caption{A rectangulation: embedding a point set $\Pi$ in a floorplan $P$.}
\label{fig:embed}
\end{center}
\end{figure}

\medskip
In order to present our results in greater detail, we first need to
describe the related results obtained for rectangles in~\cite{abp}.
In that paper, the authors study a representation
of two
order relations between rectangles in floorplans
by means of permutations.
These order relations are induced by \emph{neighborhood relations},
which are defined as follows.
A rectangle $A$ is a \emph{left-neighbor} of $B$ (equivalently,
$B$ is a \emph{right-neighbor} of $A$) if there is a vertical segment
in the floorplan that contains the right side of $A$ and the left side
of $B$. (Note that
the right side of $A$ and the left side of $B$ may be disjoint.)
Now, the relation \emph{``$A$ is to the left of $B$''} (equivalently,
\emph{``$B$ is to the right of $A$''}), denoted by $A \ew B$, is defined
as the transitive closure of the relation ``$A$ is a left-neighbor of
$B$.'' Finally, the relation $\tw$ is
the reflexive closure of $\ew$.
The terms \emph{$A$ is a below-neighbor of $B$} (equivalently,
\emph{$B$ is an above-neighbor of $A$}) and \emph{$A$ is below $B$}
(equivalently, \emph{$B$ is above $A$}) are defined similarly, as well
as the notation $A \es B$ for ``$A$ is below $B$,''\footnote{Hence, $A \es B$
should be understood as
$\begin{array}{c}
   B \\
   \es \\
   A
\end{array}$; and similarly for $\ts$.}
 and $A \ts B$ for
``$A=B$ or $A \es B$.''
It is easy to see that  the relations $\tw$ and $\ts$ are partial
orders.
In both floorplans of Fig.~\ref{fig:r_equivalence}, we have, among
other relations,
 $A \es I$
(because $A$ is a below-neighbor of $C$, and $C$ is a below-neighbor
of $I$),
$A \ew G$, and $B \ew F$.

The following results
are proved
in~\cite{abp}.
Let $P$ be a floorplan of size $n+1$.
Two distinct rectangles $A$ and $B$ of $P$ are
 in \emph{exactly one}
of the relations $A \ew B$, $B \ew A$, $A \es B$, or $B \es A$.
It follows that the relations $\tsw$ and $\tnw$ between rectangles of
$P$ defined by
 \begin{center}
 \emph{$A \tsw B$ \ \ \ if \ \ \ $A=B$, or $A$ is to the left of $B$,
   or $A$ is below $B$},\\
\emph{$A \tnw B$ \ \ \ if \ \ \ $A=B$, or $A$ is to the left of $B$,
  or $A$ is above $B$},
\end{center}
are \textit{linear} orders
(the signs $\tsw$ and $\tnw$ are intended to resemble the inequality
sign
$\leqslant$).
Each of these orders can be used
to label
 the rectangles of $P$ by $1, 2, \dots, n+1$.
In the $\tsw$~order, the rectangle in the lower left corner
is labeled $1$, and the rectangle in the upper right corner $n+1$.
In the $\tnw$~order, the rectangle in the upper left corner 
is labeled $1$, and the rectangle in the lower right corner $n+1$.
Let $R(P)$ be the sequence
$a_1 \, a_2 \, \dots \, a_{n+1},$ where, for
 all $1 \leq i \leq n+1$, $a_i$ is the label in the $\tsw$~order of
 the rectangle which is labeled $i$ in the $\tnw$~order.
It is clear that $R(P)$ is a permutation of $[n+1]:= \{1, 2, \dots, n+1\}$;
we call it the \emph{R-permutation} of $P$.
Loosely speaking, $R(P)$ is
obtained by labeling the rectangles
according to the $\tsw$~order, and then reading these labels while
passing the rectangles according to the $\tnw$~order.
Fig.~\ref{fig:r_ordering} shows the construction of the R-permutation of a floorplan $P$;
the right part of the figure is the graph of $\rho=R(P)$, that is,
the point set $\{(i, \rho(i)): \ 1 \leq i \leq n+1 \}$.

\begin{figure}[ht]
\begin{center}
\resizebox{120mm}{!}{\includegraphics{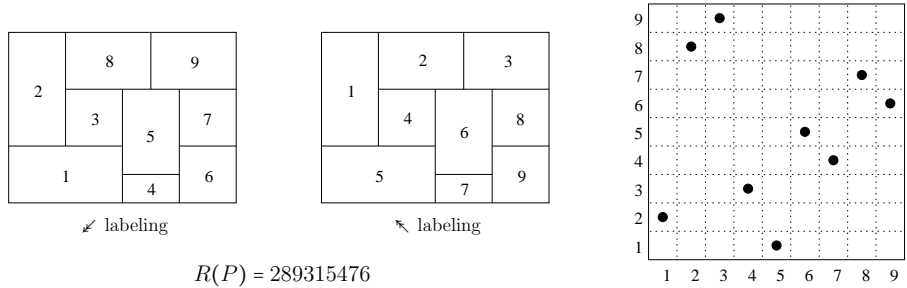}}
\caption{Constructing the R-permutation of a floorplan $P$.}
\label{fig:r_ordering}
\end{center}
\end{figure}

Two floorplans $P_1$ and $P_2$ of size $n+1$
are said to be \emph{R-equivalent}\footnote{In~\cite{abp}, two
  R-equivalent floorplans are actually treated as
two representations of the same floorplan.}
if there exists a labeling of the rectangles of $P_1$
by $A_1, A_2, \ldots, A_{n+1}$
and a labeling  of the rectangles of $P_2$
by $B_1, B_2, \ldots, B_{n+1}$ such that for all $k, m \in
[n+1]$,
the rectangles $A_k$ and $A_m$ exhibit the same
neighborhood relation as $B_k$ and $B_m$.
The two floorplans in Fig.~\ref{fig:r_equivalence} are R-equivalent:
this follows from the labeling presented in this figure (in fact, $A, B, C, \ldots I$
is the $\tsw$-order).
It is easy to prove that two floorplans are R-equivalent if
and only if they have the same R-permutation.

The main results of~\cite{abp} state that
for any floorplan $P$, its R-permutation is a $(2\mn 41 \mn 3, 3 \mn
14 \mn 2)$-avoiding permutation\footnote{This notation is explained
in Section~\ref{sec:patterns}.},
originally called  \emph{(reduced) Baxter permutation};
moreover, $R$ is a bijection between R-equivalence classes of floorplans and
$(2\mn 41 \mn 3, 3 \mn 14 \mn 2)$-avoiding permutations.
Through this correspondence, the size of a floorplan becomes the
size of the \p, and
the order relations between rectangles in $P$ can be easily read from
$R(P)$.

\medskip

We can now describe our results   in greater detail.

In the first part of the paper, we develop for \emm
segments, a theory that parallels
the theory developed for \emm rectangles, in~\cite{abp}. We define two
order relations between segments (Section~\ref{sec:orders}), which leads to the notion of S-equivalent floorplans\footnote{In the
notion of R-equivalence and S-equivalence, \emph{R} stands for
\emph{rectangles} and   \emph{S} for \emph{segments}.}.
Then we use these orders
to construct a permutation $S(P)$
called
the \emph{S-permutation} of $P$. 
In Section~\ref{sec:bij} we prove that
S-permutations coincide with $(2\mn14\mn3, 3\mn41\mn2)$-avoiding
permutations, and that $S$, regarded as a function from S-equivalence
classes 
 to $(2\mn14\mn3, 3\mn41\mn2)$-avoiding
permutations, is a bijection. The description of $S$ and $S^{-1}$ are fairly simple (both can be constructed in linear time), but as often, the
proof remains technical, despite our efforts to write it carefully.

In the second part of the paper, we
super-impose our theory with the analogous theory developed for
rectangles in~\cite{abp}.
In Section~\ref{sec:seg_rec} we
show that R-equivalence of floorplans implies their S-equivalence
(this means that the R-equivalence
refines
the S-equivalence),
and explain how $S(P)$ can be constructed directly from $R(P)$.
This construction shows that $S(P)$ and $R(P)$, combined together,
form the so-called \emm complete Baxter \p, associated with $R(P)$, as
defined in the seminal papers on Baxter \ps~\cite{baxter,baxter-joichi,boyce67}.
We also describe in terms of $R$ when two floorplans give rise to the
same S-permutation.
This is another difficult proof, but we need  this description to
express the number of $(2\mn14\mn3, 3\mn41\mn2)$-avoiding permutations
in terms of the number of Baxter \ps\ (Section~\ref{sec:enum}).

To finish, in Section~\ref{sec:guil} we characterize and enumerate
S-permutations corresponding to the so-called \emm guillotine, floorplans;
a similar study was carried out in~\cite{abp} for R-permutations.
We end in Section~\ref{sec:summary} with a few remarks.

%%%%%%%%%%%%%%%%%%%%%%%%%%%%%%%%%%%%%%%%%%%%%%%%%%%%%%%%%%%%%%%%%%%%
\section{Orders between segments of a floorplan}
\label{sec:orders}
%%%%%%%%%%%%%%%%%%%%%%%%%%%%%%%%%%%%%%%%%%%%%%%%%%%%%%%%%%%%%%%%%%%%

In this section we define
neighborhood relations between {segments} of a floorplan,
 use them to define two partial orders (denoted $\tw$ and $\ts$) and two linear orders (denoted $\tsw$ and $\tnw$),
and prove several facts about these orders.  Most of them are
analogous to facts about the orders on rectangles mentioned
in the introduction, and proved in~\cite{abp}.

\begin{Definition}
Let $P$ be a floorplan.
Let $I$ and $J$ be two segments in $P$.
We say that $I$ is a \emph{left-neighbor} of $J$
(equivalently, $J$ is a \emph{right-neighbor} of $I$) if one of the following holds:
\begin{itemize}
  \item $I$ and $J$ are vertical, and there is \emph{exactly one} rectangle $A$ in $P$
    such that the left side of $A$ is contained in $I$ and the right
    side of $A$ is contained in $J$;
  \item $I$ is vertical, $J$ is horizontal, and the left endpoint of $J$ lies in $I$; or
  \item $I$ is horizontal, $J$ is vertical, and the right endpoint of
    $I$ lies in $J$.
\end{itemize}
The terms ``\emph{$I$ is a below-neighbor of $J$}'' (equivalently,
``\emph{$J$ is an above-neighbor of $I$}'')
are defined similarly.
\end{Definition}

Typical examples are shown in Fig.~\ref{fig:left}.

\begin{figure}[h]
\begin{center}
\resizebox{130mm}{!}{\includegraphics{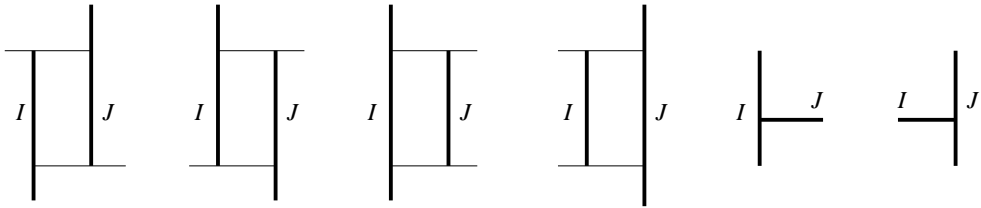}}
\caption{The segment $I$ is a left-neighbor of the segment $J$.}
\label{fig:left}
\end{center}
\end{figure}

Note that a horizontal segment $I$ has at most one left-neighbor and
at most one right-neighbor
(no such neighbor(s) when the corresponding endpoint(s) of $I$ lie on the boundary),
which are both vertical segments.
In contrast, a vertical segment
may have several left- and right-neighbors, which
may be horizontal or vertical.
This is illustrated in Fig.~\ref{fig:right-neighbors}.

\begin{figure}[h]
\begin{center}
\resizebox{35mm}{!}
{\includegraphics{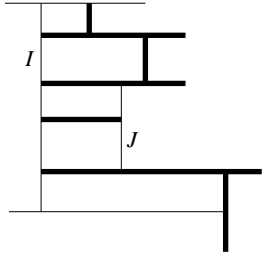}}
\caption{The right-neighbors of a vertical segment $I$ (thick segments).
Note that the vertical segment
$J$ is \emm not, a right-neighbor of $I$.}
\label{fig:right-neighbors}
\end{center}
\end{figure}

\begin{Definition}
 The relation \emph{``$I$ is to the left of $J$''}
(equivalently, \emph{$J$ is to the right of $I$}), denoted by $I \ew J$, is
the transitive closure of the relation ``$I$ is a left-neighbor of $J$.''
The relation $\tw$
is the reflexive closure of $\ew$.
  The relations $I \es J$ (``\textit{$I$ is below $J$}'') and $I \ts J$
  (for ``$I=J$ or $I \es J$'') are defined similarly.
\end{Definition}

\begin{Observation} \label{the:par}
The relations $\tw$ and $\ts$ are partial orders.
\end{Observation}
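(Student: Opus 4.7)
The three properties to check are reflexivity, transitivity, and antisymmetry of $\tw$ (the argument for $\ts$ is identical upon swapping $x$- and $y$-coordinates). Reflexivity is immediate, since $\tw$ is defined as the reflexive closure of $\ew$. Transitivity comes essentially for free as well: the relation $\ew$ is a transitive closure and thus transitive, and one only needs to verify by a trivial three-way case-split on whether $I=J$ or $J=K$ (or neither) that adjoining the diagonal pairs does not destroy transitivity.

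The heart of the argument is antisymmetry, which is equivalent to the assertion that the directed graph whose arcs encode the left-neighbor relation between segments of $P$ has no directed cycle. The plan is to exhibit a real-valued function on segments that strictly increases along every left-neighbor step. A natural candidate is
\[
  c(S) \;:=\; \tfrac{1}{2}\bigl(L(S)+R(S)\bigr),
\]
where $L(S)$ and $R(S)$ denote respectively the smallest and largest $x$-coordinates of points of $S$; thus $c(S)=x(S)$ if $S$ is vertical, and $c(S)$ is the mid-abscissa of $S$ if $S$ is horizontal.

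One now checks that each of the three cases in Definition~2.1 produces the strict inequality $c(I)<c(J)$. In the vertical-vertical case the rectangle $A$ has positive width, so $c(I)=x(I)<x(J)=c(J)$. In the vertical-horizontal case, the left endpoint of $J$ sits on $I$, so $c(I)=x(I)=L(J)<c(J)$ because $J$ has positive length. The horizontal-vertical case is symmetric: $c(I)<R(I)=x(J)=c(J)$. Consequently, if $I\ew J$, then along any chain of left-neighbor steps witnessing this, $c$ is strictly increasing, so $c(I)<c(J)$; in particular $I\ne J$. Antisymmetry of $\tw$ follows: $I\tw J$ implies $c(I)\le c(J)$, so $I\tw J$ and $J\tw I$ force $c(I)=c(J)$, ruling out $I\ew J$, hence $I=J$.

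The only real obstacle is the bookkeeping of the three cases of the definition; there is no conceptual difficulty, but one must be attentive to the fact that segments have strictly positive length and that a separating rectangle has strictly positive width, so that in each case the inequality is strict rather than merely weak.
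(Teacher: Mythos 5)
Your proof is correct and follows essentially the same route as the paper: the paper's antisymmetry argument also rests on the observation that abscissas strictly increase along the left-neighbor relation (stated there as ``any interior point of $I$ has a smaller abscissa than any interior point of $J$''), which is the same monotone-potential idea as your mid-abscissa function $c$. Your version just packages the increasing quantity as a single explicit number per segment; no substantive difference.
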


\begin{proof}
We prove the claim for the relation $\tw$.
Reflexivity and transitivity follow
 from the definition.
For antisymmetry, note that $I \ew J$ and $J \ew I$ cannot hold
simultaneously because if $I \ew J$, then any interior point of $I$
has a smaller abscissa than any interior point of $J$.
\end{proof}

The following observation may help to understand the $\tw$
order. If $I$ and $J$ are vertical segments and right-left neighbors,
let us create a horizontal edge, called \emph{traversing edge}, in
the rectangle~$A$ that lies between them. Fig.~\ref{fig:traversing-edges} shows
a chain of neighbors in the $\tw$ order, starting from a vertical
segment $I$, and the corresponding traversing edges (dashed lines).

\begin{figure}[h]
\begin{center}
\resizebox{70mm}{!}
{\includegraphics{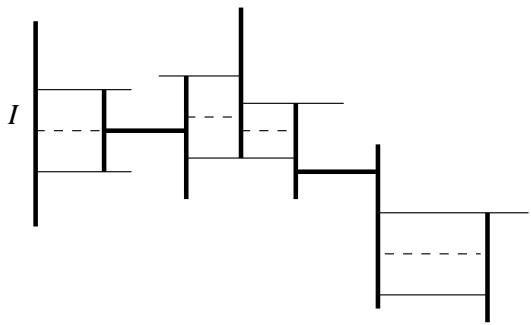}}
\caption{A chain in the $\tw$ order (thick segments), and the
  corresponding traversing edges (dashed lines).}
\label{fig:traversing-edges}
\end{center}
\end{figure}

\begin{Observation}
\label{the:path}
Assume $I\tw J$. Then any point of $J$ lies weakly to the right of any
point of $I$ (that is, its abscissa is at least as large).

Let $x$ (respectively, $y$) be a point of minimal
(respectively, maximal) abscissa on $I$ (respectively, $J$). Then
there exists a polygonal line 
from $x$ to $y$ formed of vertical and horizontal sections, such that
\begin{enumerate}
\item [--] each vertical section is part of a vertical segment of the
  floorplan $P$,
\item [--] each horizontal section is an (entire) horizontal segment
  of $P$, or a traversing edge of $P$, visited from left to right,
\item [--] if $I$ (respectively, $J$) is horizontal, it is entirely included
  in the polygonal line .
\end{enumerate}
\end{Observation}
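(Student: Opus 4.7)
The plan is to prove both assertions by induction on the length $k$ of a chain of left-neighbors $I = I_0 \ew I_1 \ew \cdots \ew I_k = J$ witnessing $I \tw J$. The base case $k=0$ reduces to $I=J$; the first assertion is immediate, and the second is realized by the trivial path (a single point if $I$ is vertical, or the segment $I$ itself traversed left-to-right if $I$ is horizontal, which settles condition (3)).

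For the inductive step of the first assertion, it suffices to handle a single left-neighbor step $I_0 \ew I_1$ and combine with the inductive hypothesis applied to $I_1 \tw I_k$. In each of the three defining cases the abscissa comparison is immediate: if $I_0, I_1$ are both vertical and separated by a rectangle $A$, then every point of $I_0$ has the abscissa of the left side of $A$ and every point of $I_1$ has the abscissa of the right side of $A$; if $I_0$ is vertical and $I_1$ horizontal, the left endpoint of $I_1$ lies on $I_0$ and $I_1$ extends weakly to the right from that point; the remaining case is symmetric.

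For the second assertion, in each of the three cases I would build the initial piece of the polygonal line from $x \in I_0$ to a minimal-abscissa point of $I_1$. In Case~1 (both vertical with rectangle $A$ between), go along $I_0$ from $x$ to the left endpoint of the traversing edge of $A$, cross via that edge, and land on $I_1$; since $I_1$ is vertical, any such arrival point qualifies as a minimal-abscissa point. In Case~2 ($I_0$ vertical, $I_1$ horizontal), go along $I_0$ from $x$ to the left endpoint of $I_1$. In Case~3 ($I_0$ horizontal, $I_1$ vertical), the point $x$ is necessarily the left endpoint of $I_0$, and traversing $I_0$ entirely reaches its right endpoint, which lies on $I_1$; this simultaneously realizes condition (3) for $I$. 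Applying the inductive hypothesis to the shorter chain $I_1 \ew \cdots \ew I_k$ from this arrival point yields a polygonal line to $y$, and concatenation produces the desired path; condition (3) for $J = I_k$ is provided by the inductive hypothesis.

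The main obstacle is the routine but careful verification that conditions (1)--(2) on the sections of the constructed polygonal line are preserved at every junction between consecutive pieces. The key observation is that if some intermediate segment $I_j$ is horizontal, then the incoming step $I_{j-1} \ew I_j$ must fall under Case~2 (entering at the left endpoint of $I_j$) and the outgoing step $I_j \ew I_{j+1}$ must fall under Case~3 (exiting at the right endpoint of $I_j$); hence $I_j$ is automatically traversed in full, matching condition (2). If $I_j$ is vertical, navigation along it uses only vertical sections of $I_j$, consistent with condition (1), and every horizontal section that appears comes either from an entire horizontal segment of $P$ or from a traversing edge (Case~1), both of which are explicitly allowed by condition (2).
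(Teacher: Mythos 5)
Your proof is correct and takes essentially the same route as the paper, which simply reduces to the case where $J$ is a right-neighbor of $I$ and declares the three cases obvious from the definition; your induction on the chain length and the verification that the pieces concatenate properly (in particular, that an intermediate horizontal segment is entered at its left endpoint and exited at its right endpoint) just make explicit what the paper leaves implicit.
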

It suffices to prove these properties when $J$ is a right-neighbor of
$I$, and  they are obvious in this case 
(see Fig.~\ref{fig:left}).

\begin{Lemma}
\label{the:covering}
In the $\tw$ order, $J$ covers $I$ if and only if $J$ is a
right-neighbor of $I$. A similar statement holds for the $\ts$ order.
\end{Lemma}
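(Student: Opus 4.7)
The plan is to prove the two implications in Lemma~\ref{the:covering} separately; the analogous statement for $\ts$ and below-neighbors will then follow by swapping the horizontal/vertical roles. The ``covering implies right-neighbor'' direction is essentially formal. By the definition of $\ew$ as the transitive closure of the right-neighbor relation, any $I \ew J$ is witnessed by a chain $I = I_0, I_1, \ldots, I_k = J$ of right-neighbor steps with $k \geq 1$. If $k \geq 2$, then $I_1$ is a strict intermediate in the $\tw$ order (it differs from both $I$ and $J$ by antisymmetry of $\tw$, which is Observation~\ref{the:par}), contradicting the assumption that $J$ covers $I$. So $k = 1$, giving $J$ as a right-neighbor of $I$.

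For the converse, assume $J$ is a right-neighbor of $I$ and suppose, for a contradiction, that some $K \notin \{I, J\}$ satisfies $I \ew K \ew J$. The proof of Observation~\ref{the:par} actually establishes the stronger statement that every interior point of $K$ has abscissa strictly greater than every interior point of $I$ and strictly less than every interior point of $J$. I would split on the orientations of $I$ and $J$. When exactly one of $I, J$ is horizontal, the interior abscissas of the vertical segment reduce to a single value $x_0$, which is simultaneously the infimum (respectively, supremum) of the interior abscissas of the horizontal segment; the two strict inequalities on $K$'s abscissas then force $x_0 < x < x_0$, a contradiction.

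The substantive case is when $I$ and $J$ are both vertical, at abscissas $x_1 < x_2$, with $A$ the unique rectangle having left side in $I$ and right side in $J$. Then the interior abscissas of $K$ lie strictly in the horizontal range $(x_1, x_2)$ of $A$, and since no segment of $P$ enters the interior of a rectangle, $K$ must lie weakly above or weakly below $A$. The main obstacle will be to combine the uniqueness of $A$ with the T-junction structure of segment meetings (a consequence of the ``no four rectangles at a point'' axiom) to exclude any chain of right-neighbors that leaves $I$, reaches such a $K$, and then continues on to $J$. I would attack this by tracing a minimal witnessing chain and analyzing its first non-trivial step, where the uniqueness of $A$ should obstruct the existence of the alternative spanning configuration required.
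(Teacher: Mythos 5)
Your first direction is fine and matches the paper's (terser) argument: a covering relation must be a one-step chain. Your handling of the mixed-orientation cases is also correct, and in fact cleaner than what the paper does there: the strict abscissa inequalities from the proof of Observation~\ref{the:par}, chained along a witnessing sequence, immediately squeeze any intermediate $K$ into an empty interval. But the proposal has a genuine gap at exactly the point where the lemma has content: the case where $I$ and $J$ are both vertical. There you establish only that $K$ lies weakly above or weakly below the unique rectangle $A$, and then you stop, writing that the ``main obstacle will be'' to exclude the chain and that you ``would attack'' it by analyzing the first step of a minimal witnessing chain. That is a plan, not a proof, and it is the substantive part of the statement --- it is the only case in which the uniqueness of $A$ and the T-junction structure actually have to be used, and it is not at all automatic that the first step of the chain yields a contradiction without a further structural tool.

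For comparison, the paper closes this case by reformulating the claim as ``the right-neighbors of any segment form an antichain'' and then invoking Observation~\ref{the:path}: the polygonal line from $I$ to the farther neighbor must pass through the nearer one, always moves rightward, and follows every horizontal segment it meets in its entirety; since it must leave $I$ below $A$ and reach the other vertical segment above $A$ (or symmetrically), it would have to cross two horizontal segments, which the description of the line forbids. If you want to complete your version, you either need to prove an analogue of that polygonal-line observation, or find a genuinely different argument exploiting the uniqueness of $A$; as written, the key case is asserted rather than proved.
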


\begin{proof}
Since $\tw$ is constructed as the transitive closure of the
left-right neighborhood relation, every covering relation is a
neighborhood relation.

Conversely, let us prove that any neighborhood
relation is a covering relation. Equivalently, this means that the
right-neighbors of any segment $I$ form an antichain. If $I$ is
horizontal, it has at
most one right-neighbor, and there is nothing to prove. Assume $I$ is
vertical (as in Fig.~\ref{fig:right-neighbors}), and that two of its
neighbors, $J_1$ and $J_2$, satisfy $J_1\ew J_2$.
By the first part of Observation~\ref{the:path}, $J_2$ cannot be horizontal (its
leftmost point would then lie on $I$, leaving no place for $J_1$). 
Hence $J_2$ is vertical. The  possible configurations of $I$ and $J_2$
are depicted in the first four cases of Fig.~\ref{fig:left}. Let $x$
(resp. $y$) be a point of $I$ (resp. $J_2$). By
Observation~\ref{the:path}, there exists a
polygonal line  from $x$ to $y$ that visits a point of $J_1$. This
rules out the third and fourth cases of Fig.~\ref{fig:left} (the line
would be reduced to a single traversing edge). By symmetry we can
assume that $I$ and $J_2$ are as in the first case of Fig.~\ref{fig:left}. Then the polygonal
line, which is not a single traversing edge, has to leave $I$ at a point that lies lower than the lowest point
of $J_2$, and to reach $J_2$ at a point that lies higher than the
highest point of $I$: this means that it crosses two horizontal
segments, which is impossible given the description of this line.
\end{proof}

\begin{Lemma}\label{com}
\label{the:seg}
Let $I$ and $J$ be two different segments in a floorplan
$P$. Then exactly one of the relations: $I \ew J$, $J \ew I$,  $I \es
J$, or $J \es I$, holds.
\end{Lemma}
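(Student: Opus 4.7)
The claim has two halves: at most one of the four relations holds (uniqueness), and at least one does (existence).

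For uniqueness, the ``parallel'' conflicts $I \ew J \wedge J \ew I$ and $I \es J \wedge J \es I$ are ruled out exactly as in the antisymmetry argument of Observation~\ref{the:par}: along any single neighbor step, interior points of the predecessor have strictly smaller abscissa (or ordinate) than interior points of the successor, so transitivity of these strict inequalities forbids a closed chain. The ``mixed'' conflicts are more delicate. Suppose for contradiction that $I \ew J$ and $I \es J$ both hold. Choose a right-chain and an up-chain from $I$ to $J$ of minimal length; by Lemma~\ref{the:covering}, each chain consists of neighbor steps. I would compare the first step of the right-chain and the first step of the up-chain at~$I$, case-analyzing on the orientation of $I$ and on the types of those two first neighbor steps (each of which falls into one of the three cases of the neighbor definition). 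The T-shape constraint on segment meetings (no $\bigplus$ and no L-junctions) is what rules out every surviving combination: in each case, the two first-step segments either coincide in a forbidden orientation mismatch at an endpoint of $I$, or force an impossible meeting configuration further along the paths. The remaining mixed conflicts, obtained by swapping $I$ and $J$ or by reflecting the floorplan across a coordinate axis, reduce to this one by symmetry.

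For existence, I would lift the analogous rectangle lemma of~\cite{abp} to segments. Every segment has at least two adjacent rectangles, one on each side. Given distinct segments $I$ and $J$, pick suitably ``extremal'' adjacent rectangles $A$ and $B$ (for instance, the topmost rectangle on the right side of $I$, if $I$ is vertical). By the rectangle version of the lemma, exactly one of $A \ew B$, $B \ew A$, $A \es B$, $B \es A$ holds. I would then translate the witnessing rectangle chain into a chain of segments: consecutive rectangles in, say, a left-neighbor rectangle chain are separated by a common vertical segment, and the sequence of these separating segments, interspersed with horizontal segments at the endpoints where the chain ``turns,'' yields a $\ew$-chain (or an analogous chain in the other three directions) from $I$ to $J$.

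The main obstacle, as I see it, is the mixed case of uniqueness. The resolution is local and geometric, but requires careful orientation-by-orientation bookkeeping at the endpoints of $I$ combined with the T-junction constraint. Existence, while technical, is more mechanical once the rectangle-to-segment translation is in place.
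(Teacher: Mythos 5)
Your decomposition into uniqueness and existence is reasonable, but both halves have genuine gaps, and the central one is in the mixed case of uniqueness. You propose to derive a contradiction from $I \ew J$ and $I \es J$ by comparing the \emph{first} steps of a minimal right-chain and a minimal up-chain at $I$ and invoking the T-junction constraint. This cannot work as a local argument: the first steps of the two chains are in general perfectly compatible (for a horizontal $I$, the right-neighbor is the vertical segment through its right endpoint, while an above-neighbor may sit anywhere along $I$; nothing clashes at $I$). The incompatibility of $I\ew J$ and $I\es J$ is a global phenomenon --- note that the monotonicity facts alone (every point of $J$ weakly to the right of every point of $I$, and also weakly above) are mutually consistent, since $J$ could simply lie to the upper right of $I$. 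The paper closes this case globally: it builds four staircase ``NE/NW/SE/SW-sequences'' emanating from $I$, which partition the floorplan into four regions, and then uses the polygonal-line description of $\ew$-chains (Observation~\ref{the:path}: the line always moves rightward and traverses in full every horizontal segment it meets) to show that such a line can never cross the bounding sequences into the North region --- whence a $J$ that is above $I$ cannot also be to its right. Your sketch never identifies this mechanism, and the phrase ``force an impossible meeting configuration further along the paths'' is exactly the part that needs a proof.

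The existence half also has a gap. Translating a rectangle left-neighbor chain into a segment $\ew$-chain is not mechanical: consecutive separating vertical segments need not be left-right neighbors (two rectangles stacked between them violates the ``exactly one rectangle'' clause, so you must route through the intervening horizontal segment), and, more seriously, the direction of the rectangle relation between your chosen extremal adjacent rectangles need not match the direction of the segment relation between $I$ and $J$ (take $I$ vertical and $J$ horizontal with its left endpoint on $I$: then $I\ew J$, but the topmost rectangle to the right of $I$ and a rectangle adjacent to $J$ may well stand in the relation ``above'' rather than ``to the left of''). So even granting the rectangle lemma, you have not shown that the relation it produces is the one you need, nor that exactly one segment relation results. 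The paper avoids all of this: the same four-region decomposition yields existence and uniqueness in one pass, by showing that a segment in (say) the North region satisfies $I\es J$ and none of the other three relations.
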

\begin{proof}
Assume without loss of generality that $I$ is a horizontal segment.
Construct the \emph{NE-sequence} $K_1, K_2, \ldots$ of $I$ as follows
(see Fig.~\ref{fig:compat} for an illustration):
$K_1$ is the right-neighbor of $I$,
$K_2$ the above-neighbor of
$K_1$, $K_3$ the right-neighbor of $K_2$,
and so on, until the boundary is reached.
Construct similarly the SE-, NW-, and SW-sequences of $I$.
These sequences partition the rectangle into four regions
(or 
fewer, if some endpoints of $I$ lie on the boundary); each
segment of $P$ (except $I$ and those belonging to either of the
sequences) lies in 
exactly one of them.
Also, if $J$ is in the interior of a region, then its neighbors are
either in the same region, or in one of the sequences that bound the region.

It is not hard to see that the vertical segments of the NE-sequence are to the
{right} of $I$, while horizontal segments are {above} $I$.
A horizontal segment  $K_{2i}$ cannot be to
the left of $I$, since it ends to the right of $I$. Let us prove that
$K_{2i}$ cannot be
the right of $I$ either. Assume this is the case, and consider the
polygonal line  going from the leftmost point
of $I$ to the rightmost point of $K_{2i}$, as described in
Observation~\ref{the:path}. The last section of this  line  is
$K_{2i}$. Hence the  line  has points in the interior of the region
comprised between the NW- and NE-sequences.
But since the  line 
always goes to the right,
and follows entirely every horizontal
segment it visits, it can never enter the interior of this region. Thus
$K_{2i}$ cannot be to the right of $I$.
Thus, its only relation to $I$ is $I \es K_{2i}$.
Similar  arguments apply for vertical segments of the NE-sequence, and for the other three sequences.

Consider now a segment $J$ that lies, for instance, in the North region
(that is
 the region bounded by the NE-sequence, the NW-sequence, and the
 boundary;
the case of other regions is similar).
Then $I$ is below $J$: if
we consider the below-neighbors of $J$, then their below-neighbors, and so
on, then we necessarily reach one of the horizontal segments of the
NW- or NE-sequence,
which, as we have seen, are above $I$ (we cannot reach a vertical
segment of the sequences without 
reaching a horizontal segment first).

Hence, we have that $I \es J$; it remains to prove that
the
other three relations are impossible.

First, $J \es I$ is impossible since the relation $\es$ is antisymmetric.
To prove that $J$ cannot be to the right of $I$, we argue as we did
for $K_{2i}$: the polygonal line from $I$ to $J$ starting from the leftmost
point of $I$ cannot enter the North region. 
Symmetrically, $J$ cannot
be to the left of $I$. This completes the proof.
\end{proof}

\begin{figure}[ht]
\begin{center}
\resizebox{50mm}{!}{\includegraphics{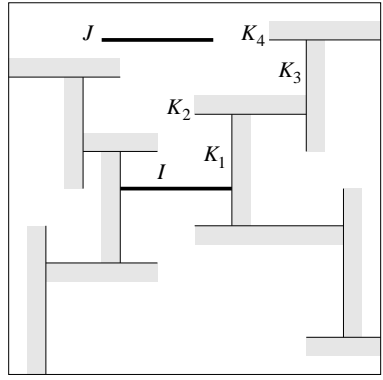}}
\caption{Four regions determining the relationship between $I$ and
  other segments of the floorplan.}
\label{fig:compat}
\end{center}
\end{figure}

\begin{Definition}
 The relations $\tsw$ and $\tnw$ between segments of a floorplan
 $P$ are defined by: 
\begin{center}
\emph{$I \tsw J$ \ \ \ if \ \ \ $I=J$, or $I$ is to the left of $J$,
  or $I$ is below $J$},\\
\emph{$I \tnw J$ \ \ \ if \ \ \ $I=J$, or $I$ is to the left of $J$,
  or $I$ is above $J$}.
\end{center}
We also write $I \esw J$ when $I \tsw J$ and $I \neq J$; and $I \enw
J$ when $I \tnw J$ and $I \neq J$.
\end{Definition}

\begin{Proposition}
\label{the:lin}
The relations $\tsw$ and $\tnw$ are linear orders.
\end{Proposition}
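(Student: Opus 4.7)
My plan is to check the four defining properties of a linear order for $\tsw$ (reflexivity, antisymmetry, totality, transitivity); the argument for $\tnw$ is entirely analogous, and reflexivity is built into the definition.

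For totality and antisymmetry, I would appeal directly to Lemma~\ref{the:seg}. Given two distinct segments $I,J$, that lemma provides \emph{exactly} one of $I\ew J$, $J\ew I$, $I\es J$, $J\es I$. Each of these four options forces either $I\esw J$ or $J\esw I$, which gives comparability. Conversely, if both $I\esw J$ and $J\esw I$ held, we would have two simultaneous relations among those four forbidden by Lemma~\ref{the:seg}, contradicting distinctness. So the only remaining work is transitivity.

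The transitivity step is the main obstacle. Suppose $I\tsw J$ and $J\tsw K$; I may assume $I,J,K$ are pairwise distinct, so each of the two relations is of type $\ew$ or $\es$. This splits into four cases. The two pure cases, ``$\ew$ then $\ew$'' and ``$\es$ then $\es$,'' follow at once from transitivity of $\tw$ and $\ts$ (Observation~\ref{the:par}). The mixed cases are where the structure is actually used. Consider, for instance, $I\ew J$ and $J\es K$. Assume for contradiction that $I\tsw K$ fails; by totality (already established), $K\esw I$, so either $K\ew I$ or $K\es I$. In the first subcase, combining $K\ew I$ with $I\ew J$ yields $K\ew J$, which together with the hypothesis $J\es K$ contradicts Lemma~\ref{the:seg}. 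In the second subcase, combining $J\es K$ with $K\es I$ yields $J\es I$, which together with $I\ew J$ again contradicts Lemma~\ref{the:seg}. The remaining mixed case ($I\es J$ and $J\ew K$) is handled by the same two-contradiction argument with the roles of the two orders swapped.

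Once these four cases are dispatched, $\tsw$ is reflexive, antisymmetric, transitive, and total, hence a linear order. The argument for $\tnw$ is obtained by everywhere exchanging the role of ``below'' and ``above'' (i.e.\ replacing $\es$ by $\ts^{-1}$), and the same case analysis goes through verbatim.
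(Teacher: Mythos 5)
Your proposal is correct and follows essentially the same route as the paper: reflexivity from the definition, antisymmetry and totality from Lemma~\ref{the:seg}, the pure cases of transitivity from transitivity of $\ew$ and $\es$, and the mixed cases by deriving a contradiction with Lemma~\ref{the:seg}. The only cosmetic difference is that you phrase the mixed case as a contradiction via totality, while the paper directly enumerates the four possible relations between $I$ and $K$ and eliminates two; the underlying argument is identical.
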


\begin{proof}
We prove the claim for the relation $\tsw$.
Reflexivity follows 
 from the definition.
Antisymmetry follows from
the fact that $\tw$ and $\ts$ are order relations,
and from Lemma~\ref{the:seg}.

For transitivity, assume that $I \esw J$ and $J \esw K$. If $I \ew J$
and $J \ew K$ (respectively, $I \es J$ and $J \es K$), then we have $I
\ew K$ (respectively, $I \es K$)
by the transitivity of $\ew$ (respectively, $\es$).
Assume now that $I \ew J$ and $J \es K$ (the case $I \es J$ and $J \ew
K$ is proven similarly). By Lemma~\ref{the:seg}, $I=K$ is impossible,
and we have either $I \ew K$, $K \ew I$, $I \es K$, or $K \es
I$. However, the combination of $K \ew I$ and $I \ew J$ yields $K \ew
J$, contradicting  the assumption that $J \es
K$ (by Lemma~\ref{com}). Similarly, combining $K \es I$ with $J \es K$ yields $J \es I$,
contradicting the assumption that $I \ew J$. Therefore, we have either
$I \ew K$ or $I \es K$; 
in particular, $I \esw K$.

Linearity follows from Lemma~\ref{the:seg}.
\end{proof}

\begin{Observation}
  The orders $\tw$ and $\ts$ can be recovered from $\tsw$ and $\tnw$.
  Indeed, $I\tw J$ if and only if  $I\tsw J$  and $I\tnw J$; moreover,
  $I\ts J$ if and only if  $I\tsw J$  and   $J\tnw I$.
\end{Observation}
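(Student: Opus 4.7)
The plan is to prove both equivalences by straightforward case analysis, with Lemma~\ref{the:seg} (trichotomy of segment relations) doing all the work. Throughout, I will use the unpacked definitions
\[
I\tsw J \iff I=J \text{ or } I\ew J \text{ or } I\es J,
\qquad
I\tnw J \iff I=J \text{ or } I\ew J \text{ or } J\es I.
\]

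For the first equivalence, the forward direction is immediate: if $I\tw J$ then either $I=J$ or $I\ew J$, and in either case both $I\tsw J$ and $I\tnw J$ follow from the definitions. For the converse, I would assume $I\tsw J$ and $I\tnw J$ and dispose of the case $I=J$ trivially; otherwise, each hypothesis yields two possibilities, giving four combinations to check. Three of these combinations ($I\ew J$ paired with $J\es I$; $I\es J$ paired with $I\ew J$; $I\es J$ paired with $J\es I$) are ruled out directly by Lemma~\ref{the:seg}, which guarantees that at most one of the four relations $I\ew J$, $J\ew I$, $I\es J$, $J\es I$ can hold. The only surviving case is $I\ew J$, giving $I\tw J$.

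The second equivalence is handled symmetrically. The forward direction: if $I\ts J$ with $I\ne J$, then $I\es J$, hence $I\tsw J$ (second disjunct) and $J\tnw I$ (third disjunct, since $I\es J$ is exactly ``$J$ is above $I$'' read from $J$'s side). For the converse, again assume $I\ne J$; then $I\tsw J$ gives $I\ew J$ or $I\es J$, and $J\tnw I$ gives $J\ew I$ or $I\es J$. Exactly as above, Lemma~\ref{the:seg} eliminates the three combinations involving $J\ew I$ or mixing $I\ew J$ with $I\es J$, leaving only $I\es J$, whence $I\ts J$.

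There is no real obstacle here: the whole argument is a four-way case check and the only nontrivial input is the trichotomy from Lemma~\ref{the:seg}. The one thing to be careful about is bookkeeping the directions of $\tnw$ (note the swap of $I$ and $J$ in the second statement), which is why I unpack the definitions at the outset.
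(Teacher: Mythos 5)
Your proof is correct and is precisely the argument the paper intends: the Observation is stated without proof, and the only substantive input is the trichotomy of Lemma~\ref{the:seg}, which you apply exactly as needed to eliminate the incompatible cases. The unpacking of $\tnw$ (with the $I$/$J$ swap in the second equivalence) is handled correctly.
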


\emph{Throughout the paper, the $i$th segment
in the  $\tnw$~order ($1 \leq i \leq n$) will be denoted by $I_i$.}
See Fig.~\ref{fig:s_equivalence} for examples.

We now explain how to determine $I_{i+1}$ among the neighbors of $I_i$.
By Lemma~\ref{the:covering}, 
$I_{i+1}$ is either a right- or below-neighbor of $I_i$.
There are several cases depending on the existence of 
these neighbors
and the relations between them. 
For a horizontal segment $I$,
we denote by $\R(I)$ the right-neighbor of $I$ (when it exists).
By Lemma~\ref{the:covering},
the below-neighbors of $I$ form an antichain of the $\ts$ order.
Since $\tnw$ is a linear order, they are
totally ordered for the $\tw$ order.
By the first part of Observation~\ref{the:path},
the leftmost is also the smallest, denoted $\LB(I)$.
Thus $\LB(I)$ is either
$\LVB(I)$ (the leftmost \emph{vertical} below-neighbor of $I$)
or
$\LHB(I)$ (the leftmost \emph{horizontal} below-neighbor of $I$).
Similarly, for a vertical segment $I$, we denote
by $\B(I)$ the below-neighbor of $I$;
by $\UR(I)$ the highest\footnote{The letter U stands for \emm up,.}
 right-neighbor of $I$,
and by $\UHR(I)$ (respectively, $\UVR(I)$)
the highest horizontal (respectively, vertical)
right-neighbor of $I$.
 Fig.~\ref{fig:next_se}  illustrates the following observation
(it is assumed that
all 
candidates for
$I_{i+1}$ are 
 depicted.
The dashed lines belong to the boundary).

\begin{Observation}\label{se_nei}
\label{th:se_nei}
Let $I_i$ be a segment in a floorplan $P$
of size $n+1$.
If $I_i$ is horizontal, then $I_{i+1}$ is either $\R(I_i)$ or $\LB(I_i)$.
More precisely,
\begin{enumerate}
\item If none of  $\R(I_i)$ and $\LB(I_i)$ exists, then
     $I_i$ is the last segment in the $\tnw$~order (that is, $i=n$).
\item If exactly one of
     $\R(I_i)$  and $\LB(I_i)$ exists, then $I_{i+1}$ is this segment.
\item
If $\LVB(I_i)$  exists,
then $I_{i+1}=\LB(I_i)$. This segment is
  $\LHB(I_i)$ if it exists, and otherwise $\LVB(I_i)$.
  \item If $\LVB(I_i)$ does not exist but $\LHB(I_i)$ and $\R(I_i)$ exist, then
  \begin{itemize}
    \item If the join of $\LHB(I_i)$ and $\R(I_i)$ is of type $\leftvdash$, then $I_{i+1}=\LHB(I_i)$.
    \item If the join of $\LHB(I_i)$ and $\R(I_i)$ is of type $\upvdash$, then $I_{i+1}=\R(I_i)$.
  \end{itemize}
 \end{enumerate}

If $I_i$ is vertical, then $I_{i+1}$ is either $\B(I_i)$, $\UHR(I_i)$, or $\UVR(I_i)$ (the details are similar to those in the case of a horizontal segment).
\end{Observation}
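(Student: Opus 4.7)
The plan is a two-step reduction followed by a geometric case analysis. First, I would show that $I_{i+1}$ must be either a right-neighbor or a below-neighbor of $I_i$. Since $\tnw$ is a linear order and $I_i \enw I_{i+1}$, the definition of $\tnw$ forces $I_i \ew I_{i+1}$ or $I_{i+1} \es I_i$; if $I_{i+1}$ did not cover $I_i$ in the corresponding poset, Lemma~\ref{the:covering} would interpose an intermediate $K$ with $I_i \enw K \enw I_{i+1}$, contradicting succession. For horizontal $I_i$, the right-neighbor is unique (namely $\R(I_i)$), and the below-neighbors form an antichain in $\ts$ (since each is covered by $I_i$ in $\ts$), hence a $\tw$-chain by Lemma~\ref{the:seg}, whose $\tw$-minimum is the leftmost one, $\LB(I_i)$. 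So $I_{i+1} \in \{\R(I_i), \LB(I_i)\}$ whenever these exist.

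Cases~(1) and~(2) then follow immediately: if neither candidate exists, $I_i$ has no $\tnw$-successor and $i = n$; if exactly one exists, it is forced to be $I_{i+1}$. The geometric input for Cases~(3) and~(4) comes from the sequence $A_1,\ldots,A_m$ of rectangles directly below $I_i$, read from left to right: the verticals separating consecutive $A_k$'s are exactly the vertical below-neighbors of $I_i$; the right side of $A_m$ lies on $\R(I_i)$; and if $\LHB(I_i)$ exists, the bottom of $A_1$ lies on it.

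For Case~(3), $\LVB(I_i)$ exists, so $m \geq 2$. When $\LHB(I_i)$ also exists, the T-junction at the bottom-right corner of $A_1$ must be of type $\leftvdash$; the alternative $\upvdash$ would make $\LVB$ an above-neighbor of $\LHB$, yielding $\LHB \es \LVB$ and contradicting the antichain property of below-neighbors of $I_i$ in $\ts$. Hence $\LHB \ew \LVB$, and $\LB = \LHB$ when $\LHB$ exists, else $\LB = \LVB$. Next, consecutive vertical below-neighbors satisfy $V_k \ew V_{k+1}$---directly via the unique $A_{k+1}$ if no other rectangle lies between them, and otherwise by chaining through the horizontal segment separating two vertically stacked rectangles (its left endpoint lies on $V_k$, its right endpoint on $V_{k+1}$). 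The same argument applied between $V_{m-1}$ and $\R(I_i)$ gives $V_{m-1} \ew \R(I_i)$. Combining yields $\LB \tw V_1 \tw V_{m-1} \ew \R(I_i)$, so $\LB \enw \R(I_i)$ and $I_{i+1} = \LB$.

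In Case~(4), $\LVB$ does not exist, so $m = 1$: the strip directly below $I_i$ is a single rectangle $A$ with top on $I_i$, right on $\R(I_i)$, and bottom on $\LHB(I_i)$. The T-junction at $A$'s bottom-right corner decides the outcome: if of type $\leftvdash$, the right endpoint of $\LHB$ lies on $\R(I_i)$, giving $\LHB \ew \R(I_i)$ and $I_{i+1} = \LHB$; if of type $\upvdash$, the bottom endpoint of $\R(I_i)$ lies on $\LHB$, making $\R(I_i)$ an above-neighbor of $\LHB$, so $\R(I_i) \enw \LHB$ and $I_{i+1} = \R(I_i)$. The vertical-$I_i$ case is formally analogous by reflection. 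I expect the main obstacle to lie in Case~(3): correctly identifying $\LB$ with $\LHB$ via the antichain argument, and handling the uniqueness requirement in the segment-level left-neighbor definition when several rectangles stack vertically between $V_{m-1}$ and $\R(I_i)$, which forces one to close the $\tw$-chain through an intermediate horizontal segment rather than via a single direct neighborhood.
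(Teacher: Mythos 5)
Your overall strategy coincides with the paper's: the statement is given as an Observation, and the paper justifies it only by the paragraph that precedes it (which contains exactly your first reduction --- $I_{i+1}$ must cover $I_i$ in $\tw$ or $\ts$, hence be a right- or below-neighbour by Lemma~\ref{the:covering}, and the below-neighbours form a $\ts$-antichain, hence a $\tw$-chain whose minimum is the leftmost element $\LB(I_i)$) together with Fig.~\ref{fig:next_se}. Your Cases (1), (2) and (4) are fine, and so is your argument that $\R(I_i)$ cannot be the $\tnw$-successor once $\LVB(I_i)$ exists, via the chain $\LVB(I_i)=V_1\ew\cdots\ew V_{m-1}\ew\R(I_i)$; this already gives the essential conclusion $I_{i+1}=\LB(I_i)$ in Case~(3).

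There is, however, a genuine gap in the remainder of Case~(3), at the step ``if $\LHB(I_i)$ exists, the bottom of $A_1$ lies on it.'' The definition of a neighbour between two \emph{parallel} segments requires that \emph{exactly one} rectangle lie between them. If the join at the bottom-right corner of $A_1$ is of type $\upvdash$, then $A_1$ and $A_2$ have their lower sides contained in the same horizontal segment $H$, and $H$ is then \emph{not} a below-neighbour of $I_i$ at all; your dichotomy (``$\upvdash$ would put $\LHB$ below $\LVB$ and violate the antichain property'') does not apply, because $H$ need not belong to the antichain. Concretely, take three rectangles $A_1,A_2,A_3$ directly below $I_i$, with $A_1$ and $A_2$ sharing their bottom segment $H$ (so $V_1$ ends on $H$ with a $\upvdash$ join) and $A_3$ having its own bottom segment $H'$ whose left endpoint lies on $V_2$. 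The below-neighbours of $I_i$ are then $V_1, V_2, H'$, with $V_1\ew V_2\ew H'$; hence $\LHB(I_i)=H'$ exists, yet $\LB(I_i)=V_1=\LVB(I_i)$. So the sub-claim ``$\LB(I_i)=\LHB(I_i)$ whenever $\LHB(I_i)$ exists'' fails --- note that this is also what item~(3) of the Observation literally asserts, so the imprecision is partly inherited from the statement (and from the particular configurations depicted in Fig.~\ref{fig:next_se}). What is true, and what your T-junction argument does prove, is that $\LB(I_i)$ equals the segment carrying the lower side of $A_1$ when that segment is a below-neighbour of $I_i$, i.e.\ when the join at the bottom-right corner of $A_1$ is of type $\leftvdash$; otherwise $\LB(I_i)=\LVB(I_i)$ even though a (more distant) leftmost horizontal below-neighbour may exist.
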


\begin{figure}[h]
\begin{center}
\resizebox{165mm}{!}{\includegraphics{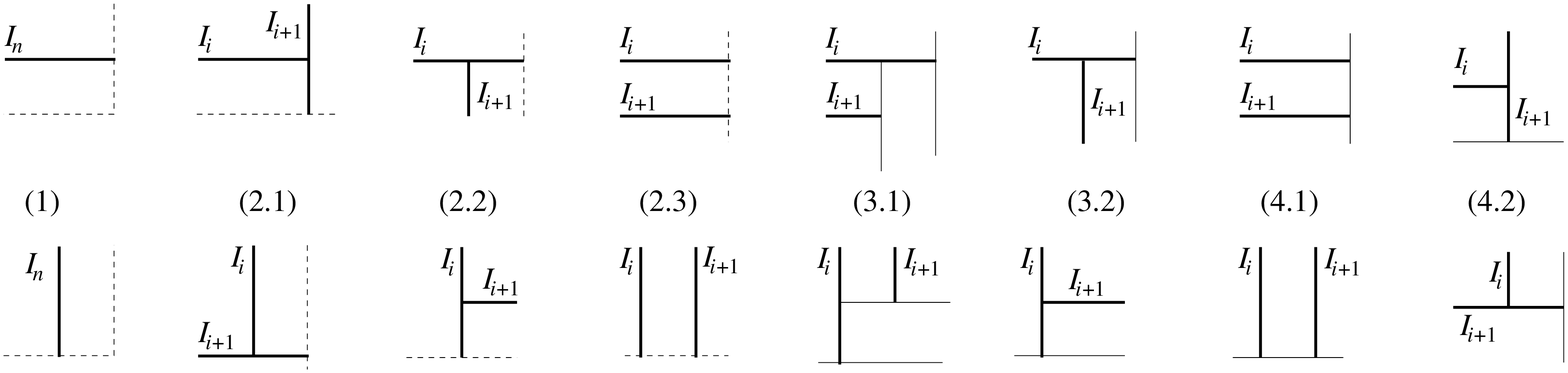}}
\caption{The segment $I_{i+1}$ follows
$I_i$ in  the $\tnw$~order
(Top: $I_i$ is horizontal. Bottom: $I_i$ is vertical).}
\label{fig:next_se}
\end{center}
\end{figure}

One can in fact construct
simultaneously, and in a single pass, the labeling of rectangles and
segments.

\begin{Proposition}
\label{the:r2s_1}
Let $P$ be a floorplan of size $n+1$, and let $A_k$ denote the rectangle labeled $k$ in the $\tnw$~order.
For 
$1 \leq k \leq n$, the following property, illustrated in Fig.~\ref{fig:r2s_1},
holds:
\begin{itemize}
  \item If the segments forming the SE-corner of $A_k$ have a $\upvdash$ join,
  let $J_k$ be the segment containing the right side of $A_k$.
  Then $A_{k+1}$ is the highest
rectangle whose left side is contained in $J_k$.
  \item If the segments forming the SE-corner of $A_k$ have a $\leftvdash$ join,
  let $J_k$ be the segment  containing the lower side of $A_k$.
  Then $A_{k+1}$ is the leftmost 
rectangle whose upper side is contained in $J_k$.
\end{itemize}
In both cases,  $J_k$ is the $k$\emph{th} segment in the $\tnw$~order
of segments, denoted so far by $I_k$.
\end{Proposition}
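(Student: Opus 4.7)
The plan is to prove the proposition by induction on $k$, simultaneously establishing the two claims: that the prescription for $A_{k+1}$ correctly produces the $\tnw$-successor of $A_k$, and that $J_k = I_k$.

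For the base case $k=1$, the rectangle $A_1$ lies in the upper-left corner of the floorplan, so its top and left sides lie on the outer boundary, while its bottom and right sides belong to segments of $P$ that meet at the SE-corner with a $\upvdash$ or $\leftvdash$ junction. In the $\upvdash$ case, let $V$ be the vertical segment containing the right side of $A_1$. Since $V$ reaches the top boundary, every other vertical segment is either strictly to its right or strictly below its bottom endpoint, while every horizontal segment lies below $V$ (because $A_1$ is at the top of the floorplan). Hence $V \enw J$ for every other segment $J$, so $V = I_1 = J_1$. The case $\leftvdash$ is symmetric. One then identifies $A_2$ as the $\tnw$-successor of $A_1$ by inspecting the candidates allowed by the SE-junction type and using the linearity of $\tnw$.

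For the inductive step, fix $k \geq 2$ and assume both claims hold for indices strictly less than $k$. By the induction hypothesis, $I_{k-1}=J_{k-1}$ is the segment flanking $A_{k-1}$ as described by the proposition, and $A_k$ is the appropriately extremal rectangle on that side. To identify $I_k$, I invoke Observation~\ref{se_nei} applied to $I_{k-1}$: this expresses $I_k$ as a specific neighbor of $I_{k-1}$ according to which of $\R(I_{k-1})$, $\LHB(I_{k-1})$, $\LVB(I_{k-1})$ (or their analogs when $I_{k-1}$ is vertical) exist, and according to the type of join they form. In parallel, the SE-corner of $A_k$ determines $J_k$ directly. A case-by-case comparison shows that the two constructions single out the same segment, and simultaneously confirms that $A_{k+1}$, as defined by the rule of the proposition, is the $\tnw$-successor of $A_k$.

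The main obstacle is the bookkeeping of this inductive case analysis. Two orthogonal case-splits must be combined: the SE-corner type of $A_{k-1}$ (which determines whether $I_{k-1}$ is vertical or horizontal, and hence where $A_k$ sits relative to $I_{k-1}$) and the sub-cases of Observation~\ref{se_nei} (existence and join-types among $\R$, $\LHB$, $\LVB$). Each resulting combination yields a concrete local picture near $A_k$, from which one reads off both $I_k$ (via Observation~\ref{se_nei}) and $J_k$ (via the SE-corner of $A_k$) and checks they coincide. Although each individual case is elementary, this verification is the heart of the argument, as it is where the geometric coherence between the rectangle and segment $\tnw$-labelings is actually established.
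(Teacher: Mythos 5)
Your proposal follows essentially the same route as the paper: the identity $J_k=I_k$ is proved by induction on $k$, with the base case $k=1$ checked directly and the inductive step reduced to a case analysis matching the successor rules of Observation~\ref{th:se_nei} against the join type at the SE-corner of the current rectangle. The only structural difference is that the paper establishes the rule identifying $A_{k+1}$ by a short direct (non-inductive) argument---$A_{k+1}$ covers $A_k$ in the $\tnw$ order, hence is a right- or below-neighbor, and the SE-corner join type decides which of these two families entirely precedes the other---rather than folding that verification into the induction; neither version writes out every case in full, and your outline is sound.
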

\begin{proof}
By definition of the $\tnw$~order,
$A_{k+1}$ is either a right-neighbor or a below-neighbor of $A_k$.
If there is a $\upvdash$ join in the SE-corner of $A_k$, then all the right-neighbors of $A_k$ are above all its below-neighbors. Therefore, $A_{k+1}$ is the topmost among them.
If there is a $\leftvdash$ join in the SE-corner of $A_k$, then all
the below-neighbors of $A_k$ are to the left of all its
right-neighbors. Therefore, $A_{k+1}$ is the leftmost among them.

\begin{figure}[ht]
\begin{center}
\resizebox{90mm}{!}{\includegraphics{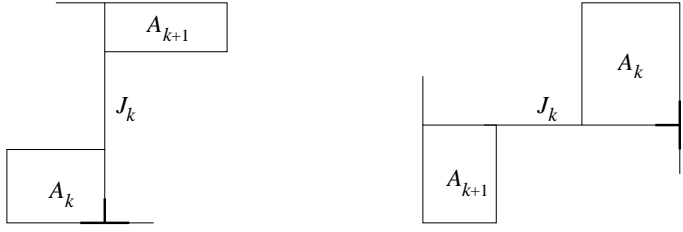}}
\caption{The rectangle $A_{k+1}$ follows
$A_k$ in the $\tnw$~order.}
\label{fig:r2s_1}
\end{center}
\end{figure}

To prove the second statement, we observe it directly for $k=1$, and
proceed by induction.
One has to examine several cases, depending on whether the segments in the SE-corners of $A_k$ and of $A_{k+1}$  have $\upvdash$ or $\leftvdash$
joins.
In all cases, $J_{k+1}$ is found to be
the immediate successor of $J_k$ in the
$\tnw$~order, as described in Observation~\ref{th:se_nei}.
See Fig.~\ref{fig:r2s_2} for several typical situations.
\end{proof}

\begin{figure}[ht]
\begin{center}
\resizebox{130mm}{!}{\includegraphics{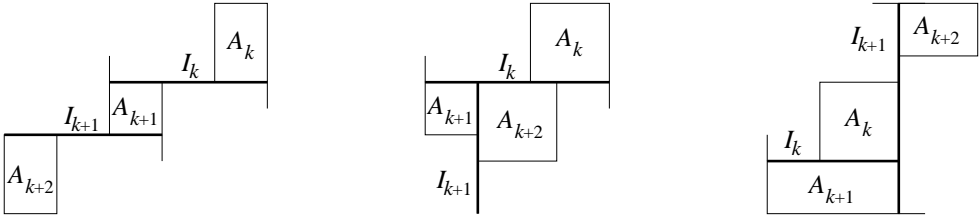}}
\caption{Successors of segments and rectangles for the $\tnw$~orders.}
\label{fig:r2s_2}
\end{center}
\end{figure}

The group of symmetries of the square acts on floorplans
(when floorplans are drawn in a square).
It is thus worth
examining how the orders are transformed when applying such
symmetries. As this symmetry group is
generated by two generators, for instance the reflections
in the
first diagonal and across a horizontal line, it suffices to study
these two transformations. The following proposition easily follows
from the description of the neighborhood relations of Fig.~\ref{fig:left}.

\begin{Proposition}
\label{obs:sym-orders}
Let $P$ be a (square) floorplan, and let $P'$ be obtained by reflecting
$P$ 
in the first diagonal. If $I$ is a segment of $P$, let $I'$
denote the corresponding segment of $P'$.
Then
$$
\begin{array}{lll}
I\tw J &\Leftrightarrow& I'\ts J', \\
I\ts J &\Leftrightarrow &I'\tw J', \\
I\tsw J &\Leftrightarrow& I'\tsw J' , \\
I\tnw J &\Leftrightarrow& J'\tnw I'. \\
\end{array}
$$
If instead $P'$ is obtained by reflecting $P$
in a horizontal line,
$$
\begin{array}{lll}
I\tw J &\Leftrightarrow& I'\tw J', \\
I\ts J &\Leftrightarrow &J'\ts I', \\
I\tsw J &\Leftrightarrow& I'\tnw J', \\
I\tnw J &\Leftrightarrow& I'\tsw J'. \\
\end{array}
$$
\end{Proposition}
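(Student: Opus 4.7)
The plan is to reduce everything to covering (i.e., neighborhood) relations, and then propagate through the transitive closures that define $\tw$ and $\ts$. First I would observe that each of the two reflections sends a floorplan to a floorplan, and induces a bijection on the set of segments. Moreover, the diagonal reflection swaps the role of ``horizontal'' and ``vertical'' segments, while exchanging the axes (so ``left'' and ``below'' get swapped, and the two corner-join types $\leftvdash$ and $\upvdash$ of Observation~\ref{th:se_nei} get swapped). The horizontal reflection preserves orientation of segments but swaps ``above'' and ``below''.

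Next I would check the claim at the level of neighborhood relations, by going through the three cases of Fig.~\ref{fig:left} (two vertical segments with exactly one rectangle between them; vertical $I$ with horizontal $J$ attached by its left endpoint; horizontal $I$ with vertical $J$ attached by its right endpoint). For the first diagonal reflection, each of these three configurations is mapped onto one of the three symmetric configurations defining ``below-neighbor''. Hence $I$ is a left-neighbor of $J$ in $P$ if and only if $I'$ is a below-neighbor of $J'$ in $P'$, and the symmetric statement also holds. For the horizontal reflection, the same case check shows that the left-neighbor relation is preserved while the below-neighbor relation is reversed.

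I would then take transitive closures: since $\tw$ is the reflexive-transitive closure of the left-neighbor relation and $\ts$ of the below-neighbor relation (by definition, and by Lemma~\ref{the:covering}), the equivalences
\[
I\tw J \Leftrightarrow I'\ts J' \text{ and } I\ts J \Leftrightarrow I'\tw J'
\]
follow for the diagonal reflection, and similarly
\[
I\tw J \Leftrightarrow I'\tw J' \text{ and } I\ts J \Leftrightarrow J'\ts I'
\]
follow for the horizontal reflection. This step is essentially automatic, since transitive closure is compatible with any relation-preserving (or relation-reversing) bijection.

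Finally, I would deduce the $\tsw$ and $\tnw$ equivalences from the two defining disjunctions. For instance, in the diagonal case, $I\tsw J$ means $I=J$, $I\ew J$, or $I\es J$; applying the two equivalences just established, this is equivalent to $I'=J'$, $I'\es J'$, or $I'\ew J'$, which is exactly $I'\tsw J'$. The identity $I\tnw J\Leftrightarrow J'\tnw I'$ is obtained in the same way (note the swap: ``left or above'' is sent by the diagonal reflection to ``below or right'', which after exchanging the roles of $I'$ and $J'$ becomes ``left or above''). The horizontal case is analogous. There is no real obstacle; the only point that requires a bit of care is bookkeeping in the last step to see why $\tnw$ becomes $\tnw$ with $I'$ and $J'$ swapped under the diagonal reflection, while $\tsw$ and $\tnw$ get exchanged (with the same order) under the horizontal reflection.
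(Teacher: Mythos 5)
Your proposal is correct and follows exactly the route the paper intends: the paper's entire ``proof'' is the remark that the proposition follows from the description of the neighborhood relations in Fig.~\ref{fig:left}, and you simply spell out that case check, the passage to transitive closures, and the bookkeeping for $\tsw$ and $\tnw$. (A minor note: the appeal to Lemma~\ref{the:covering} is unnecessary, since $\tw$ and $\ts$ are by definition the reflexive--transitive closures of the respective neighborhood relations.)
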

One consequence of this proposition is that  a
half-turn rotation of $P$
reverses all four orders.
We shall also use the fact that,  if $P'$ is obtained by applying a
clockwise quarter-turn rotation to $P$, then $I\tnw J \Leftrightarrow
J'\tsw I'$.

%%%%%%%%%%%%%%%%%%%%%%%%%%%%%%%%%%%%%%%%%%%%%%%%%%%%%%%%%%%%%%%%%%%%
\section{A bijection between S-equivalence classes of floorplans
and \\ \boldmath{$(2\mn14\mn3, 3\mn41\mn2)$}-avoiding permutations}
\label{sec:bij}
%%%%%%%%%%%%%%%%%%%%%%%%%%%%%%%%%%%%%%%%%%%%%%%%%%%%%%%%%%%%%%%%%%%%
In this section we define S-equivalence of floorplans
and construct
a map $S$ from floorplans to
permutations.
We show that $S$ 
induces an injection from S-equivalence classes 
 to permutations. We then 
characterize the class of
permutations obtained from floorplans 
in terms of (generalized) patterns.

%%%%%%%%%%%%%%%%%%%%%%%%%%%%%%%%%%%%%%%%%%%%%%%%%%%%%%%%%%%%%%%%%%%%%%
\subsection{S-equivalence}
\label{sec:s_eq}
%%%%%%%%%%%%%%%%%%%%%%%%%%%%%%%%%%%%%%%%%%%%%%%%%%%%%%%%%%%%%%%%%%%%%%

\begin{Definition}
Two floorplans $P_1$ and $P_2$ of size $n+1$ are {\emph{S-equivalent}}
if it is possible to label the segments of $P_1$ by $I_1,
I_2, \dots, I_n$ and the segments of $P_2$ by $J_1, J_2, \dots, J_n$
so that for all $k, m \in [n]$, the segments $I_k$ and $I_m$
exhibit 
 the same neighborhood relation as $J_k$ and $J_m$.
\end{Definition}

Fig.~\ref{fig:s_equivalence} shows two S-equivalent floorplans:
in both cases, the left-right \emph{neighborhood} relations are
$1 \ew 4, \ 2 \ew 4, \ 3 \ew 4, \ 4 \ew 5, \ 4 \ew 6$, and the below-above \emph{neighborhood}
relations are $2 \es 1, \ 3 \es 2, \ 6 \es 5$.
These floorplans are not R-equivalent, as can be seen by constructing
their R-permutations.
We will prove  in Section~\ref{sec:seg_rec} that, conversely, R-equivalence
implies S-equivalence.

\begin{figure}[h]
\begin{center}
\resizebox{100mm}{!}{\includegraphics{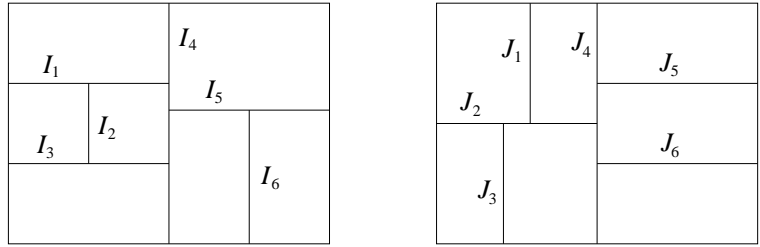}}
\caption{Two S-equivalent (but not R-equivalent)
floorplans.}
\label{fig:s_equivalence}
\end{center}
\end{figure}

%%%%%%%%%%%%%%%%%%%%%%%%%%%%%%%%%%%%%%%%%%%%%%%%%%%%%%%%%%%%%%%%%%%%%%
\subsection{S-permutations}
\label{sec:perm}
%%%%%%%%%%%%%%%%%%%%%%%%%%%%%%%%%%%%%%%%%%%%%%%%%%%%%%%%%%%%%%%%%%%%%%

Let $P$ be a floorplan of size $n+1$.
There are $n$ segments in $P$.
Let $S(P)$ be the sequence $b_1, b_2, \dots, b_n$, where $b_i$ is the
label in the $\tsw$~order of the segment labeled $i$ in the $\tnw$~order,
for all $1 \leq i \leq n$.
Then $S(P)$ is a permutation of $[n]$; 
we call it the \emph{S-permutation} of $P$ and denote it by $S(P)$.
Equivalently, if $I_1, \ldots, I_n$ is the list of segments in the
$\tnw$ order, then $I_{\sigma^{-1}(1)}, \ldots, I_{\sigma^{-1}(n)}$ is
the list of segments in the $\tsw$ order,
with $\si=S(P)$.
Since the $\tnw$- and $\tsw$-orders on segments can be determined in
linear time (Proposition~\ref{the:r2s_1}), the S-permutation is also
constructed in linear time.
An example is shown in Fig.~\ref{fig:ex1}.

Thus, we assign a permutation to a floorplan in a way
similar to that used in~\cite{abp}, but this time we use
order
 relations between segments rather than rectangles.
Note that $S(P)$ is a permutation of $[n]$, while $R(P)$ is a permutation of $[n+1]$.

By  definition of $S(P)$, if a segment of $P$ is labeled $i$ in the
$\tnw$~order and $j$ in the $\tsw$~order, then $S(P)(i)=j$. In other
words, the graph of $S(P)$ contains the point $(i,j)$, which will be
denoted by $N_i$.

\begin{figure}[ht]
\begin{center}
\resizebox{120mm}{!}{\includegraphics{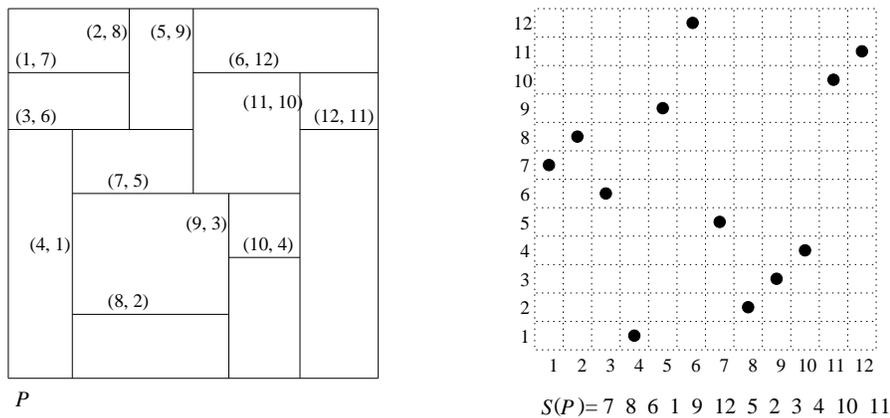}}
\caption{A floorplan $P$, with segments labeled $(i, j)$, where $i$ (respectively, $j$) is the label according to the $\tnw$ (respectively, $\tsw$) order,
and the corresponding S-permutation.}
\label{fig:ex1}
\end{center}
\end{figure}

It follows from Proposition~\ref{obs:sym-orders} that the map $S$ has
a simple behavior with respect to symmetries.
\begin{Proposition}
\label{obs:sym-S}
  Let $P$ be a (square) floorplan, and $P'$ be obtained by reflecting
$P$
in the first diagonal. Let $\si=S(P)$ and  $\si'=S(P')$. Then
  $\si'$ is obtained by reading $\si$ from right to left or
  equivalently, by reflecting the graph of $\si$ 
in a vertical  line.

If instead $P'$ is obtained by reflecting $P$ 
in a horizontal
line, then $\si'=\si^{-1}$. Equivalently, $\si'$
 is obtained  by reflecting the graph of $\si$ 
in the first diagonal.
\end{Proposition}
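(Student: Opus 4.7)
The plan is to derive the proposition directly from Proposition~\ref{obs:sym-orders}, which already records how the four orders transform under the two generating symmetries. The key is simply to translate each transformation rule for $\tsw$ and $\tnw$ into a statement about labelings, and then read off $\sigma'$ from the definition of the S-permutation. No new combinatorial argument on floorplans is needed.

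Concretely, write $\sigma=S(P)$, so that $I_i$ (the $i$-th segment of $P$ in the $\tnw$-order) is the $\sigma(i)$-th segment of $P$ in the $\tsw$-order. For the reflection in the first diagonal, Proposition~\ref{obs:sym-orders} says $\tsw$ is preserved while $\tnw$ is reversed. Hence the $\tsw$-rank of $I'_i$ in $P'$ equals the $\tsw$-rank of $I_i$ in $P$, namely $\sigma(i)$, whereas the $\tnw$-rank of $I'_i$ in $P'$ equals $n+1-i$. Thus the $k$-th segment of $P'$ in the $\tnw$-order is $I'_{n+1-k}$, whose $\tsw$-rank is $\sigma(n+1-k)$. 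By definition, $\sigma'(k)=\sigma(n+1-k)$, which is exactly $\sigma$ read from right to left, i.e.\ the reflection of its graph in a vertical line.

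For the reflection in a horizontal line, Proposition~\ref{obs:sym-orders} says the roles of $\tsw$ and $\tnw$ are swapped. Therefore the $\tnw$-rank of $I'_i$ in $P'$ equals the $\tsw$-rank of $I_i$ in $P$, namely $\sigma(i)$, and the $\tsw$-rank of $I'_i$ in $P'$ equals $i$. So if $\sigma(i)=j$, then $I'_i$ is the $j$-th segment of $P'$ in the $\tnw$-order and the $i$-th in the $\tsw$-order, which means $\sigma'(j)=i$, i.e.\ $\sigma'=\sigma^{-1}$. Reflecting the graph of $\sigma$ in the first diagonal yields the graph of $\sigma^{-1}$, which is the geometric statement.

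There is essentially no obstacle: the content is packaged entirely in Proposition~\ref{obs:sym-orders}, and the remaining work is a bookkeeping exercise tracking ranks under the two symmetries. The only mild care required is to distinguish clearly between a segment's position in one linear order and its position in the other, so that $\sigma$, $\sigma^{-1}$ and the reversal are not confused; drawing a small example for each symmetry makes the assignment of indices transparent.
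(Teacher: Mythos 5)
Your proposal is correct and follows essentially the same route as the paper: both derive the statement directly from Proposition~\ref{obs:sym-orders} by tracking how the $\tnw$- and $\tsw$-labels of a segment transform under the two generating reflections and then reading off $\sigma'$ from the definition of the S-permutation. The bookkeeping of ranks ($\sigma'(k)=\sigma(n+1-k)$ in the diagonal case, $\sigma'(j)=i$ when $\sigma(i)=j$ in the horizontal case) matches the paper's chain of equivalences.
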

\begin{proof}
The following statements are equivalent:
\begin{itemize}
\item $\si(i)=j$,
\item there exists a segment of $P$ that has label $i$ in the
  $\tnw$-order and $j$ in the $\tsw$-order (by definition of $S$),
\item there exists a segment of $P'$ that has label $n+1-i$ in the
  $\tnw$-order and $j$ in the $\tsw$-order (by Proposition~\ref{obs:sym-orders}),
\item $\si'(n+1-i)=j$.
\end{itemize}
This proves the first result. The proof of the second result is similar.
\end{proof}
Since the two reflections of Proposition~\ref{obs:sym-S} generate the group of
symmetries of the square, we can describe what happens for the other elements of
this group: applying a rotation to $P$ boils down to applying the
same rotation to $S(P)$, and
reflecting $P$ 
in $\Delta$, a symmetry axis of the bounding square,
boils down to
reflecting $S(P)$ 
in $\Delta'$, a line
obtained by rotating $\Delta$ of  $45^\circ$ in counterclockwise direction.
These properties will be extremely useful to decrease the number of
cases we have to study in certain proofs.

%%%%%%%%%%%%%%%%%%%%%%%%%%%%%%%%%%%%%%%%%%%%%%%%%%%%%%%

\medskip

We will now prove that $S(P)$ characterizes the S-equivalence class of
$P$. Clearly, two S-equivalent floorplans give rise to the same
orders, and thus to the same S-permutation.
Conversely, let us define neighborhood relations between points in the graph of a
permutation $\si$ as follows.
Let $N_{i} = (i, \sigma(i))$, $N_{j} = (j, \sigma(j))$ be two points
in the graph of $\sigma$.
If $i<j$ and $\sigma(i)<\sigma(j)$, then the point $N_{j}$ is  \emph{to
  the NE} of the point $N_{i}$.
If, in addition, there is no $i'$ such that $i<i'<j$ and
$\sigma(i)<\sigma(i')<\sigma(j)$, then 
$N_{j}$ is a  \emph{NE-neighbor} of 
$N_{i}$.
In a similar way we define when  $N_{j}$ is \emph{to the SE /
  SW / NW} of  $N_{i}$,
and when the point $N_{j}$ is
 a \emph{SE- / SW- / NW-neighbor} of  $N_{i}$.
For example, in the graph 
of Fig.~\ref{fig:ex1}, the points
$(1,7)$, $(2, 8)$, $(3,6)$, $(5,9)$ and $(6,12)$ are to the NW of
$N_7=(7,5)$; among them, $(3,6)$, $(5,9)$ and $(6,12)$ are
NW-neighbors of $N_7$.

The neighborhood relations between segments of $P$ correspond to the
neighborhood relations in the graph of $S(P)$ in the following way.

\begin{Observation}
\label{the:nei_dia}
Let $P$ be a floorplan, and let $I_{i}$ and $I_{j}$ be two
segments in $P$.

The segment $I_{j}$ is to the right of $I_i$ if and only if the point
$N_{j}$ lies to the  NE of $N_i$. Consequently, $I_{j}$ is a
right-neighbor  of $I_i$ if and only if $N_{j}$ is a NE-neighbor of $N_i$.

Similar statements hold for the other directions: left (respectively, above,
below) neighbors in segments correspond to SW- (respectively, NW-,
SE-) neighbors in points.
\end{Observation}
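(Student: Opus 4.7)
The plan is to unravel everything at the level of labels. By construction of $S(P)$, the segment $I_k$ carries $\tnw$-label $k$ and $\tsw$-label $\sigma(k)$, where $\sigma=S(P)$, so $N_k=(k,\sigma(k))$. I would first prove the statement for the ``to the right of'' relation and then extract ``right-neighbor'' from it; the other three directions follow either by the same argument verbatim or by invoking Proposition~\ref{obs:sym-S} (a half-turn rotation of $P$ exchanges left/right and above/below, while a diagonal reflection exchanges ``right'' with ``above'', etc.).

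For the easy direction, assume $I_i \ew I_j$. By the definitions of $\tsw$ and $\tnw$, ``to the left of'' forces both $I_i \tnw I_j$ and $I_i \tsw I_j$, hence $i<j$ and $\sigma(i)<\sigma(j)$, which is exactly ``$N_j$ is to the NE of $N_i$''. For the converse, assume $i<j$ and $\sigma(i)<\sigma(j)$. By Lemma~\ref{the:seg}, exactly one of $I_i\ew I_j$, $I_j\ew I_i$, $I_i\es I_j$, or $I_j\es I_i$ holds. I would rule out the last three by translating each into inequalities between $i,j$ and $\sigma(i),\sigma(j)$: $I_j\ew I_i$ gives $j<i$; $I_j\es I_i$ gives $\sigma(j)<\sigma(i)$ (since ``above'' means smaller in $\tsw$) and also $j<i$ (since ``above'' means smaller in $\tnw$); $I_i\es I_j$ gives $j<i$. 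Each contradicts the assumption, so only $I_i\ew I_j$ remains.

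For neighbors, recall from Lemma~\ref{the:covering} that $I_j$ is a right-neighbor of $I_i$ iff $I_j$ covers $I_i$ in $\tw$, i.e.\ $I_i\ew I_j$ and no $I_k$ satisfies $I_i\ew I_k\ew I_j$. Applying the first part of the observation to each pair, this is equivalent to saying $N_j$ is to the NE of $N_i$ and no index $k$ satisfies $i<k<j$ with $\sigma(i)<\sigma(k)<\sigma(j)$---precisely that $N_j$ is a NE-neighbor of $N_i$. The remaining three directions (SW, NW, SE) can either be handled by an identical case analysis or be deduced from the symmetries of Propositions~\ref{obs:sym-orders} and~\ref{obs:sym-S}. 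The only mildly delicate step is the systematic elimination of the three spurious relations in the converse direction; once that is dispatched, everything else is bookkeeping.
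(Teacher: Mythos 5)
Your proof is correct and follows essentially the same route as the paper's, which compresses both directions into the single equivalence ``$I_i\ew I_j$ iff $I_i\tsw I_j$ and $I_i\tnw I_j$'' (itself a consequence of the tetrachotomy of Lemma~\ref{the:seg}), and leaves the neighbor statement implicit behind ``Consequently''---your explicit passage through the covering characterization of Lemma~\ref{the:covering} is exactly the intended justification. One harmless slip: in eliminating $I_j\es I_i$ the $\tnw$ comparison actually gives $i<j$ rather than $j<i$, but the $\tsw$ comparison $\sigma(j)<\sigma(i)$ already supplies the contradiction, so the case analysis stands.
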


\begin{proof}
The segment $I_{j}$ is to the right of $I_i$ if and only if $I_i\tsw
I_{j}$ and $I_i\tnw I_{j}$. By construction of $\si=S(P)$, this means
that $i<j$ and $\si(i) <\si(j)$. Equivalently, $N_{j}$ lies to the  NE
of $N_i$.
\end{proof}

\nid \textbf{Remark.} An analogous fact holds for \emph{rectangles} of a floorplan and
points in the graph of the corresponding \emph{R-permutation}.
It is not stated explicitly in~\cite{abp},
but follows directly from the definitions
in the same way as
 Observation~\ref{the:nei_dia} does.

\medskip

Since the neighborhood relations characterize  the S-equivalence
class, we have proved the following result.
\begin{Corollary}
\label{cor:injective}
  Two floorplans are S-equivalent if and only if they have
  the same S-permutation.
\end{Corollary}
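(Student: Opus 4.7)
The plan is to break the statement into its two directions, both of which flow from Observation~\ref{the:nei_dia}, which links neighborhood relations between segments of a floorplan to NE-/NW-/SE-/SW-neighbor relations between points in the graph of its S-permutation.

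For the forward direction, S-equivalence by definition provides matching labelings of the segments of $P_1$ and $P_2$ that preserve pairwise left-, right-, above-, and below-neighbor relations. Taking transitive closures, these matchings preserve the relations $\ew$ and $\es$, and hence also the linear orders $\tnw$ and $\tsw$. It follows that the labels produced by these two orders agree between $P_1$ and $P_2$, so $S(P_1)=S(P_2)$.

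For the converse, suppose $S(P_1)=S(P_2)=\sigma$. Label the segments of $P_1$ as $I_1,\ldots,I_n$ and those of $P_2$ as $J_1,\ldots,J_n$, each according to its own $\tnw$~order. Because both floorplans realize the same permutation $\sigma$, the graph point $N_i=(i,\sigma(i))$ is the same object for both; in particular, the NE-/NW-/SE-/SW-neighbor relations among $\{N_1,\ldots,N_n\}$ are identical in the two settings. Applying Observation~\ref{the:nei_dia} to each floorplan translates these graph-relations back into segment-relations: for every pair $(k,m)$, the right/left/below/above-neighbor relationship between $I_k$ and $I_m$ coincides with that between $J_k$ and $J_m$. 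This is exactly the condition in the definition of S-equivalence, so $P_1$ and $P_2$ are S-equivalent.

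No real obstacle remains; the substantive work has already been done in establishing Observation~\ref{the:nei_dia}. The only point that deserves a word of care is to make sure that agreement of neighborhood relations of all four types (not merely of the finer orders $\tsw$ and $\tnw$) is what is needed, and this is precisely what Observation~\ref{the:nei_dia} delivers. Hence the corollary is essentially a packaging statement, confirming that $S$ descends to an injection from S-equivalence classes into permutations.
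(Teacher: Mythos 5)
Your proposal is correct and follows essentially the same route as the paper: the forward direction by noting that matching neighborhood relations determine the orders $\tnw$ and $\tsw$, and the converse by invoking Observation~\ref{the:nei_dia} to recover all segment neighborhood relations from the graph of $S(P)$. No further comment is needed.
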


%=========================================================================
\subsection{\boldmath{$(2\mn14\mn3, 3\mn41\mn2)$}-Avoiding  permutations}
\label{sec:patterns}
%=========================================================================

In this section we first discuss the \emph{dash notation} and
\emph{bar notation} for pattern avoidance in permutations, and then
prove several facts about $(2\mn14\mn3, 3\mn41\mn2)$-avoiding
permutations.
As 
will be proved in Section~\ref{sec:main}, these are precisely the S-permutations
obtained from floorplans.

In the classical notation, a permutation $\pi=a_1 a_2 \dots a_n$
\emph{avoids} a permutation (\emph{a pattern}) $\tau=b_1 b_2 \dots
b_k$ if there are no
$1\leq i_1 < i_2 < \ldots < i_k \leq n$
such that
$a_{i_1} a_{i_2} \dots a_{i_k}$ (a subpermutation of $\pi$) is order
isomorphic to $\tau$ ($b_x<b_y$ if and only if $a_{i_x} < a_{i_y}$).

The dash notation and the bar notation generalize the classical
notation and provide a convenient way to define more classes of
restricted permutations. For a recent survey, see~\cite{einar}.

In the dash notation, some letters corresponding to those from the
pattern $\tau$ may be required to be adjacent in the permutation
$\pi$, in the following way. If there is a dash between two letters in
$\tau$, the corresponding letters in $\pi$ may occur at any distance
from each other; if there is no dash, they must be adjacent in $\pi$.
For example, $\pi=a_1 a_2 \dots a_n$ avoids $2\mn14\mn3$ if there are
no $1\leq i < j<\ell <m$ such that $\ell=j+1$
and $a_{j} < a_{i} < a_{m} < a_{\ell}$.

In the bar notation, some letters of $\tau$ may have bars.
A permutation $\pi$ avoids a barred pattern $\tau$ if
every occurrence of the unbarred part of $\tau$ is a sub-occurrence of
$\tau$ (with bars removed).
For example, $\pi=a_1 a_2 \dots a_n$ avoids $21\bar{3}54$ if
for any $1\leq i < j<\ell <m$ such that
$a_{j} < a_{i} < a_{m} < a_{\ell}$, there exists $k$
such that
$j<k<\ell$ and $a_{i} < a_{k} < a_{m}$
(any occurrence of the pattern $2154$ is
a sub-occurrence of the pattern $21354$).

A \emph{(reduced) Baxter permutation}
is a permutation of $[n]$ satisfying the following condition:
\begin{quote}
There are no $i, j, \ell, m \in [n]$ satisfying
$i<j<\ell<m$, $\ell=j+1$,
such that \\
\mbox{\hspace{10mm}} either $\pi(j)<\pi(m)<\pi(i)<\pi(\ell)$
and
$\pi(i)=\pi(m)+1$, \\
\mbox{\hspace{10mm}} or $\pi(\ell)<\pi(i)<\pi(m)<\pi(j)$
and
$\pi(m)=\pi(i)+1$.
\end{quote}

In the dash notation, Baxter permutations are those avoiding
$(2\mn41\mn3, 3\mn14\mn2)$, and in the bar notation, Baxter
permutations are those avoiding $(41\bar{3}52, 25\bar{3}14)$
(see~\cite{gire-these} or~\cite[Sec.~7]{einar}).
As proved in~\cite{abp}, the permutations that are obtained as
R-permutations are precisely
the Baxter permutations.
It turns out that the permutations that are obtained as
S-permutations may be defined by similar conditions,
given below in Proposition~\ref{the:av2}. As in the Baxter case, these
conditions can be defined in three different ways.

\begin{Lemma}
\label{the:av}
Let $\pi$ be a permutation of $[n]$. The following conditions are equivalent:
\begin{enumerate}
  \item There are no $i, j, \ell, m \in [n]$ such that $i<j<\ell<m$, $\ell=j+1$, $\pi(j)<\pi(i)<\pi(m)<\pi(\ell)$, $\pi(m)=\pi(i)+1$.
  \item In the dash notation, $\pi$ avoids $2\mn 14\mn 3$.
  \item In the bar notation, $\pi$ avoids $21\bar354$.
\end{enumerate}
\end{Lemma}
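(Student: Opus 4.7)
The plan is to establish the cycle of implications through $(2) \Rightarrow (1)$, $(1) \Rightarrow (2)$, $(3) \Rightarrow (2)$, and $(2) \Rightarrow (3)$. Two of these are essentially automatic: $(2) \Rightarrow (1)$ holds because condition (1) forbids only a sub-family of the patterns forbidden by (2), namely those with the extra value constraint $\pi(m) = \pi(i)+1$; and $(3) \Rightarrow (2)$ holds because an occurrence of $2\mn 14\mn 3$ has $\ell = j+1$, so there is no position strictly between $j$ and $\ell$ and the barred pattern $21\bar{3}54$ is automatically witnessed (the inner ``$3$'' cannot be supplied).

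The substantive direction is $(1) \Rightarrow (2)$, which I would argue by contrapositive. Assume $\pi$ contains $2\mn 14\mn 3$, and among all occurrences $(i,j,\ell,m)$ choose one that minimizes the gap $\pi(m)-\pi(i)$. The claim is that this gap must equal $1$, contradicting (1). If the gap were at least $2$, pick any value $v$ with $\pi(i)<v<\pi(m)$ and let $k = \pi^{-1}(v)$. Since $\pi(i), \pi(j), \pi(\ell), \pi(m)$ all lie outside the open interval $(\pi(i),\pi(m))$, the position $k$ is distinct from $i,j,\ell,m$; and since $\ell = j+1$, no position lies strictly between $j$ and $\ell$, so $k$ lies left of $j$ or right of $\ell$. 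In the first case the quadruple $(k,j,\ell,m)$ remains an occurrence of $2\mn 14\mn 3$, since $\pi(j)<\pi(i)<v=\pi(k)<\pi(m)<\pi(\ell)$; in the second, $(i,j,\ell,k)$ is an occurrence, since $\pi(j)<\pi(i)<\pi(k)=v<\pi(\ell)$. In either case the new gap is strictly smaller than $\pi(m)-\pi(i)$, contradicting minimality.

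For $(2) \Rightarrow (3)$, again by contrapositive: assume an occurrence of $21\bar{3}54$, i.e. positions $i<j<\ell<m$ with $\pi(j)<\pi(i)<\pi(m)<\pi(\ell)$ and no $k\in(j,\ell)$ satisfying $\pi(i)<\pi(k)<\pi(m)$. Then every value $\pi(p)$ for $p \in [j,\ell]$ lies below $\pi(i)$ or above $\pi(m)$. Since $\pi(j)<\pi(i)$ and $\pi(\ell)>\pi(m)$, a discrete intermediate-value argument along the sequence $\pi(j), \pi(j+1), \ldots, \pi(\ell)$ yields consecutive positions $p, p+1 \in [j,\ell]$ with $\pi(p)<\pi(i)<\pi(m)<\pi(p+1)$. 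The quadruple $(i,p,p+1,m)$ is then an occurrence of $2\mn 14\mn 3$.

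The one step requiring care is the case analysis in $(1) \Rightarrow (2)$: one must verify that each ``shrunk'' quadruple still satisfies the four strict inequalities defining the pattern. Because $v$ is chosen strictly inside $(\pi(i),\pi(m))$ and the boundary inequalities $\pi(j)<\pi(i)$ and $\pi(m)<\pi(\ell)$ of the original occurrence are preserved under either replacement, this is routine; the only real content is the bookkeeping across the two sub-cases $k<j$ and $k>\ell$, which the constraint $\ell=j+1$ makes exhaustive.
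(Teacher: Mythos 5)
Your proof is correct. It differs from the paper's mainly in its decomposition and in how the ``consecutive values'' condition is forced. The paper closes the single cycle $3 \Rightarrow 2 \Rightarrow 1 \Rightarrow 3$, with all the work concentrated in one combined argument for the contrapositive of $1 \Rightarrow 3$: starting from a $21\bar354$ occurrence, it first takes $j'$ maximal in $[j,\ell)$ with $\pi(j')<\pi(i)$ to force adjacency (this is exactly your discrete intermediate-value step for $(2)\Rightarrow(3)$, phrased via an extremal choice), and then takes $i'<j'$ maximizing $\pi(i')$ subject to $\pi(i)\leq\pi(i')<\pi(m)$ and sets $m'=\pi^{-1}(\pi(i')+1)$, which requires verifying $m'>\ell'$. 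You instead prove the two biconditionals $(1)\Leftrightarrow(2)$ and $(2)\Leftrightarrow(3)$ separately, and for $(1)\Rightarrow(2)$ you replace the paper's one-sided extremal construction by minimizing the gap $\pi(m)-\pi(i)$ over all $2\mn14\mn3$ occurrences and showing that a non-unit gap can always be shrunk by moving whichever endpoint the intermediate value allows. Your version is symmetric in the two endpoints, sidesteps the $m'>\ell'$ verification that the paper leaves implicit, and yields two reusable lemmas; the paper's version is slightly more economical, proving three implications rather than four. Both arguments are complete and the bookkeeping in your two sub-cases $k<j$ and $k>\ell$ checks out.
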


Fig.~\ref{fig:three_cond} illustrates these three conditions.
The rows (respectively, columns) marked by dots in parts~(1) and~(2) denote adjacent rows (respectively, columns).
The shaded area in part (3) does not contain points of the graph.

\begin{figure}[ht]
$$\resizebox{120mm}{!}{\includegraphics{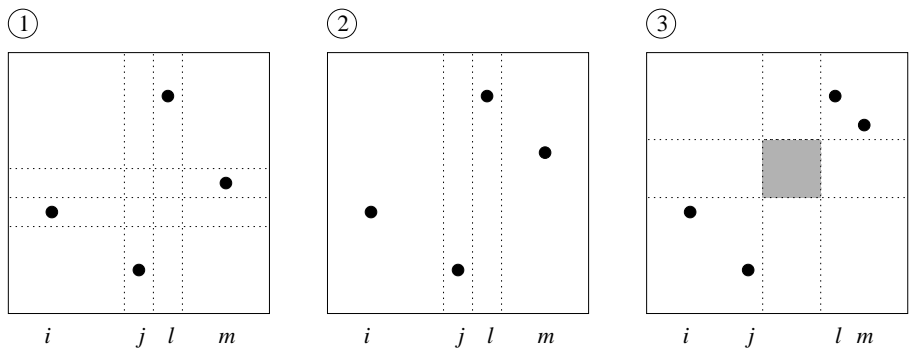}}$$
\caption{Three ways to define permutations avoiding
$2\mn14\mn3$.}
\label{fig:three_cond}
\end{figure}

\begin{proof}
It is clear that $3 \Rightarrow 2 \Rightarrow 1$: the four points
displayed in Fig.~\ref{fig:three_cond}(1)
form an occurrence of  the pattern of  Fig.~\ref{fig:three_cond}(2),
and the four points
displayed in Fig.~\ref{fig:three_cond}(2) form an occurrence of  the
pattern of Fig.~\ref{fig:three_cond}(3).

Conversely, let us prove that if a permutation $\pi$ contains the pattern $21\bar354$, then there exist $i', j', \ell', m'$ as in the first condition.
Assume that there are
$i<j<\ell<m$ such that $\pi(j)<\pi(i)<\pi(m)<\pi(\ell)$,
and there is no $k$ such that $j<k<\ell$ and $\pi(i)<\pi(k)<\pi(m)$.
Let $j'$ be the maximal number for which $j \leq j' < \ell$ and
$\pi(j')<\pi(i)$. Let $\ell'=j'+1$. Then $\pi(\ell')>\pi(m)$, and we
have a pattern $2 \mn 14 \mn 3$ with $i, j', \ell', m$.

Furthermore, let $i'$ be the number satisfying
 $ i' < j'$
and $\pi(i) \leq \pi(i')<\pi(m)$, for which $\pi(i')$ is the maximal
possible. Let $m' = \pi^{-1}(\pi(i')+1)$. Then
$m'>\ell'$
and $\pi(m')=\pi(i')+1$, and, thus, the first condition holds with
$i', j', \ell', m'$.
\end{proof}

A similar result holds for permutations that avoid $3\mn41\mn2$.
Therefore, the following proposition holds.

\begin{Proposition}
\label{the:av2}
Let $\sigma$ be a permutation of $[n]$. The following statements are equivalent:
\begin{enumerate}
  \item There are no $i, j, \ell, m \in [n]$ satisfying $i<j<\ell<m$, $\ell=j+1$, such that \\
\mbox{\hspace{10mm}} either
$\sigma(j)<\sigma(i)<\sigma(m)<\sigma(\ell)$
and $\sigma(m)=\sigma(i)+1$, \\
\mbox{\hspace{10mm}} or $\sigma(\ell)<\sigma(m)<\sigma(i)<\sigma(j)$
and $\sigma(i)=\sigma(m)+1$.
  \item In the dash notation, $\sigma$ avoids $2\mn 14\mn 3$ and $3\mn 41\mn 2$.
  \item In the bar notation, $\sigma$ avoids $21\bar354$ and $45\bar312$.
\end{enumerate}
\end{Proposition}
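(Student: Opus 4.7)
The plan is to reduce Proposition~\ref{the:av2} to Lemma~\ref{the:av} by a symmetry argument, using the fact that each of the three conditions in the proposition is a conjunction of two independent sub-conditions: one involving the pattern $2\mn14\mn3$, and one involving the pattern $3\mn41\mn2$. It therefore suffices to prove the equivalence of the three forms \emph{separately} for each pattern, and then take the conjunction.

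For the pattern $2\mn14\mn3$, Lemma~\ref{the:av} already provides the equivalence between the three formulations (first condition with $\sigma(m) = \sigma(i)+1$, dash-pattern $2\mn14\mn3$, and barred pattern $21\bar{3}54$). For the pattern $3\mn41\mn2$, I would deduce the analogous three-way equivalence by applying the \emph{complement symmetry}: given $\sigma \in [n]$, define $\bar\sigma$ by $\bar\sigma(i) = n+1-\sigma(i)$. This reflects the graph of $\sigma$ across a horizontal line, preserves positions $i$, reverses the order of values, and preserves consecutivity of values (so $\sigma(m)=\sigma(i)+1$ becomes $\bar\sigma(i)=\bar\sigma(m)+1$). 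Under this reflection, the value-sequence $(2,1,4,3)$ maps to $(3,4,1,2)$ and $(2,1,\bar 3,5,4)$ maps to $(4,5,\bar 3,1,2)$; moreover the inequality block $\sigma(j)<\sigma(i)<\sigma(m)<\sigma(\ell)$ becomes $\bar\sigma(\ell) < \bar\sigma(m) < \bar\sigma(i) < \bar\sigma(j)$. Thus applying Lemma~\ref{the:av} to $\bar\sigma$ yields immediately the equivalence of the three formulations for $3\mn41\mn2$ (i.e.\ the second clause of condition (1), the dash-pattern $3\mn41\mn2$, and the barred pattern $45\bar{3}12$).

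Taking the conjunction, the three conditions of Proposition~\ref{the:av2} become equivalent. Concretely, $(2)\Leftrightarrow (3)$ follows by applying the two equivalences $\sigma \text{ avoids } 2\mn14\mn3 \Leftrightarrow \sigma \text{ avoids } 21\bar{3}54$ and $\sigma \text{ avoids } 3\mn41\mn2 \Leftrightarrow \sigma \text{ avoids } 45\bar{3}12$ componentwise, and likewise for $(1)\Leftrightarrow (2)$.

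There is no real obstacle here — the only thing to double-check is that the complement $\bar\sigma$ really turns each formulation of the $2\mn14\mn3$ condition into the corresponding formulation of the $3\mn41\mn2$ condition (in particular, that the adjacency constraint $\ell = j+1$ on \emph{positions} is unaffected by the complement, which is immediate since complementation acts only on values). Once this bookkeeping is done, the proposition follows instantly from Lemma~\ref{the:av}.
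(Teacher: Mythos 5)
Your proposal is correct and follows essentially the same route as the paper: the paper proves Lemma~\ref{the:av} for $2\mn14\mn3$ and then simply asserts that ``a similar result holds'' for $3\mn41\mn2$ before taking the conjunction. Your use of the complement $\bar\sigma(i)=n+1-\sigma(i)$ is just a clean, explicit way of justifying that ``similar result'' (it correctly exchanges the two clauses of condition~(1), the two dash patterns, and the two barred patterns, while leaving the positional adjacency $\ell=j+1$ intact), so the argument goes through as stated.
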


\begin{Corollary}
  The group of symmetries of the square leaves invariant the set of
  $(2\mn 14\mn 3, 3\mn 41\mn 2)$-avoiding \ps.
\end{Corollary}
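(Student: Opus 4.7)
The plan is to show that the dihedral symmetry group of the square, acting on permutation graphs, preserves the avoidance condition. This group is generated by three involutions: the reverse $\si(i)\mapsto\si(n+1-i)$, the complement $\si(i)\mapsto n+1-\si(i)$, and the inverse $\si\mapsto\si^{-1}$. It therefore suffices to check that each of these operations maps the set of $(2\mn14\mn3, 3\mn41\mn2)$-avoiding \ps\ to itself.

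For the reverse and the complement I would use the dash-notation characterization (Proposition~\ref{the:av2}(2)): both operations swap the patterns $2\mn14\mn3$ and $3\mn41\mn2$. Indeed, applying $v\mapsto 5-v$ to $2143$ yields $3412$, and reading $2143$ right-to-left also yields $3412$; in both cases the position adjacency between the middle two entries is preserved. Thus the two-element set of forbidden patterns is globally stable, and avoidance is preserved.

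The main obstacle is the inverse operation, because dash patterns do not in general map to dash patterns under inversion: a position adjacency in $\si$ becomes a \emph{value} adjacency in $\si^{-1}$. My plan is to circumvent this by using the bivincular characterization of Proposition~\ref{the:av2}(1), which already incorporates \emph{both} a position adjacency ($\ell=j+1$) and a value adjacency ($\si(m)=\si(i)+1$, or $\si(i)=\si(m)+1$ for the second disjunct). This formulation is manifestly symmetric under the position/value swap induced by inversion: given indices $i<j<\ell<m$ realizing the first disjunct in $\si$, a direct substitution shows that $i'=\si(j)$, $j'=\si(i)$, $\ell'=\si(m)$, $m'=\si(\ell)$ realize the first disjunct in $\si^{-1}$, with the two adjacencies simply exchanging roles; the second disjunct is handled identically.
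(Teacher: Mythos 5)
Your proof is correct and takes essentially the same route as the paper: the paper also verifies closure under reversal via the dash-notation characterization and under inversion via the position-and-value-adjacent characterization of Proposition~\ref{the:av2}(1), noting that these two reflections generate the symmetry group. Your only deviation is including the complement as a third (redundant) generator and writing out the inversion substitution explicitly, which the paper leaves implicit.
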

\begin{proof}
  The second description in Proposition~\ref{the:av2} shows that the
  set of $(2\mn 14\mn 3, 3\mn
41\mn 2)$-avoiding \ps\ is closed under reading the \ps\ from
right to left. The first (or third) description shows that it is
invariant under taking inverses, and these two transformations
generate the symmetries of the square.
\end{proof}

We shall also use the following fact.

 \begin{Lemma}
\label{the:av_f}
Let $\sigma$ be a $(2\mn 14\mn 3, 3\mn 41\mn 2)$-avoiding permutation of $[n]$.
Then no point in the  graph of $\sigma$ has several  NW-neighbors \emm
and, several  NE-neighbors.
Similar statements hold for other pairs of adjacent diagonal directions.
\end{Lemma}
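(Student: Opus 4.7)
The plan is to prove the NW--NE case by contradiction using the bar-pattern reformulation of Proposition~\ref{the:av2}(3), and then to deduce the three other pairs of adjacent directions from the symmetry-invariance of the class (the preceding corollary, combined with Proposition~\ref{obs:sym-S}).

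Suppose $N_i$ admits two NW-neighbors $N_a,N_b$ with $a<b<i$, and two NE-neighbors $N_c,N_d$ with $i<c<d$. A short inspection of the definitions shows that both families are staircases: $\sigma(i)<\sigma(a)<\sigma(b)$ (otherwise $N_b$ would sit inside the open rectangle witnessing that $N_a$ is a NW-neighbor of $N_i$), and similarly $\sigma(i)<\sigma(d)<\sigma(c)$. Now I would compare $\sigma(a)$ and $\sigma(d)$. If $\sigma(a)<\sigma(d)$, the positions $a<i<c<d$ satisfy $\sigma(i)<\sigma(a)<\sigma(d)<\sigma(c)$, an occurrence of the pattern~$2154$; since $\sigma$ avoids $21\bar{3}54$ by Proposition~\ref{the:av2}(3), some $k$ with $i<k<c$ must satisfy $\sigma(a)<\sigma(k)<\sigma(d)$, and then $\sigma(i)<\sigma(k)<\sigma(c)$, contradicting the fact that $N_c$ is a NE-neighbor of $N_i$. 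If instead $\sigma(a)>\sigma(d)$, the positions $a<b<i<d$ satisfy $\sigma(i)<\sigma(d)<\sigma(a)<\sigma(b)$, an occurrence of~$4512$; avoidance of $45\bar{3}12$ now provides a $k$ with $b<k<i$ and $\sigma(d)<\sigma(k)<\sigma(a)$, forcing $\sigma(i)<\sigma(k)<\sigma(b)$ and contradicting that $N_b$ is a NW-neighbor of $N_i$.

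For the three remaining adjacent pairs (NE--SE, SE--SW, SW--NW), I would invoke the preceding corollary, according to which the class of $(2\mn14\mn3,3\mn41\mn2)$-avoiding permutations is preserved by every symmetry of the square. Since the induced action on the four compass directions is transitive on adjacent pairs, applying the NW--NE case to a suitable symmetric image of $\sigma$ yields each remaining statement. The only real obstacle is the pattern-finding step in the NW--NE case: the staircase structure of the two families of neighbors dictates \emph{which} of $21\bar{3}54$ or $45\bar{3}12$ has to be activated, depending on the sign of $\sigma(a)-\sigma(d)$; once this dichotomy is identified, the rest is immediate.
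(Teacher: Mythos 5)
Your proof is correct, but it follows a genuinely different route from the paper's. The paper works with the vincular (dash) formulation: it first pins down the \emph{positions} of the extremal neighbors, showing via forbidden occurrences of $3\mn 41\mn 2$ and $2\mn 14\mn 2$-type patterns that the rightmost NW-neighbor must be $N_{i-1}$ and the leftmost NE-neighbor must be $N_{i+1}$, and only then derives a contradiction by a case analysis on the relative heights of the remaining neighbors. You instead go straight to the barred formulation of Proposition~\ref{the:av2}(3): after recording the staircase structure of each family of neighbors (which the paper also establishes, in the guise of $\sigma(j)<\sigma(i')$), you exhibit an occurrence of the unbarred part of $21\bar{3}54$ or of $45\bar{3}12$ among $N_i$ and three of its neighbors, and the point whose existence is forced by the bar lands exactly inside the empty rectangle that defines one of those neighbors. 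This is shorter and arguably more natural: the barred characterization is tailor-made for ``empty-rectangle'' conditions such as the definition of a NE-neighbor, so the intermediate adjacency step becomes unnecessary; the price is that one must trust the equivalence of the three formulations (Lemma~\ref{the:av}), which the paper proves anyway. I checked both branches of your dichotomy on the sign of $\sigma(a)-\sigma(d)$ and the produced point $N_k$ does contradict the neighbor property in each case. Your symmetry reduction for the other three pairs of adjacent directions is also fine (the paper leaves those cases implicit); the only slightly misplaced citation is Proposition~\ref{obs:sym-S}, which concerns floorplans and the map $S$ --- all you need is the Corollary asserting that the class is invariant under the symmetries of the square, together with the evident fact that such a symmetry permutes the four diagonal neighbor relations transitively on adjacent pairs.
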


\begin{proof}
Assume that $N_i=(i, \si(i))$ has several NW-neighbors and several NE-neighbors.
Let $i'$ be the maximal number for which $N_{i'}$ is a NW-neighbor of $N_i$, and let $N_j$ be another NW-neighbor of $N_i$.
Then we have $j<i'$ and $\sigma(i) < \sigma(j) < \sigma (i')$.
We conclude that $i'=i-1$:
otherwise $\sigma(i'+1)<\sigma(i)$ and, therefore, $j, i', i'+1, i$
form the forbidden pattern $3 \mn 41 \mn 2$,
which is a contradiction.

Similarly, if $i''$ is the minimal number for which $N_{i''}$ is a NE-neighbor of $N_i$, then $i''=i+1$.
Let $N_k$ be another NE-neighbor of $N_i$. We have $\sigma(i) < \sigma(k) < \sigma (i+1)$.

Assume without loss of generality that $\sigma(i-1)<\sigma(i+1)$.
Now, if $\sigma(j)<\sigma(k)$,
then $j, i, i+1, k$ form the forbidden pattern $2 \mn 14 \mn 3$;
and if
$\sigma(k)<\sigma(j)$, then $j, i-1, i, k$ form the forbidden pattern
$3 \mn 41 \mn 2$,
which is, again, a contradiction.
\end{proof}

%==================================================
\subsection{S-permutations coincide with \boldmath{$(2\mn 14\mn 3, 3\mn 41\mn 2)$}-avoiding permutations}
\label{sec:main}
%==================================================

By Corollary~\ref{cor:injective},
the map $S$ induces an injection from S-equivalence classes of floorplans to permutations. Here, we characterize the image of $S$.
\begin{Theorem}
  The map $S$ induces
a bijection  between
  S-equivalence classes of floorplans of size $n+1$ and
  $(2\mn14\mn3$, $3\mn41\mn2)$-avoiding permutations of size $n$.
\end{Theorem}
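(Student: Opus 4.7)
Since Corollary~\ref{cor:injective} already gives that $S$ is injective on S-equivalence classes, the theorem reduces to identifying the image of $S$ with the set of $(2\mn14\mn3, 3\mn41\mn2)$-avoiding permutations of $[n]$. I would prove the two inclusions separately.

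\textbf{Image $\subseteq$ avoidance set.} Fix a floorplan $P$, set $\pi=S(P)$, and argue by contradiction via Proposition~\ref{the:av2}(1). Suppose there exist $i<j<\ell<m$ with $\ell=j+1$, $\pi(j)<\pi(i)<\pi(m)<\pi(\ell)$, and $\pi(m)=\pi(i)+1$. A direct check of the graph shows that $N_\ell$ is a NE-neighbor of $N_j$ and $N_m$ is a NE-neighbor of $N_i$, so by Observation~\ref{the:nei_dia} the segment $I_\ell$ is a right-neighbor of $I_j$ and $I_m$ is a right-neighbor of $I_i$; by Lemma~\ref{the:covering}, $I_\ell$ is therefore the immediate $\tnw$-successor of $I_j$ (since $\ell=j+1$) and $I_m$ is the immediate $\tsw$-successor of $I_i$ (since $\pi(m)=\pi(i)+1$). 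The remaining relative positions force $I_i$ above $I_j$ and $I_\ell$ above $I_m$. A short case analysis on the orientation of $I_j$, guided by Observation~\ref{th:se_nei}, then produces the contradiction: in each case the condition that a right-neighbor (namely $I_\ell$) rather than a below-neighbor is selected as the $\tnw$-successor of $I_j$ constrains the geometry near $I_j$ in a way that forces some segment to lie strictly between $I_i$ and $I_m$ in the $\tsw$-order, contradicting $\pi(m)=\pi(i)+1$. The second forbidden pattern is then dispatched by applying a half-turn rotation to $P$: by Propositions~\ref{obs:sym-orders} and~\ref{obs:sym-S} this reverses all four segment orders and exchanges the two patterns.

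\textbf{Avoidance set $\subseteq$ image.} Here I would construct an inverse by induction on $n$. Given a $(2\mn14\mn3, 3\mn41\mn2)$-avoiding permutation $\pi$ of $[n]$, I would identify a point $N_k$ of its graph corresponding to a \emph{detachable} segment (a corner-like point whose NW/NE/SE/SW-neighbor structure, restricted by Lemma~\ref{the:av_f}, is sufficiently simple), delete it, and reindex to obtain a $(2\mn14\mn3, 3\mn41\mn2)$-avoiding permutation $\pi'$ of $[n-1]$ (avoidance is trivially preserved: removing one point cannot create a new occurrence). By the inductive hypothesis there is a floorplan $P'$ with $S(P')=\pi'$. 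I would then show how to insert a single new segment into a specific rectangle of $P'$ to produce a floorplan $P$ with $S(P)=\pi$: the orientation of the new segment, the host rectangle, and both of its endpoints are dictated by the NW- and SW-neighbors of $N_k$ in the graph of $\pi$, and Lemma~\ref{the:av_f} guarantees that adjacent pairs of diagonal directions cannot simultaneously supply several neighbors, so the insertion data are unambiguous.

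The main obstacle lies in the inductive step of the second direction. Neither the first nor the last segment in the $\tnw$-order is automatically detachable, because either may have segments anchored to its interior, so the choice of $N_k$ requires care. One must then verify both sides of the deletion--reinsertion equivalence: that the deletion produces a sub-floorplan whose S-permutation is exactly $\pi'$, and that the proposed reinsertion produces a legitimate floorplan (in particular no $\bigplus$ joint is created) with the correct $\tnw$- and $\tsw$-orders. These verifications demand a detailed case analysis paralleling the bullet list of Observation~\ref{th:se_nei}, and it is precisely at this step that the forbidden-pattern avoidance, via Lemma~\ref{the:av_f}, rules out the pathological configurations. This is the technical core the authors warn about in the introduction; the forward direction, though less delicate, relies on the same family of local case distinctions.
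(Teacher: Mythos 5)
Your overall architecture (injectivity from Corollary~\ref{cor:injective}, then two inclusions for the image) matches the paper's, and your forward direction is close in spirit to Proposition~\ref{the:bij1}: argue by contradiction and use a symmetry of the square to dispose of the second pattern. But the forward direction as written stops exactly where the work begins. The paper's key step, which your sketch omits, is to show via Observation~\ref{the:nei_one} that all four segments $I_i, I_j, I_\ell, I_m$ are \emph{vertical} (each corresponding point is forced to have two NE- or two SW-neighbors); the contradiction then comes from comparing the heights of the horizontal below-neighbors of $I_i$ and $I_\ell$, which shows that either $I_j$ cannot be a left-neighbor of $I_\ell$ or $I_i$ cannot be a left-neighbor of $I_m$. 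The sentence ``a short case analysis \dots\ produces the contradiction'' is a placeholder for this mechanism, not a proof of it.

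The second direction contains the genuine gap. First, your parenthetical claim that deleting a point from $\pi$ ``trivially'' preserves avoidance is false for dashed (vincular) patterns: removing the point at position $k$ makes positions $k-1$ and $k+1$ adjacent, and this new adjacency can \emph{create} an occurrence of $2\mn14\mn3$ or $3\mn41\mn2$ that was not present before. Any deletion-based induction must therefore prove that the particular point chosen can be removed safely; this is not automatic and is not addressed. Second, the notion of a ``detachable'' point is never defined, its existence in every avoiding permutation is never established, and the deletion--reinsertion verification (that the $\tnw$- and $\tsw$-orders of all \emph{other} segments are unchanged, that no $\bigplus$ join appears, that the resulting floorplan satisfies $S(P)=\pi$) is explicitly deferred. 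The paper takes a different and non-inductive route in Proposition~\ref{the:bij2}: it constructs the floorplan globally on the graph of $\sigma$, one segment per point, with direction determined by the strong/weak dichotomy of Lemma~\ref{the:av_f} and endpoints determined by the four diagonal neighbors, and then verifies in Appendix~\ref{app:sur} that $K_{i+1}$ is the $\tnw$-successor of $K_i$. Your inductive strategy could in principle be made to work, but as it stands it is a plan resting on at least one incorrect assertion, with the technical core left entirely open.
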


The proof involves two steps: In
Proposition~\ref{the:bij1}
we prove that
all S-permutations are $(2\mn14\mn3$, $3\mn41\mn2)$-avoiding.
Then, in
Proposition~\ref{the:bij2},
we show that for any
$(2\mn14\mn3$, $3\mn41\mn2)$-avoiding permutation $\sigma$ of $[n]$,
there exists a floorplan $P$ with $n$ segments such that $S(P)=\sigma$.

Recall that a horizontal segment has at most one left-neighbor and at
most one right-neighbor, and a vertical segment has at most one
below-neighbor and at most one above-neighbor.
This translates as follows in terms of S-\ps.

\begin{Observation}
\label{the:nei_one}
Let $I_i$ be a segment in a floorplan $P$, and let $N_i$ be the corresponding point in the graph of $S(P)$.
If $I_i$ is a horizontal segment, then the point $N_i$ has at most one NE-neighbor and at most one SW-neighbor. Similarly, if $I_i$ is a vertical segment, then $N_i$ has at most one SE-neighbor and at most one NW-neighbor.
\end{Observation}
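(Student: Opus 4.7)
The plan is to obtain the observation as a direct corollary of Observation~\ref{the:nei_dia}, which sets up a precise dictionary between diagonal neighborhoods in the graph of $\sigma=S(P)$ and segment neighborhoods in $P$: the NE-neighbors of $N_i$ correspond to the right-neighbors of $I_i$, the SW-neighbors to the left-neighbors, the NW-neighbors to the above-neighbors, and the SE-neighbors to the below-neighbors. Through this translation, the stated bounds on diagonal neighbors of $N_i$ become bounds on segment neighbors of $I_i$: a horizontal $I_i$ should have at most one right-neighbor and at most one left-neighbor, and a vertical $I_i$ should have at most one above-neighbor and at most one below-neighbor.

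The remaining work is to verify these geometric bounds from the Definition in Section~\ref{sec:orders}. For a horizontal segment $I$, I would go through the three cases of the neighborhood definition: a left-neighbor $J$ of $I$ cannot fall under the vertical/vertical case or the horizontal-left/vertical-right case, so it must satisfy ``$J$ is vertical and the left endpoint of $I$ lies in $J$''; since $I$ has a single left endpoint, such a $J$ is unique when it exists. The right-neighbor analysis is identical. For a vertical segment $I$, the analogous case check shows that an above-neighbor must be a horizontal segment containing the unique top endpoint of $I$, and a below-neighbor must contain the unique bottom endpoint, again forcing uniqueness. In fact the horizontal case is already stated explicitly in Section~\ref{sec:orders}, and the vertical case can alternatively be obtained from it by applying the first-diagonal reflection of Proposition~\ref{obs:sym-orders}, which swaps horizontal and vertical segments as well as the relations $\tw$ and $\ts$.

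There is no real obstacle here: the statement is essentially a repackaging, through the dictionary of Observation~\ref{the:nei_dia}, of facts already noted (or trivially symmetric to facts noted) in Section~\ref{sec:orders}. The only minor point is to make sure the definition of below/above-neighbor for vertical segments is unwound correctly, which is handled in one line of case analysis or one invocation of Proposition~\ref{obs:sym-orders}.
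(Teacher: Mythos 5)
Your proposal is correct and matches the paper's own justification: the paper likewise obtains this observation by combining the dictionary of Observation~\ref{the:nei_dia} (right/left/above/below neighbors of segments correspond to NE-/SW-/NW-/SE-neighbors of points) with the geometric facts, already recorded after the neighborhood definition in Section~\ref{sec:orders}, that a horizontal segment has at most one left- and one right-neighbor and a vertical segment has at most one below- and one above-neighbor. Your extra case analysis of the definition (and the optional symmetry argument) just makes explicit what the paper leaves as a ``recall.''
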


\begin{Proposition}
  \label{the:bij1}
Let $P$ be a floorplan. Then $S(P)$
avoids $2\mn14\mn3$ and  $3\mn41\mn2$.
\end{Proposition}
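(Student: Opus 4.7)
The plan is to argue by contradiction, assuming $\sigma = S(P)$ contains the pattern $2\mn 14\mn 3$ and deriving a geometric contradiction. Avoidance of $3\mn 41\mn 2$ will then come for free by symmetry.

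First, I apply Lemma~\ref{the:av} to pass from the dash form of the pattern to the stronger form of Proposition~\ref{the:av2}(1): there exist indices $i<j<\ell<m$ with $\ell = j+1$, $\sigma(m) = \sigma(i)+1$, and $\sigma(j) < \sigma(i) < \sigma(m) < \sigma(\ell)$. The two extra adjacency conditions are the engine of the argument. From $\ell = j+1$ it follows that $N_\ell$ is a NE-neighbor of $N_j$ in the graph of $\sigma$, hence by Observation~\ref{the:nei_dia} that $I_\ell$ is a right-neighbor of $I_j$; symmetrically $\sigma(m) = \sigma(i)+1$ yields that $I_m$ is a right-neighbor of $I_i$. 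Moreover, combining the order comparisons $I_i \tnw I_j$ and $I_j \tsw I_i$ with Lemma~\ref{the:seg} forces $I_j \es I_i$, and likewise $I_m \es I_\ell$.

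Next I translate these four relations into inequalities on the $y$-coordinates of the four segments. The ``below'' relations give, by the vertical analogue of Observation~\ref{the:path}, the weak inequalities $\max_y(I_j) \le \min_y(I_i)$ and $\max_y(I_m) \le \min_y(I_\ell)$. For the ``right-neighbor'' relations, a direct inspection of the three cases in the definition (both segments vertical with a sandwiching rectangle between them, or one vertical and one horizontal meeting at an endpoint) shows that the two segments always share at least one common $y$-coordinate, which in particular yields $\max_y(I_j) \ge \min_y(I_\ell)$ and $\max_y(I_m) \ge \min_y(I_i)$. Chaining these four inequalities gives
\[
\min_y(I_i) \ge \max_y(I_j) \ge \min_y(I_\ell) \ge \max_y(I_m) \ge \min_y(I_i),
\]
so equality must hold throughout, with a common value $Y$.

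The main obstacle is to dispose of this equality case: the chain is tight precisely when the $y$-overlap of the pair $(I_j, I_\ell)$ reduces to the single value $Y = \max_y(I_j) = \min_y(I_\ell)$. I will rule this out by a short case analysis on the orientations of $I_j$ and $I_\ell$. If both are vertical, the sandwiched rectangle required by the definition of right-neighbor would have zero height, which is absurd. If one is horizontal and the other vertical, the forced meeting point is an endpoint of both segments lying in the interior of the bounding rectangle of $P$; but then only two perpendicular arms emanate from this point, and the third arm required to form an admissible junction of type $\leftvdash$, $\upvdash$, $\rightvdash$ or $\downvdash$ would either collinearly extend $I_j$ or $I_\ell$ past its purported endpoint (violating maximality of segments) or produce a forbidden $+$-junction. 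This contradiction completes the proof that $\sigma$ avoids $2\mn 14\mn 3$. Finally, the reverse of the pattern $2\mn 14\mn 3$ is $3\mn 41\mn 2$, and reflecting $P$ in the first diagonal yields a floorplan $P'$ with $S(P') = S(P)^r$ by Proposition~\ref{obs:sym-S}; applying what we have just proved to $P'$ shows that $\sigma$ also avoids $3\mn 41\mn 2$.
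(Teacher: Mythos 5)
Your proof is correct, and its skeleton matches the paper's: both reduce to the tight form of the pattern via Lemma~\ref{the:av}, handle $3\mn41\mn2$ by the diagonal symmetry of Proposition~\ref{obs:sym-S}, translate the four pattern points into the relations $I_j \es I_i$, $I_m \es I_\ell$, ``$I_\ell$ is a right-neighbor of $I_j$'' and ``$I_m$ is a right-neighbor of $I_i$'' via Observation~\ref{the:nei_dia}, and then derive a contradiction from $y$-coordinates. Where you genuinely diverge is in the middle and at the end. The paper first proves, using Observation~\ref{the:nei_one} (each of $N_i,N_j,N_\ell,N_m$ has two neighbors in some diagonal direction), that all four segments are \emph{vertical}; this makes the right-neighbor relation equivalent to a positive-length overlap of $y$-projections, and the contradiction is then obtained by a dichotomy on whether the horizontal below-neighbor of $I_i$ is lower or higher than that of $I_\ell$. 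You skip the verticality step entirely and instead chain the four inequalities into a cycle that forces $\max_y(I_j)=\min_y(I_\ell)$; the price is that you must separately exclude the degenerate case where the $y$-overlap of two neighbors is a single point, which you do correctly (zero-height sandwiched rectangle in the vertical--vertical case, and an inadmissible L-shaped meeting of two segment endpoints in the mixed case, which the floorplan axioms forbid). Your route is slightly more self-contained geometrically, while the paper's verticality lemma lets it avoid any discussion of degenerate junctions; both are sound.
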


\begin{proof}
By Proposition~\ref{obs:sym-S}, the image of $S$ is invariant by all
symmetries of the square. Hence it suffices to prove that $\si=S(P)$
avoids $2\mn14\mn3$.

Assume that $\sigma$ contains  $2\mn 14\mn 3$.
By Lemma~\ref{the:av},
there exist $i<j<\ell<m$, $\ell=j+1$ such that
$\sigma(j)<\sigma(i)<\sigma(m)<\sigma(\ell)$ and $\sigma(m)=\sigma(i)+1$
(see Fig.~\ref{fig:impos}(1)).
We claim that the four segments $I_i$, $I_j$, $I_{\ell}$, $I_m$ are vertical.

Consider $I_j$. The point $N_{\ell}$ is a NE-neighbor of $N_j$.
Consider the set $\{x: \ x > \ell, \ \sigma(j) < \sigma(x) < \sigma(\ell)
\}$. This set is not empty since it contains $m$. Let $p$ be the
smallest element in this set. Then $N_p$ is a NE-neighbor of
$N_j$. Thus, $N_j$ has at least two NE-neighbors, $N_{\ell}$ and
$N_p$. Therefore, $I_j$ is vertical by
Observation~\ref{the:nei_one}. In a similar way one can show that
$I_i$, $I_{\ell}$, $I_m$ are also vertical.

\begin{figure}[ht]
\begin{center}
$\resizebox{125mm}{!}{\includegraphics{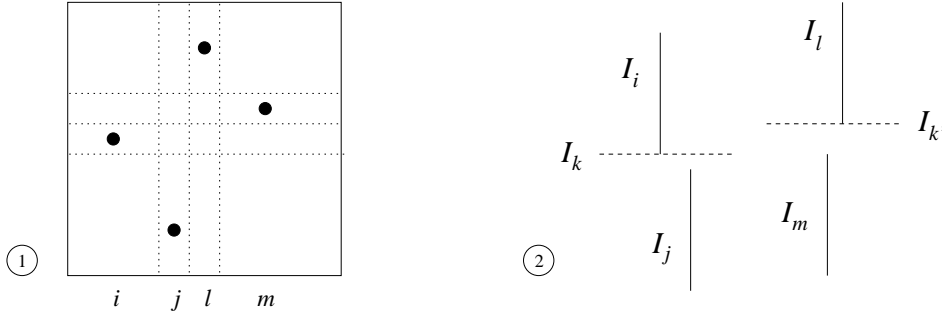}}$
\end{center}
\caption{The pattern $2\mn 14\mn 3$ never occurs in an S-permutation.}
\label{fig:impos}
\end{figure}

By Observation~\ref{the:nei_dia} we have that:
$I_j \es I_i$, $I_m \es I_{\ell}$;
$I_i \ew I_\ell$, $I_j\ew I_m$,
$I_i \ew I_m$,
$I_j \ew I_{\ell}$.
Moreover, the last two relations are neighborhood relations.
Let $I_k$ be the below-neighbor of $I_i$, and let $I_{k'}$ be the
below-neighbor of $I_{\ell}$  (see
Fig.~\ref{fig:impos}~(2)). The segments $I_k$ and $I_{k'}$ are horizontal.
If the line supporting $I_k$ is (weakly) lower than the line
supporting $I_{k'}$, then $I_j$
(which is below $I_i$)
cannot be a left-neighbor of
$I_{\ell}$ since the interiors of their vertical projections do not
intersect.
Similarly, if the line supporting $I_k$ is
 higher than the line supporting $I_{k'}$, then $I_i$ cannot
be a left-neighbor of $I_m$.
We have thus reached a contradiction, and $\sigma$ cannot contain $2\mn 14\mn 3$.
\end{proof}

\begin{Proposition}
  \label{the:bij2}
For each $(2\mn14\mn3, 3\mn41\mn2)$-avoiding permutation $\sigma$ of $[n]$, there exists a floorplan $P$ with $n$ segments such that $S(P)=\sigma$.
\end{Proposition}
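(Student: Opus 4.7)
The plan is induction on $n$. For the base case $n = 0$, the trivial one-rectangle floorplan has empty S-permutation, matching the empty $\sigma$.

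For the inductive step, let $\sigma$ be a $(2\mn14\mn3, 3\mn41\mn2)$-avoiding permutation of $[n]$. I first focus on the point $N_n = (n, \sigma(n))$, which is the rightmost point in the graph of $\sigma$ and corresponds to the last segment $I_n$ in the $\tnw$-order of the floorplan $P$ to be constructed. Since $N_n$ has no NE- or SE-neighbors, Observation~\ref{the:nei_one} forces $I_n$ to be horizontal when $N_n$ has at least two NW-neighbors and vertical when $N_n$ has at least two SW-neighbors. Lemma~\ref{the:av_f} applied to the adjacent directions NW and SW guarantees that these two conditions never hold simultaneously, so a valid orientation always exists; when neither is forced, either orientation will be admissible.

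Next, I would define $\sigma'$ as the permutation of $[n-1]$ obtained by removing the letter $\sigma(n)$ from $\sigma$ and renormalizing. It remains $(2\mn14\mn3, 3\mn41\mn2)$-avoiding, since deleting a letter can only destroy occurrences of a pattern, never create them. By the induction hypothesis, there is a floorplan $P'$ of size $n$ with $S(P') = \sigma'$, and I would extend $P'$ to $P$ by splitting its SE-corner rectangle $A_n$ with a new segment $I_n$ of the orientation determined above. The endpoints of $I_n$ are dictated by the NW- and SW-neighbors of $N_n$ in the graph of $\sigma$, which translate via Observation~\ref{the:nei_dia} to specific segments of $P'$ that should become the NW- and SW-neighbors of $I_n$ in $P$.

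The main obstacle is to verify rigorously that this placement is always feasible and yields exactly $S(P) = \sigma$. This entails a case analysis based on the chosen orientation of $I_n$ and on the structure of the SE-corner of $P'$, in particular the type of join ($\leftvdash$ or $\upvdash$) at the SE-corner of $A_n$ (cf.\ Proposition~\ref{the:r2s_1}) and the constraints from Observation~\ref{se_nei} on the segment that may legally follow $I_{n-1}$ in the $\tnw$-order. Pattern-avoidance of $\sigma$ is the key lever for ruling out configurations that would force an incompatible extension, and the uniqueness-up-to-orientation in Observation~\ref{the:nei_one} controls the freedom in the construction. A likely subtlety is that not every $P'$ satisfying $S(P') = \sigma'$ admits the desired extension: the induction should therefore either be strengthened by tracking suitable structural invariants of the SE-corner of $P'$ (so that a compatible $P'$ can always be selected), or refined to produce $P$ and $P'$ in tandem so that the SE-corner of $P'$ is, by construction, compatible with the neighborhood pattern of $N_n$ in $\sigma$.
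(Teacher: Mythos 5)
Your overall strategy differs from the paper's: the paper gives a direct global construction of $P$ on the graph of $\sigma$ (choosing directions of segments from the strong/weak classification of points, then endpoints from the neighbor structure, then verifying in two technical appendix-length steps that the result is a floorplan with $S(P)=\sigma$), whereas you propose to peel off the last point $N_n$ and induct. The preliminary steps of your plan are sound: deleting the \emph{last} letter cannot create new adjacencies, so $\sigma'$ is indeed still $(2\mn14\mn3,3\mn41\mn2)$-avoiding, and your use of Lemma~\ref{the:av_f} to get a consistent orientation for $I_n$ is correct.

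However, the inductive step contains a genuine gap that cannot be repaired in the form you describe. Your insertion operation---splitting the SE-corner rectangle of $P'$ by one new segment---always produces a last segment that is the common side of \emph{exactly two} rectangles. But for many $\sigma$ the last segment is forced to border at least three rectangles in \emph{every} floorplan of the S-equivalence class. Indeed, suppose $N_n$ has two or more NW-neighbors (take $\sigma=231$: both $(1,2)$ and $(2,3)$ are NW-neighbors of $N_3=(3,1)$, and $231$ trivially avoids both patterns). Then $I_n$ must be horizontal, and by Observation~\ref{the:nei_dia} it has two or more above-neighbors; since a horizontal segment with only one rectangle directly above it has at most one above-neighbor (no vertical segment can end on it, and the single rectangle's top side lies in a single segment or the boundary), $I_n$ must have at least two rectangles above it, in addition to the unique rectangle below it (unique because $I_n$, being last in the $\tnw$-order, has no below-neighbors). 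So $I_n$ borders at least three rectangles and cannot arise by splitting one rectangle of any floorplan $P'$. This obstruction is intrinsic to $\sigma$, so neither strengthening the induction hypothesis about the SE-corner of $P'$ nor constructing $P$ and $P'$ ``in tandem'' can save the step; one would have to replace the insertion operation by something substantially more complicated (a cut that crosses and reorganizes several existing rectangles), at which point the difficulty you deferred to ``a case analysis'' becomes essentially the whole content of the proposition. This is precisely why the paper avoids induction and builds all $n$ segments simultaneously.
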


\begin{proof}
We construct $P$ on the graph of $\sigma$.
The boundary of the graph is also the boundary of $P$.
For each point $N_i = (i, \sigma(i))$ of the graph, we draw a segment $K_i$ passing through $N_i$ according to certain rules.
We first determine the direction of the segments $K_i$ (Paragraph~A
below),
and then the coordinates of their endpoints (Paragraph~B).
We prove that we indeed obtain a floorplan (Paragraph~C),
and that its S-permutation is $\sigma$ (Paragraph~D).
This is probably one of the most involved proofs of the paper.

\medskip

\paragraph{\bf A. Directions of the segments $K_i$}
\label{par:dir}\mbox{}

\nid
Let $N_i = (i, \sigma(i))$ be a point in the graph of $\sigma$.
Our first two  rules are forced by Observation~\ref{the:nei_one}. They
are illustrated in Fig.~\ref{fig:rules0} (no point of the graph
lies in the shaded areas):
\begin{itemize}
  \item If $N_i$ has several NW-neighbors or several    SE-neighbors,
then $K_i$ is horizontal.
  \item If $N_i$ has several SW-neighbors or several NE-neighbors,
then $K_i$ is vertical.
\end{itemize}

\nid By Lemma~\ref{the:av_f}, these two rules
never apply simultaneously to the same point $N_i$.
If one of them applies, 
we say that $N_i$ is a \emph{strong point}.
Otherwise, 
 $N_i$ is a \emph{weak point}.
This means that $N_i$ has at most one neighbor in each direction.

\begin{figure}[ht]
\begin{center}
$\resizebox{140mm}{!}{\includegraphics{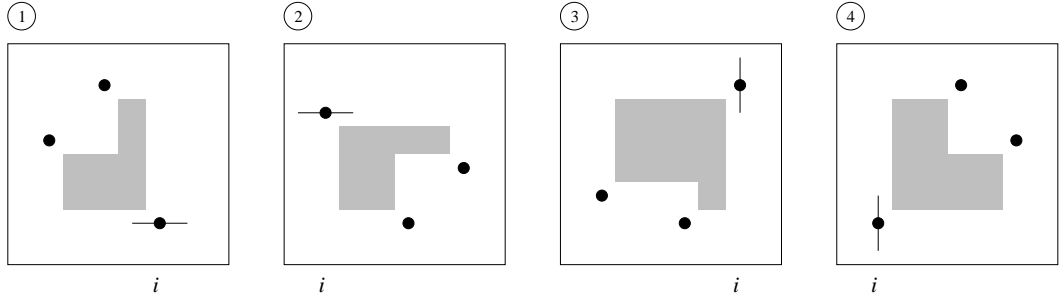}}$
\end{center}
\caption{Rules for determining the direction of the segment $K_i$ passing through a strong point.}
\label{fig:rules0}
\end{figure}

We claim that if $N_{i}$ and $N_{j}$ are weak points, then they
are in adjacent rows if and only if they are in adjacent columns. Due
to symmetry, it suffices to show the \emph{if} direction.
Let $N_{i}$ and $N_{i+1}$ be weak points, and assume without loss of
generality that $\sigma(i) < \sigma(i+1)$.
If $\sigma(i+1) - \sigma(i) > 1$, then there are points of the graph
of $\sigma$
between the rows that contain
$N_i$ and $N_{i+1}$;
thus, either $N_{i}$ has at least two NE-neighbors or
$N_{i+1}$ has at least two SW-neighbors,
which means that one of them at least is strong. Hence $\si(i+1)=\si(i)+1$.

Thus, weak points appear as ascending or descending sequences of
adjacent neighbors:
$N_i, N_{i+1},
\ldots, N_{i+\ell}$ with  $\sigma(i)=\sigma(i+1)-1=
 \cdots = \sigma(i+\ell)-\ell$ or
 $\sigma(i)=\sigma(i+1)+1
= \cdots =\sigma(i+\ell)+\ell$.
Note that a weak point $N_i$ can be isolated.

For weak points, the direction of the corresponding segments is determined as follows:

\begin{itemize}
  \item If $N_i, N_{i+1}, \ldots, N_{i+\ell}$ is a maximal
    \emph{ascending} sequence of weak points,
then the directions of $K_i, K_{i+1},
\ldots,$ $K_{i+\ell}$ are chosen in such a way
that $K_j$ and $K_{j+1}$ are never both horizontal, for $i
\leq j < i+\ell$. Hence several choices are possible
(this multiplicity of choices is consistent with the fact that all
S-equivalent floorplans give the same \p).
\item If $N_i, N_{i+1},
\ldots, N_{i+\ell}$ is a maximal \emph{descending} sequence of weak
points, then the directions of $K_i, K_{i+1},
\ldots,$ $K_{i+\ell}$ are chosen
in such a way that $K_j$ and $K_{j+1}$ are never both vertical, for $i
\leq j < i+\ell$.
\end{itemize}

In particular, for an isolated weak point $N_i$, the direction of
$K_i$ can be chosen arbitrarily.

\medskip
\paragraph{\bf  B. Endpoints of the segments $K_i$}
\label{par:endpoints}\mbox{}

\nid Once the directions of all $K_i$'s are chosen,
their endpoints are set as follows
(see Fig.~\ref{fig:rules} for an illustration):
\begin{itemize}
  \item If $K_i$ is vertical (which implies that $N_i$
 has at most one NW-neighbor and at most one SE-neighbor):

\begin{itemize}
\item If $N_i$ has a NW-neighbor $N_j$, then the upper endpoint of $K_i$ is
set to be 
 $(i, \sigma(j))$. We say that \emph{$N_j$ bounds
  $K_i$ from above}.
Otherwise (if $N_i$ has no NW-neighbor), $K_i$ reaches the upper side
of the boundary.

\item If $N_i$ has a SE-neighbor $N_k$, then the lower endpoint of $K_i$ is
$(i, \sigma(k))$. We say that \emph{$N_k$ bounds $K_i$ from below}.
Otherwise,
$K_i$ reaches the lower side of the boundary.
\end{itemize}
\item If $K_i$ is horizontal (which
implies that
$N_i$  has at most one SW-neighbor and at most one NE-neighbor):
\begin{itemize}
\item
If $N_i$ has a SW-neighbor $N_j$, then the left endpoint of $K_i$ is
$(j, \sigma(i))$. We say that \emph{$N_j$ bounds $K_i$ from the left}.
Otherwise, 
$K_i$ reaches the left side of the boundary.

\item
If $N_i$ has a NE-neighbor $N_k$, then the right endpoint of $K_i$ is
$(k, \sigma(i))$. We say that \emph{$N_k$ bounds $K_i$ from the right}.
Otherwise, 
$K_i$ reaches the right side of the boundary.
\end{itemize}
\end{itemize}

\begin{figure}[ht]
$$\resizebox{95mm}{!}{\includegraphics{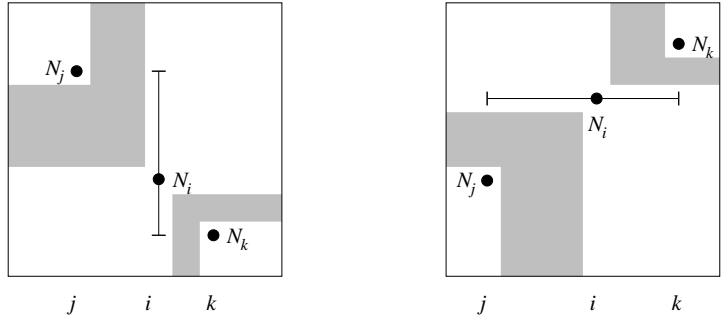}}$$
\caption{Determining the endpoints of the segment $K_i$: the points $N_j$ and $N_k$ bound the segment $K_i$.}
\label{fig:rules}
\end{figure}

Fig.~\ref{fig:alg} presents an example of the whole construction:
in Part 1, the directions are determined for strong (black) points,
and chosen for weak (gray) points; in Part 2, the endpoints are
determined and a floorplan is obtained. Notice that $\sigma$
is the S-permutation
associated with the floorplan
$P$ of Fig.~\ref{fig:ex1}, but here we have obtained a different
floorplan, $P'$.
We leave it to the reader to check that another choice of
directions of segments passing through weak points leads to $P$.

The question of when $S(P)=S(P')$ will be studied in
Section~\ref{sec:same-S}.

\begin{figure}[ht]
$$\resizebox{110mm}{!}{\includegraphics{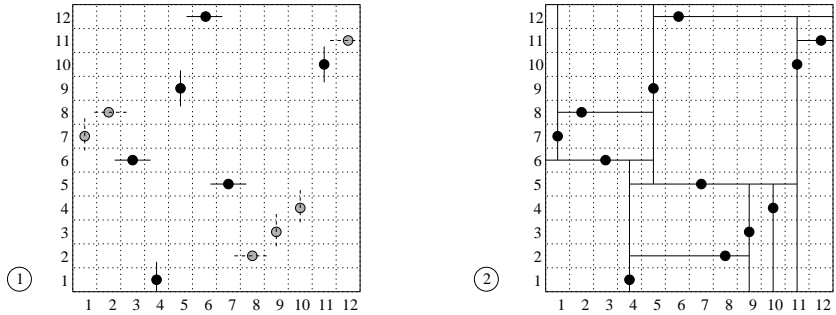}}$$
\caption{Constructing a floorplan from a $(2\mn14\mn3, 3\mn41\mn2)$-avoiding permutation.}
\label{fig:alg}
\end{figure}

\nid{\bf Remark.} Using dynamic programming, one can determine in
linear time the values
$$
m_i=\max \{k <i : \si(k)<\si(i)\}\cup\{0\} \quad \hbox{and }\quad
M_i=\max \{k <i : \si(k)>\si(i)\}\cup\{0\}.
$$
By applying this procedure to $\si$ and to the \ps\ obtained by applying
to $\si$ a symmetry of the square, one can decide in linear time, for
each point $N_i$ of $\si$, if it has one or several NW-neighbours and locate one of them. This
implies that the above construction of a floorplan starting from a
$(2\mn14\mn3, 3\mn41\mn2)$-avoiding \p\ can be done in linear time.

\medskip
\paragraph{\bf C. The construction 
 indeed determines a floorplan}\label{par:indeed}\mbox{}

\nid In order to prove this, we need to show that
two segments never cross,
and that the endpoints of any segment $K_i$ are contained in segments
perpendicular to $K_i$ (unless they lie on the boundary).
The following observation will simplify some of our proofs.
\begin{Observation}
\label{obs:sym-reverse}
  Let $\si$ be a $(2\mn14\mn3, 3\mn41\mn2)$-avoiding \p, and let $\si'$ be obtained by applying a
  rotation $\rho$ (a quarter turn or a half-turn,
clockwise or counterclockwise)
to (the graph of)
  $\si$.  If $P$ is a configuration of segments obtained from $\si$ by
  applying the rules of Paragraphs A and B above, then $\rho(P)$ can be
  obtained from $\si'$ using those rules.
\end{Observation}
To prove this, it suffices to check that the rules are invariant by a
$90^\circ$ rotation, which is immediate\footnote{That the construction
  has the other symmetries of Proposition~\ref{obs:sym-S}
is also true, but less obvious.
We shall only use Observation~\ref{obs:sym-reverse}.}.

\medskip

\noindent{\bf C.1.  Let $K_i$ be a vertical (respectively, horizontal) segment, and let $N_j$ and $N_k$ be the points that bound it. Then the segments $K_j$ and $K_k$ are horizontal (respectively, vertical).}

\nid
Thanks to Observation~\ref{obs:sym-reverse},
it suffices to prove this claim for a \emph{vertical} segment $K_i$ and for the point $N_j$ that bounds it \emph{from above}. We need to prove that $K_j$ is a horizontal segment.

We have $j<i$ and $\sigma(i) < \sigma(j)$, and, since $N_{j}$ is a NW-neighbor of $N_i$, there is no $\ell$ such that $j<\ell<i$ and $\sigma(i) < \sigma(\ell) < \sigma(j)$.
Furthermore, there is no $\ell$ such that $j<\ell<i$, $\sigma(j) < \sigma(\ell)$, or such that
$\ell<j$, $\sigma(i) < \sigma(\ell) < \sigma(j)$: otherwise $N_i$
would have several
NW-neighbors and, therefore, $K_i$ would be horizontal. Now, if
$i-j>1$,
then there
exists $\ell$ such that $j<\ell<i$, $\sigma(\ell) < \sigma(i)$;
and if $\sigma(j)-\sigma(i)>1$, then there
exists $m$ such that $i<m$,
$\sigma(i) < \sigma(m) < \sigma(j)$. In both cases $N_j$ has several
SE-neighbors, and, therefore, $K_j$ is horizontal as claimed.

It remains to consider the case where $j=i-1$ and $\sigma(j)
=\sigma(i)+1$.
If the point $N_i$ is strong, then (since $K_i$ is vertical) it has
several NE-neighbors or
several SW-neighbors. Assume without loss of generality that $N_i$ has
several
NE-neighbors. Let $\ell$ be the minimal number such that $N_{\ell}$ is
a NE-neighbor of $N_i$, and let $N_m$ be another NE-neighbor of
$N_i$. Then we have $\sigma(i-1) < \sigma(m) < \sigma(\ell)$ and
$\sigma(\ell-1) \leq \sigma(i)$. However, then $i-1, \ell-1, \ell, m$
form a forbidden pattern $2\mn 14 \mn 3$. Therefore, $N_i$ is a weak
point.
Clearly, $N_{i-1}$ as a unique SE-neighbor (which is $N_i$). Its NE-
and SW-neighbors coincide with those of $N_i$, so that there is at
most one of each type. Thus if $N_{i-1}$ is strong, it has several NW-neighbors, and $K_{i-1}$ is horizontal, as claimed.
If $N_{i-1}$ is weak,
then
the rules that determine the direction of the
segments passing through
(descending)
weak points implies that  $K_{i-1}$ and $K_i$ cannot
be both vertical. Therefore, $K_j=K_{i-1}$ is horizontal, as claimed.

\medskip

\noindent{\bf C.2. If $N_j$ and $N_k$ are the points that bound the segment $K_i$, then the segments $K_j$ and $K_k$ contain the endpoints of $K_i$}

\nid
Thanks to Observation~\ref{obs:sym-reverse},
it suffices to show that if $K_i$ is a vertical segment and $N_j$
bounds it from above, then $K_j$ (which is horizontal as shown in
Paragraph C.1 above)
contains the point $(i, \sigma(j))$.
We saw in Paragraph C.1
that in this situation there is no $\ell$ such that $j<\ell <i$, $\sigma(j) < \sigma(\ell)$. This means that there is no point $N_\ell$ that could bound $K_j$ from the right before it reaches $(i, \sigma(j))$.

\medskip
\nid{\bf C.3. Two segments $K_{i}$ and $K_{j}$ cannot cross}\label{par:cross}\mbox{}

\nid Assume
that $K_{i}$ and $K_{j}$ cross.
Assume without loss of generality that $K_{i}$ is vertical and $K_{j}$ is horizontal,
so that their crossing point is $(i, \sigma(j))$.
We have either $i<j$ or $j<i$, and $\sigma(i)<\sigma(j)$ or $\sigma(j)<\sigma(i)$.
Assume without loss of generality $j<i$ and $\sigma(i)<\sigma(j)$.
Then $N_j$ is to the NW of $N_i$.
The ordinate of the (unique) NE-neighbor of $N_i$ is hence at most
$\sigma(j)$. By construction, the upper point of $K_i$ has ordinate at most
$\sigma(j)$, while
$K_j$ lies at ordinate $\sigma(j)$, and thus $K_{i}$ and $K_{j}$ cannot cross.

We have thus proved that our construction indeed gives a floorplan.
Let us finish with an observation on joins of segments of this floorplan,
which follows from Paragraph C.2
and is illustrated below.

\begin{Observation}
  \label{the:join-types}
Suppose that a vertical segment $K_i$ and a horizontal segment $K_j$
join at the point $(i, \sigma(j))$.
Then:
\begin{itemize}
  \item If the join of $K_i$ and $K_j$ is of the type $\downvdash$, then $i>j$.
  \item If the join of $K_i$ and $K_j$ is of the type $\upvdash$, then $i<j$.
  \item If the join of $K_i$ and $K_j$ is of the type $\rightvdash$, then $\sigma(i)<\sigma(j)$.
  \item If the join of $K_i$ and $K_j$ is of the type $\leftvdash$, then $\sigma(i)>\sigma(j)$.
  \end{itemize}
\medskip
  $$\includegraphics[width=90mm]{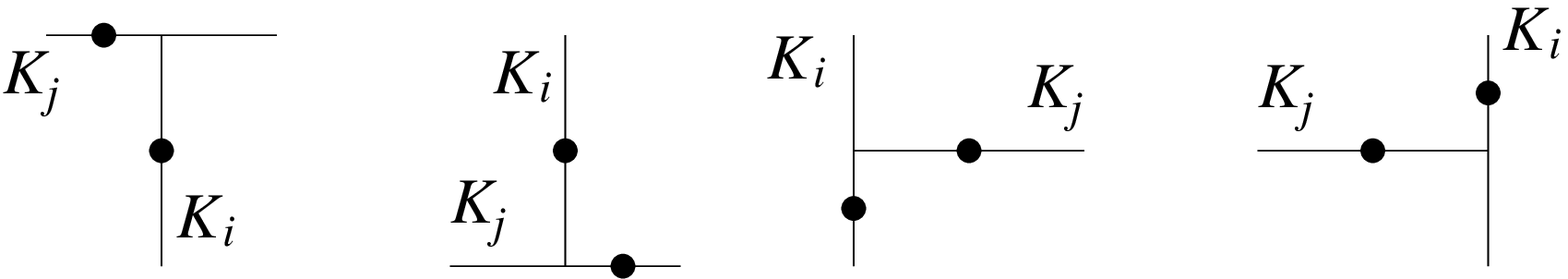}$$
\end{Observation}

%%%%%%%%%%%%%%%%%%%%%%%%%%%%%%%%%%%%%%%%%%%%%%%%%%%%%%%%
\medskip\paragraph{\bf D. For any floorplan $P$ obtained by the
  construction described above, $S(P)=\sigma$}
\mbox{}

This (concluding) part of the proof is 
given in Appendix~\ref{app:sur}.
\end{proof}

%%%%%%%%%%%%%%%%%%%%%%%%%%%%%%%%%%%%%%%%%%%%%%%%%%%%%%%%%%%%%%%%%%%%
\section{Relations between the R- and S-permutations}
\label{sec:seg_rec}
%%%%%%%%%%%%%%%%%%%%%%%%%%%%%%%%%%%%%%%%%%%%%%%%%%%%%%%%%%%%%%%%%%%%
In this section we prove that
if two floorplans are R-equivalent, 
 they are S-equivalent
(that is,
 the R-equivalence 
refines the S-equivalence).
In fact,  we give a simple graphical
way to construct $S(P)$ from $R(P)$,
which also shows that $S(P)$ and
$R(P)$ taken together form the \emm complete, Baxter \p\ associated
with the reduced Baxter \p\ $R(P)$.
Finally, we characterize
 the R-equivalence classes that belong to the same S-equivalence class.

%==================================================
\subsection{Constructing $S(P)$ from $R(P)$}
\label{sec:r2s}
%==================================================

Let $P$ be a floorplan of size $n+1$. We draw the graphs of
$\rho = R(P)$ and $\sigma = S(P)$ on the same diagram in the
following way (Fig.~\ref{fig:ex1_rec}).
For the graph of $\rho$ we use
an $(n+1)\times(n+1)$ square whose columns and rows are numbered by
$1, 2, \dots, n+1$. The points of the graph of $\rho$
are placed
at the centers of these squares, and these points
are black.
For the graph of $\sigma$ we use the grid lines of the same drawing,
when the $i$th vertical (respectively, horizontal) line is the grid
line between the $i$th and the $(i+1)$st columns (respectively,
rows). The point $(i, \sigma(i))$ 
is placed at
the intersection of the $i$th vertical grid line and the $j$th
horizontal grid line, where $j=\sigma(i)$.
Such points are white. The whole drawing
is called the \emph{combined diagram} of $P$.
Note that the extreme (rightmost, leftmost, etc.) grid lines are not
used.

\begin{figure}[ht]
$$\resizebox{165mm}{!}{\includegraphics{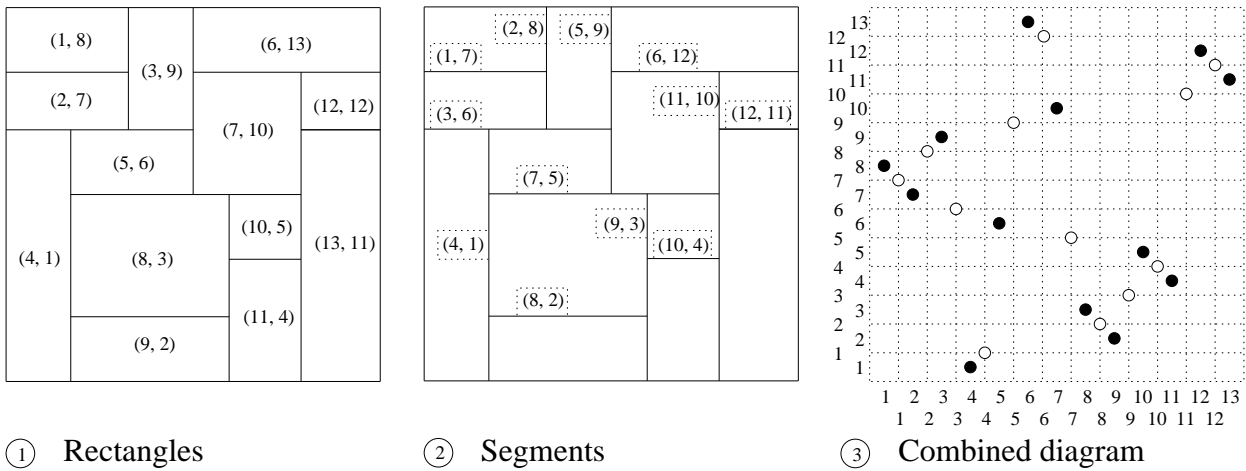}}$$
\caption{The floorplan $P$ from Fig.~\ref{fig:ex1}: (1) The labeling of rectangles; (2) The labeling of segments; (3) The combined diagram:
$R(P) = 8 \ 7 \ 9 \ 1 \ 6 \ 13 \ 10 \ 3 \ 2 \ 5 \ 4 \ 12 \ 11$ (black points)
together with
$S(P) = 7 \ 8 \ 6 \ 1 \ 9 \ 12 \ 5 \ 2 \ 3 \ 4 \ 10 \ 11$ (white points).}
\label{fig:ex1_rec}
\end{figure}

\begin{Definition}
  \label{the:bax}
Let $\rho$ be a Baxter permutation of $[n+1]$. For 
$1 \leq i \leq n$, define $j_i$ as follows:
\begin{itemize}
  \item if $\rho(i) < \rho (i+1)$, then
$\displaystyle
j_i= \max \{\rho(k), k \le i \hbox{ and } \rho(k) <\rho(i+1)\},
$
  \item if $\rho(i) > \rho (i+1)$, then
$\displaystyle
j_i= \max \{\rho(k), k \ge i+1 \hbox{ and } \rho(k) <\rho(i)\}.
$
\end{itemize}
\end{Definition}
\nid The definition of Baxter \ps\ implies that
\begin{itemize}
  \item if $\rho(i) < \rho (i+1)$, $k \ge i+1$ and $\rho(k)> \rho(i)$,
    then $\rho(k) >\rho(j_i)$,
  \item if $\rho(i) > \rho (i+1)$,  $k \le i$ and $\rho(k)> \rho(i+1)$,
    then $\rho(k) >\rho(j_i)$.
\end{itemize}

\begin{Theorem}
\label{the:r2s}
Let $P$ be a floorplan of size $n+1$, and let $\rho =
R(P)$.
Then $S(P) = (j_1, j_2, \dots, j_n)$, where $j_i$ is defined in
Definition~\ref{the:bax}.
In particular, R-equivalent floorplans are also S-equivalent.
\end{Theorem}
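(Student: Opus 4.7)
The plan is to split into the two cases of Definition~\ref{the:bax} according to the sign of $\rho(k+1)-\rho(k)$, use a symmetry of the floorplan to reduce to one case, and then identify, on the segment $I_k$, the rectangle of $\tsw$-label $j_k$. By Proposition~\ref{the:r2s_1}, the case $\rho(k)<\rho(k+1)$ (Case~A) corresponds to a $\upvdash$-join at the SE-corner of $A_k$: the segment $I_k$ is then vertical and contains the right side of $A_k$ together with the left side of $A_{k+1}$. The case $\rho(k)>\rho(k+1)$ (Case~B) corresponds to a $\leftvdash$-join with $I_k$ horizontal, containing the lower side of $A_k$ and the upper side of $A_{k+1}$. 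An anti-diagonal reflection of the floorplan (the composition of the two reflections of Proposition~\ref{obs:sym-orders}) interchanges horizontal and vertical segments and swaps right-neighbor with below-neighbor, and hence interchanges Cases~A and~B; using Proposition~\ref{obs:sym-S} and its analogue for $R(P)$ from~\cite{abp}, Case~B reduces to Case~A, so we treat only Case~A from here on.

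In Case~A, let $m^*=\rho^{-1}(j_k)$, so that $m^*\le k$ and $\rho(m^*)=j_k<\rho(k+1)$; in particular $A_{m^*}$ is to the left of $A_{k+1}$ in the rectangle sense. The geometric crux is the claim that \emph{$A_{m^*}$ has its right side on $I_k$, and its NE-corner coincides with the upper endpoint of $I_k$.} To prove this, I would use the maximality of $j_k$ together with the Baxter avoidance of $\rho$: any rectangle $A_m$ with $m\le k$ and $j_k<\rho(m)<\rho(k+1)$ would violate the maximality of $j_k$, so no such rectangle exists. Combined with the $\upvdash$-join at the SE-corner of $A_k$, which forces $I_k$ to continue downward past $A_k$, and with the characterization of $A_{k+1}$ as the topmost rectangle whose left side lies on $I_k$ (Proposition~\ref{the:r2s_1}), this pins $A_{m^*}$ as the topmost rectangle whose right side lies on $I_k$.

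A second ingredient is the alternation of rectangles and segments in the combined $\tsw$-order on the $2n+1$ elements: $A_r$ occupies combined position $2\rho(r)-1$ and $I_m$ occupies combined position $2\sigma(m)$. This alternation, which follows from the fact that each $I_m$ sits in the $\tnw$-order between $A_m$ and $A_{m+1}$ via Proposition~\ref{the:r2s_1} and a local inspection at its $\upvdash$/$\leftvdash$ join, together with the geometric identification of $A_{m^*}$ as the rectangle immediately $\tsw$-preceding $I_k$, yields $\sigma(k)=j_k$. Equivalently, the permutation $\pi$ of $[2n+1]$ with $\pi(2k-1)=2\rho(k)-1$ and $\pi(2k)=2j_k$ is the classical complete Baxter \p\ associated with $\rho$~\cite{baxter-joichi}. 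The final assertion that R-equivalent floorplans are S-equivalent follows immediately: $(j_1,\dots,j_n)$ depends only on $\rho=R(P)$, and by Corollary~\ref{cor:injective} equality of S-permutations is equivalent to S-equivalence.

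The main obstacle I foresee is the geometric identification in the second paragraph: one has to exclude, via the Baxter property, any intermediate rectangle that would block the chain from $A_{m^*}$ up to the top of $I_k$, with a case split on whether $A_{m^*}=A_k$ or $A_{m^*}$ lies strictly above $A_k$ along $I_k$. Justifying the combined-order alternation in the third paragraph is the other technical point; I expect both to reduce to careful local analyses at the $\upvdash$/$\leftvdash$ joins using the neighborhood machinery of Sections~\ref{sec:orders} and~\ref{sec:bij}.
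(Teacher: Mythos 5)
Your skeleton is the paper's own: reduce by a symmetry of the square to one join type, use Proposition~\ref{the:r2s_1} to place the segment $I_k$ at the SE-corner of $A_k$, between $A_k$ and $A_{k+1}$, and then identify the rectangle flanking $I_k$ on its far side in order to match $\sigma(k)$ with $j_k$. But you run that identification in the harder direction, and the step you yourself flag as the ``geometric crux'' is genuinely open as written. In your Case~A, let $C$ be the topmost rectangle whose right side lies on $I_k$. The ingredients you invoke give only an upper bound: $C$ lies weakly above $A_k$ along $I_k$ (so its $\tnw$-index is at most $k$) and is a left-neighbor of $A_{k+1}$ (so its $\tsw$-label is less than $\rho(k+1)$), whence, by the very definition of $j_k$ as a maximum, its $\tsw$-label is \emph{at most} $j_k$. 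Nothing in your sketch excludes that label being strictly smaller --- i.e., $A_{m^*}$ sitting away from $I_k$ while some rectangle of smaller $\tsw$-label is the topmost one on $I_k$. Closing this requires either a pattern argument using the Baxter property of $\rho$ (such a configuration forces an occurrence of $3\mn14\mn2$), or the cleaner move below; as it stands the crux is asserted, not proved.

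The paper sidesteps the issue by identifying the flanking rectangle from the $\tsw$ side rather than from $j_k$. Reflecting Proposition~\ref{the:r2s_1} across a horizontal line (via Proposition~\ref{obs:sym-orders}) shows that the $j$th segment in the $\tsw$ order sits at the NE-corner of $B^j$, the $j$th rectangle in the $\tsw$ order; taking $j=\sigma(k)$ identifies, with no reference to $j_k$, the topmost rectangle whose right side is on $I_k$ as $B^{\sigma(k)}$ and the lowest rectangle whose left side is on $I_k$ as $B^{\sigma(k)+1}$. Comparing the positions of $A_k$, $A_{k+1}$, $B^{\sigma(k)}$, $B^{\sigma(k)+1}$ along $I_k$ gives $\rho(k)\le\sigma(k)<\sigma(k)+1\le\rho(k+1)$ and $\rho^{-1}(\sigma(k))\le k<k+1\le\rho^{-1}(\sigma(k)+1)$, and these inequalities, together with the Baxterness of $\rho$, force $\sigma(k)=j_k$. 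Your ``combined-order alternation'' is exactly this $\tsw$-mirror of Proposition~\ref{the:r2s_1} in disguise, so it is not an independent ingredient needing separate justification. Two minor further points: the composition of the two reflections of Proposition~\ref{obs:sym-orders} is a quarter-turn rotation, not an anti-diagonal reflection (though either symmetry does exchange your Cases~A and~B); and your closing deduction of S-equivalence from R-equivalence via Corollary~\ref{cor:injective} is correct.
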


Returning to the original papers on Baxter \ps\ (see for instance~\cite[Thm.~2]{boyce67}, or the
definition of complete \ps\ in~\cite[p.~180]{boyce81})
this means that the
combined diagram forms a (complete) Baxter permutation $\pi$. The points of
$R(P)$ form the reduced Baxter \p\ $\pi_o$ associated with $\pi$, and the
points of $S(P)$ are those
that are deleted from $\pi$  when constructing $\pi_o$.

\begin{proof}
Let $i \in [n]$. Denote $\sigma=S(P)$ and $j=\sigma(i)$.
Then the segment $I_i$ labeled $i$ in the $\tnw$~order, is labeled $j$
in the $\tsw$~order. We denote by $A_k$ (resp. $B^k$) the $k$th \emm
rectangle, in the $\tnw$- (resp. $\tsw$-) order.
We wish to prove that $j=j_i$.

Assume first that $I_i$ is horizontal.
By Observation~\ref{the:r2s_1}, the rightmost 
 rectangle whose lower side is contained in $I_i$ is $A_i$, and the
 leftmost
 rectangle whose upper side is contained in $I_i$ is $A_{i+1}$
 (Fig.~\ref{fig:r2s_3}).

By definition of $\rho$, we have $A_k=B^{\rho(k)}$ for all $k$.
By symmetry, since
$I_i$
is the $j$th segment in the $\tsw$ order,
the rightmost 
 rectangle whose
 upper side is contained in $I_i$ is $B^{j}$, and the leftmost
 rectangle whose lower side is contained  in $I_i$ is  $B^{j+1}$.
There holds
$A_{i+1} \tsw B^{j} \esw B^{j+1} \tsw A_i$ and $B^{j+1} \tnw A_i \enw
A_{i+1} \tnw B^{j}$.
By definition of $\rho=R(P)$, this means
$\rho(i+1) \leq j < j +1 \leq \rho(i)$ and $\rho^{-1}(j+1) \leq i < i+1 \leq \rho^{-1}(j)$.
This shows that $j$ coincides with the value $j_i$ of Definition~\ref{the:bax}
(for the case $\rho(i)>\rho(i+1)$).

The case where $I_i$ is vertical is similar, and corresponds to an
ascent in $\rho$.
\end{proof}

\begin{figure}[ht]
\begin{center}
\resizebox{50mm}{!}{\includegraphics{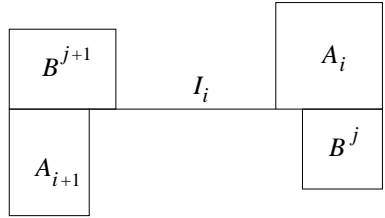}}
\caption{Illustration of the proof of Theorem~\ref{the:r2s}.}
\label{fig:r2s_3}
\end{center}
\end{figure}

The symmetry in the definition of $j_i$ makes the following property
obvious, without going through floorplans.
\begin{Corollary}
Let $P$ be a floorplan and let $\rho=R(P)$ be the corresponding
Baxter \p.
  Let us abuse notation by denoting $S(\rho):=S(P)$. If
  $\rho'$ is obtained by applying to $\rho$ a symmetry of the square,
  then the same symmetry, applied to $S(\rho)$, gives $S(\rho')$.
\end{Corollary}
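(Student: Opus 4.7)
The plan is to verify the claim directly from Definition~\ref{the:bax} on a generating pair of the symmetry group $D_4$ of the square, say the vertical-axis reversal $\rho^r(i)=\rho(n+2-i)$ and the horizontal-axis complement $\rho^c(i)=n+2-\rho(i)$. Once the claim is known for these two generators, it extends to every $\phi\in D_4$ by composition.

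For the reversal, I would substitute $k'=n+2-k$ in Definition~\ref{the:bax}: the constraint $k\le i$ becomes $k'\ge n+2-i$, an ascent of $\rho^r$ at $i$ is a descent of $\rho$ at $n+1-i$ and vice versa, and $\rho^r(i+1)$ maps to $\rho(n+1-i)$. A one-line substitution then gives $j_i^r=j_{n+1-i}$, so $S(\rho^r)$ is $S(\rho)$ read right-to-left, matching the reversal action on $[n]$.

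For the complement, the substitution $\rho(k)\mapsto n+2-\rho(k)$ again flips the two branches of Definition~\ref{the:bax} and exchanges $\max\leftrightarrow\min$ and $<\,\leftrightarrow\,>$. Assuming a descent of $\rho$ at $i$, one computes
\[
j_i^c \;=\; n+2-\min\{\rho(k):k\le i,\ \rho(k)>\rho(i+1)\},
\]
and the target identity $j_i^c = n+1-j_i$ reduces to
\[
\min\{\rho(k):k\le i,\ \rho(k)>\rho(i+1)\} \;=\; \max\{\rho(k):k\ge i+1,\ \rho(k)<\rho(i)\}+1.
\]
Writing $v$ for the right-hand max and $w$ for the left-hand min, one has $w\le v+1$ by a quick case analysis on the position of the value $v+1$ (placing it in columns $\le i$ forces $w\le v+1$ directly, while placing it in columns $\ge i+1$ contradicts the maximality of $v$). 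The reverse inequality $w>v$ is where the Baxter hypothesis enters: a witness $\rho(k_0)=w$ and $\rho(k_1)=v$ with $k_0\le i<i+1\le k_1$ would satisfy $\rho(i+1)<w<v<\rho(i)$ and hence form an occurrence of the forbidden pattern $2\mn 41\mn 3$ in $\rho$, a contradiction. The main obstacle I anticipate is precisely this last step: the ascent/descent symmetry of Definition~\ref{the:bax} makes the reversal case essentially automatic, but matching maxima to minima under the complement is a genuinely Baxter phenomenon and cannot be deduced purely formally.
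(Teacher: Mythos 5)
Your computations for the reverse and for the complement are correct as far as they go (the reverse is indeed a formal substitution in Definition~\ref{the:bax}, and your identity $\min\{\rho(k):k\le i,\ \rho(k)>\rho(i+1)\}=\max\{\rho(k):k\ge i+1,\ \rho(k)<\rho(i)\}+1$ is exactly the point where the Baxter hypothesis is needed, via the forbidden pattern $2\mn 41\mn 3$). But there is a genuine gap at the very first step: the reverse $\rho^r$ and the complement $\rho^c$ do \emph{not} generate the symmetry group of the square. They are reflections in two \emph{perpendicular} axes, so they generate only the Klein four-subgroup $\{\mathrm{id},\, r,\, c,\, rc\}$ of order $4$ inside the dihedral group of order $8$. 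The missing coset contains the inverse (reflection in the main diagonal), the reflection in the anti-diagonal, and the two quarter-turn rotations, and none of these is a composition of $r$ and $c$. The inverse case is arguably the whole point of the corollary, because Definition~\ref{the:bax} is stated asymmetrically in positions versus values, so the identity $S(\rho^{-1})=S(\rho)^{-1}$ is precisely what is not formally automatic; to generate the full group you need, e.g., the reverse together with the inverse (this is the generating pair the paper itself uses in the corollary following Proposition~\ref{the:av2}).

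The good news is that the identity you prove for the complement already contains most of what is needed. It shows that $j_i$ is characterized two-sidedly by $\rho(i+1)\le j_i<j_i+1\le\rho(i)$ together with $\rho^{-1}(j_i+1)\le i<i+1\le\rho^{-1}(j_i)$ in the descent case (your $v$ is attained at a position $\ge i+1$ and your $w=v+1$ at a position $\le i$), and this characterization, being manifestly symmetric under exchanging rows and columns, handles the inverse as well once you check that such a $j$ is unique. This symmetric characterization is in fact what the paper establishes in the proof of Theorem~\ref{the:r2s} and what it alludes to when it says that ``the symmetry in the definition of $j_i$'' makes the corollary obvious; the paper gives no further proof. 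So your route is a legitimate, more explicit verification, but as written it only proves the statement for half of the symmetry group, and you must either add the inverse case or replace your generating set.
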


\nid {\bf Remark.} The combined diagram is actually the R-permutation
of a floorplan of size $2n+1$. Indeed, let $P$ be a
floorplan of size $n+1$.
If we
inflate segments of $P$ into narrow rectangles,
we obtain a new
floorplan of size $2n+1$, which we denote by $\tilde{P}$ (Fig.~\ref{fig:inflated-segments}).
Observe that a rectangle of $\tilde{P}$ corresponding to a rectangle
$A$ of $P$ has
a unique above (respectively, right, below, left) neighbor,
which corresponds to the segment of $P$
that contains the above (respectively, right, below, left) side of~$A$.

\begin{figure}[ht]
\begin{center}
\resizebox{50mm}{!}{\includegraphics{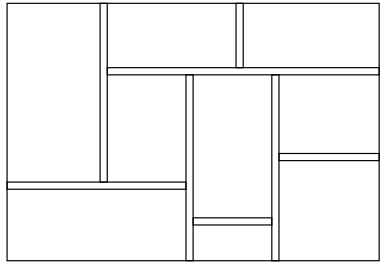}}
\caption{Inflating the segments of a floorplan.}
\label{fig:inflated-segments}
\end{center}
\end{figure}

 It follows from Observation~\ref{the:r2s_1} and Fig.~\ref{fig:r2s_1} that
 the $\tnw$ order in $\tilde P$ is $A_1  I_1 A_2  I_2
 \ldots A_n I_n A_{n+1}$. It is thus obtained by shuffling the
 $\tnw$ orders for rectangles and segments of $P$. Symmetrically, the
 $\tsw$ order in  $\tilde P$ is $A_{\rho^{-1}(1)}  I_{\sigma^{-1}(1)}
 \cdots A_{\rho^{-1}(n)}  I_{\sigma^{-1}(n)} A_{\rho^{-1}(n+1)} $.
 Thus the combined diagram of $R(P)$ and $S(P)$, as in
Fig.~\ref{fig:ex1_rec}, coincides with the graph of
$R(\tilde{P})$.

%=========================================================
\subsection{Floorplans that produce the same S-permutation}
\label{sec:same-S}
%=========================================================
In this section we characterize in terms of their R-\ps\ the
floorplans that have the same S-\p.
This will play a central role in the enumeration of S-\ps.

We first describe the floorplans whose S-permutation is
$123\dots n$. Such floorplans will be called
\emph{ascending F-blocks}.\footnote{The letter F refers to Fibonacci,
  for reasons that will be explained further down.} 
It is easy to see that in an ascending F-block,
all vertical segments extend from the lower to the upper side of the
boundary,
and there is at most one horizontal segment between a pair of adjacent vertical segments
(this can be shown inductively, by noticing that at most one horizontal
segment starts from the left side of the bounding rectangle). See
Fig.~\ref{fig:fib0}.
Conversely, every floorplan of this type has S-permutation
$123\ldots n$.
Therefore, an ascending F-block consists of several rectangles that
extend from the lower to the upper side of the boundary,
some of them being split into two sub-rectangles by a horizontal segment.
 The corresponding R-permutations are
those that satisfy
$|\rho(i)-i|\leq 1$ for all $1\leq i \leq n+1$. The number of
 ascending F-blocks of size $n+1$ (and, therefore, the number of such
 permutations) is the Fibonacci number $F_{n+1}$ (where
 $F_0=F_1=1$). 

\begin{figure}[h]
$$\resizebox{140mm}{!}{\includegraphics{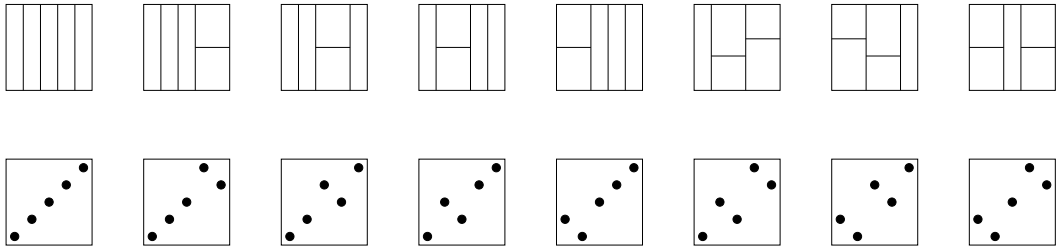}}$$
\caption{The 8 ascending F-blocks for $n=4$, and their R-permutations.}
\label{fig:fib0}
\end{figure}

A similar observation holds for the floorplans whose
S-permutation is $n\dots 321$. Such floorplans are called
\emph{descending F-blocks}. In descending F-blocks,
all horizontal segments extend from the left
side
to the right side of the boundary,
and there is at most one vertical segment between a pair of adjacent horizontal segments.
In other words, descending F-blocks consist of several rectangles that
extend from the left to the right side of the boundary,
some of them being split into two sub-rectangles by a vertical segment.
The corresponding R-permutations
are characterized by the condition
$|\rho(i) - (n + 2 -i)| \leq 1$
for all $1\leq i \leq n+1$.

For an F-block $F$, the size of $F$ (that is, the number of
rectangles) will be denoted by $|F|$.
If $|F|=1$, we say that $F$ is a \emph{trivial} F-block.
Note that if $|F|\leq 2$, then $F$ is both ascending and descending,
while if $|F|\geq 3$, then its type (ascending or descending) is
uniquely determined.

Let $P$ be a floorplan.
We define an \emph{F-block in $P$} as a set of rectangles of $P$ whose union is an F-block, as defined above.
In other words, their union is a rectangle,
and the S-permutation of the induced subpartition is either $123\dots$ or $\dots 321$.
The F-blocks of $P$ are partially ordered by inclusion.
Since segments of $P$ do not cross, a rectangle in $P$ belongs
precisely to one maximal F-block (which may be of size $1$).
So there is a uniquely determined partition of $P$ into maximal
F-blocks (Fig.~\ref{fig:fblocks}, left).

A \emph{block} in a \p\ $\rho$ is an interval
$[i,j]$ such that the values $\{\rho(i), \ldots, \rho(j)\}$ also form
an interval~\cite{albert-atkinson}. By extension, we also call a block
the corresponding set of points in the graph of $\rho$.
Consider $\ell$ rectangles in $P$ that form an ascending (respectively,
descending) F-block. By Observation~\ref{the:r2s_1}
and the analogous statement for the $\tsw$ order, these $\ell$ rectangles
form an interval
in the $\tnw$ and $\tsw$~orders. Hence the corresponding
$\ell$ points of the graph of $R(P)$ form a block,
and their inner order is isomorphic to a permutation $\tau$ of
$[\ell]$ that satisfies $|\tau(i)-i|\leq 1$ (respectively, $|\tau(i) -
(\ell +1 -i)| \leq 1$) for all $1\leq i \leq \ell$.

The converse is also true:
If $\ell$ points of the graph of $R(P)$ form an $\ell \times \ell$
block, and their inner order is isomorphic to a permutation
$\tau$ of $[\ell]$ that satisfies $|\tau(i)-i|\leq 1$ (respectively,
$|\tau(i) - (\ell +1 -i)| \leq 1$) for all $1\leq i \leq \ell$, then
the corresponding rectangles in $P$ form an ascending (respectively,
descending) F-block.
Indeed, let $H$ be such an ascending block in the graph of $R(P)$.
Let us partition the points of $H$ in singletons (formed of points
that lie on the diagonal) and pairs (formed of  transposed points
at adjacent positions).
Let $Q_1, Q_2, \dots$ be the
 parts of this partition,
read from the SW to the NE corner of $H$.
For each $i=1, 2, \dots$, the point(s) of $Q_{i+1}$
are the only NE-neighbors of the point(s) of $Q_{i}$, and, conversely,
the point(s) of $Q_{i}$ are the only SW-neighbors
of the point(s) of $Q_{i+1}$.
Therefore, by the remark that follows Observation~\ref{the:nei_dia},
the left side of the rectangle(s) corresponding to the point(s) of $Q_{i+1}$
coincides with the right side of the rectangle(s) corresponding to the point(s) of $Q_{i}$.
If $Q_i$ consists of two points then we have two rectangles whose union is a rectangle
split by a horizontal segment.
The argument is similar for a descending block.

Therefore, such
blocks in the graph of $\rho$ will be also called ascending (respectively,
descending) F-blocks.
Fig.~\ref{fig:fblocks} shows a floorplan with maximal F-blocks denoted by bold lines, and the F-blocks in the corresponding permutation $R(P)$ (the graph of $S(P)$ is also shown).

\begin{figure}[h]
$$\resizebox{120mm}{!}{\includegraphics{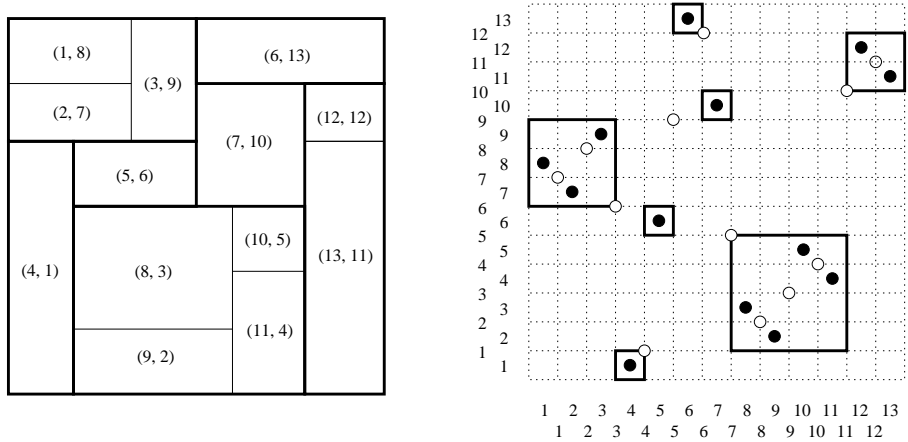}}$$
\caption{Maximal F-blocks
in floorplans and in \ps.}
\label{fig:fblocks}
\end{figure}

Let $F_1, F_2, \dots$ be all the maximal F-blocks in the graph of
$\rho$ (ordered from
left to right).
For $i\ge 1$, let $[y_i, y'_i]$ be the interval of values $\rho(j)$ occurring in
 $F_i$, and define
$d_i:=+$ if $F_i$ is ascending, and $d_i:=-$ if $F_i$ is descending
($d_i$ is left undefined if $F_i$ has size 1 or 2).
 The \emph{F-structure}
of $\rho$ is the sequence $\hat{F}_1, \hat{F}_2, \dots$,
where $\hat{F}_i = ([y_i, y'_i], d_i)$.
For example, the F-structure of the permutation
in Fig.~\ref{fig:fblocks} is
\[
\left( [7,9], +\right),  \ \ \left([1]\right), \ \ \left([6]\right),
\ \  \left([13]\right),  \ \ \left([10]\right),  \ \
\left([2,5],+\right) ,\ \  \left([11,12]\right).
\]

\begin{Theorem}
\label{the:fib}
Let $P_1$ and $P_2$ be two floorplans with $n$ segments.
Then $S(P_1) = S(P_2)$
if and only if $R(P_1)$ and $R(P_2)$ have the same F-structure.
\end{Theorem}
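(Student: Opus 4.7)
The plan is to exploit Theorem~\ref{the:r2s}, which provides the explicit formula $\sigma(i) = j_i$ for $\sigma = S(P)$ in terms of $\rho = R(P)$.

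For the implication ``same F-structure $\Rightarrow$ same S-permutation'', I first observe that two Baxter permutations with the same F-structure are linked by a sequence of \emph{elementary swaps}: each exchanges $\rho(j)$ and $\rho(j+1)$ where positions $j, j+1$ lie in the same F-block and $|\rho(j) - \rho(j+1)| = 1$. Indeed, within an ascending F-block of size $\ell$, the arrangements of the values $[y, y+\ell-1]$ satisfying $|\tau(i)-i| \le 1$ are precisely the patterns obtained from the identity by inserting disjoint adjacent transpositions, and any two such patterns are connected by single-transposition moves; the descending case is symmetric. I then verify that an elementary swap at $(j, j+1)$ leaves every $j_i$ unchanged. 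For $i \notin \{j-1, j, j+1\}$ the swap neither alters $\{\rho(k) : k \le i\}$ as a multiset nor any individual value in the max defining $j_i$. For $i \in \{j-1, j, j+1\}$ a short case analysis, depending on the type of the F-block and whether position $i$ exhibits an ascent or a descent, shows that the max is preserved, because the two exchanged values are consecutive integers sitting at two adjacent positions inside the F-block's value interval.

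For the converse, ``same S-permutation $\Rightarrow$ same F-structure'', I use Proposition~\ref{the:bij2}: every floorplan $P$ with $S(P) = \sigma$ is obtained, up to S-equivalence, from that construction, whose only freedom is the choice of direction for segments through \emph{weak} points of $\sigma$ (those forming maximal monotone runs of adjacent-value, adjacent-position points). I claim that any two legal choices produce floorplans whose R-permutations differ by a sequence of elementary swaps. Granting this, both R-permutations share the same F-structure: an elementary swap exchanges two consecutive values at two consecutive positions, and by maximality of F-blocks these positions necessarily lie within a single F-block, so the swap preserves the decomposition into F-blocks together with their directions.

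The crux is therefore the last claim: that toggling one legal direction choice at a weak point (respecting the ``not both horizontal'' rule within an ascending run, and dually for a descending run) changes $R(P)$ by a single elementary swap. I would establish this by a local analysis of Proposition~\ref{the:bij2}, showing that only the two rectangles adjacent to the modified weak point are affected, and that they interchange their places in the $\tsw$-order (or equivalently, in the $\tnw$-order), producing exactly an adjacent-value transposition in $\rho = R(P)$. The hard part is to track this local rearrangement when the weak point lies at the junction with strong points, and to handle F-blocks of size $1$ or $2$ (whose direction is formally undefined) without ambiguity in the correspondence between monotone runs of $\sigma$ and F-blocks of $\rho$.
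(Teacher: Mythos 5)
Your ``if'' direction is sound and is essentially an explicit verification of what the paper dispatches in one sentence via Theorem~\ref{the:r2s}: connecting two Baxter permutations with the same F-structure by adjacent swaps inside F-blocks and checking that each swap preserves every $j_i$ is a legitimate, if more laborious, way to see that the sequence $(j_1,\dots,j_n)$ depends only on the F-structure.

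The ``only if'' direction, however, has a genuine gap. You reduce it to the claim that every floorplan $P$ with $S(P)=\sigma$ ``is obtained, up to S-equivalence'' from the construction of Proposition~\ref{the:bij2}. Up to S-equivalence that claim is trivially true and trivially useless: all floorplans with $S(P)=\sigma$ form a single S-equivalence class, so one output of the construction already represents them all. What your argument actually needs is coverage up to \emph{R-equivalence}: every R-equivalence class contained in the S-class of $\sigma$ must contain a floorplan produced by the construction for some legal choice of directions at weak points. Proposition~\ref{the:bij2} is a pure existence statement and provides no such coverage, and since the whole point of the theorem is to distinguish the R-classes sitting inside one S-class, one cannot quotient by S-equivalence at this step. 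Supplying this coverage is essentially where the paper does its work: Lemma~\ref{the:weak} (a point of $S(P)$ is weak if and only if it lies inside a maximal F-block of $R(P)$) makes the location and type of all non-trivial F-blocks intrinsic to $\sigma$, and a separate contradiction argument (Appendix~\ref{app:fib}) pins down the trivial F-blocks. On top of this, your declared crux --- that toggling one weak-point direction changes $R(P)$ by a single adjacent-position, adjacent-value transposition --- is asserted rather than proved; changing the direction of $K_i$ changes which neighbors bound it and hence the shape of several rectangles, so the claim that exactly two rectangles exchange places in the $\tsw$~order requires real work. As written, the converse implication is not established.
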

In other words, $S(P_1) = S(P_2)$ if and only if
$R(P_1)$ and $R(P_2)$ may be obtained from each other
by replacing some F-blocks $F_1, F_2, \dots$
with, respectively, F-blocks $F'_1, F'_2, \dots$,
where $F_i$ is S-equivalent to $F'_i$ for all $i$.

\begin{proof}
The ``if''  direction is easy to prove.
Assume  $R(P_1)$ and $R(P_2)$ have the same F-structure.
In view of the way one obtains $S(P)$ from $R(P)$ (Theorem~\ref{the:r2s}),
we have $S(P_1) = S(P_2)$. Observe in particular that inside a
maximal F-block of $R(P)$, the points of $S(P)$ are organized on the
diagonal (in the ascending case) or the anti-diagonal (in the
descending case).

In order to prove the ``only if'' direction, we will first relate, for
a point of $S(P)$, the fact of being inside a maximal F-block to the
property of being weak.
(Recall that
a point $N_i$ in the graph of $S(P)$ is weak if it has at most one
neighbor in each of the directions NW, NE, SE, SW, and strong otherwise.)
 If a maximal F-block of  $R(P)$  occupies the area $[x, x'] \times
[y, y']$, then the point $N_i=(i,j)$ is inside this block if
$x \leq i < x'$ and $y \leq j < y'$.
For example, in Fig.~\ref{fig:fblocks} six points in the graph of $S(P)$
(the white points in the combined diagram)
are inside a maximal F-block: $(1,7)$, $(2,8)$, $(8,2)$, $(9,3)$, $(10,4)$ and $(12, 11)$.
Observe that the notion of ``being inside'' a maximal F-block is
\emph{a priori} relative to
$R(P)$. However, the following proposition shows that it is an
intrinsic notion, depending on $S(P)$ only.

\begin{Lemma}\label{the:weak}
Let $N_i$ be a point in the graph of $\sigma = S(P)$.
Then $N_i$ is inside a maximal F-block of $R(P)$ if and only if it is a weak
point of $S(P)$.
\end{Lemma}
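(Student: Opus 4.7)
The plan is to prove the two directions of the equivalence separately, relying on the geometric structure of ascending/descending F-blocks for the forward implication and on Theorem~\ref{the:r2s} for the converse.

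\textbf{Direction ($\Leftarrow$).} Suppose $N_i$ is inside a maximal F-block $F$ of $R(P)$, which I assume to be ascending (the descending case is symmetric) with R-region $[x,x']\times[y,y']$ and size $\ell=x'-x+1$. The geometric realization of $F$ in $P$ is a sub-floorplan of ascending-F-block type: a tiling of the bounding rectangle of $F$ into $c$ full-height columns, some of which are split into two sub-rectangles by a single horizontal segment. Consequently, the $\ell-1$ internal segments of $F$ are of exactly two types: \emph{column-separators} (vertical, spanning the full height of $F$) and \emph{column-splitters} (horizontal, spanning exactly the width of their column). I would then verify directly, for each of the two types, that the segment $I_i$ has exactly one neighbor in each of the four directions: for a column-separator, the above- and below-neighbors are the unique horizontal segments containing the top and bottom sides of $F$, while the left- and right-neighbors are uniquely determined by whether each adjacent column is split (in which case the splitter is the unique neighbor) or not (in which case the adjacent column-separator is); for a column-splitter, the left- and right-neighbors are the two vertical segments bounding its column, and the above- and below-neighbors are again the top and bottom of $F$. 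By Observation~\ref{the:nei_dia}, this makes $N_i$ a weak point.

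\textbf{Direction ($\Rightarrow$).} Conversely, suppose $N_i$ is weak. By Paragraph A of the proof of Proposition~\ref{the:bij2}, $N_i$ lies in a unique maximal chain of adjacent weak points $N_a,N_{a+1},\ldots,N_b$, either ascending ($\sigma(j+1)=\sigma(j)+1$ throughout) or descending; the case $a=b$ (isolated weak point) is allowed. Assume without loss of generality that the chain is ascending, and set $y:=\sigma(a)$, $x:=a$, $x':=b+1$, $y':=\sigma(b)+1$. I would prove that $[x,x']\times[y,y']$ is a maximal ascending F-block of $R(P)$ that contains $N_i$ strictly inside. The block property $\rho([x,x'])=[y,y']$, together with the near-identity constraint $|\tau(k)-k|\le 1$ for the induced pattern $\tau$, is derived from Theorem~\ref{the:r2s}: for each $j\in[x,x'-1]$, the value $\sigma(j)=j_j$ is computed from the pair $(\rho(j),\rho(j+1))$, and the adjacency $\sigma(j+1)=\sigma(j)+1$ together with the weakness of $N_j$ (translated, via Observation~\ref{the:nei_one}, into constraints on the type and neighbors of $I_j$) forces $\rho(j),\rho(j+1)\in[y,y']$ with $|\rho(j)-(y+j-x)|\le 1$. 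Maximality on both sides follows from the fact that weakness or adjacency fails at $N_{a-1}$ or $N_{b+1}$, which in turn forces $\rho(x-1)$ and $\rho(x'+1)$ to lie outside $[y,y']$.

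The main obstacle is the verification in direction ($\Rightarrow$): one must extract the global F-block structure from purely local weakness information, and in particular rule out at every step that $\rho(j)$ or $\rho(j+1)$ lies outside $[y,y']$. Such a violation would create an extra neighbor of $N_j$ in one of the four diagonal directions, contradicting the weakness of $N_j$; translating this impossibility back and forth between the combined diagram and the floorplan, while keeping the bookkeeping of ascending versus descending sub-cases consistent, is where the proof is most delicate.
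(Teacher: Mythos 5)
Your proof that points inside a maximal F-block are weak takes a genuinely different route from the paper's. You verify neighbor-uniqueness geometrically on the floorplan, using the column structure of an ascending F-block, whereas the paper argues by contradiction entirely in the combined diagram, using Theorem~\ref{the:r2s} and the Baxter property of $\rho$ to show that a strong point inside a block would force an impossible configuration of points of $\rho$. Your route is workable and arguably more transparent, but it silently uses two facts that should be made explicit: (i) the $\ell-1$ points of $S(P)$ inside an $\ell\times\ell$ F-block of $R(P)$ correspond precisely to the $\ell-1$ internal segments of the geometric F-block (this follows from Proposition~\ref{the:r2s_1} and the discussion in Section~\ref{sec:same-S}); and (ii) a column-separator, as a segment of the \emph{whole} floorplan $P$, spans exactly the height of the block and therefore cannot acquire neighbors from outside the block --- this requires observing that the top and bottom edges of the block are each contained in a single segment of $P$ (or in the boundary), so that no vertical segment of the block can extend past them.

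The genuine gap is in the other direction. Its entire content is the claim that weakness of $N_j$ ``forces $\rho(j),\rho(j+1)\in[y,y']$ with $|\rho(j)-(y+j-x)|\le 1$,'' and you assert this rather than prove it, explicitly deferring it as the delicate part; but this is precisely the theorem. The paper proves it by a case analysis in the combined diagram: starting from $\rho(i)\le j<\rho(i+1)$ and $\rho^{-1}(j)\le i<\rho^{-1}(j+1)$ (Theorem~\ref{the:r2s}), it uses the \emph{Baxter property of $\rho$} --- not just Theorem~\ref{the:r2s} --- to force $\rho(i-1)\le j$ (otherwise $\rho^{-1}(j),\,i-1,\,i,\,\rho^{-1}(j+1)$ is an occurrence of $2\mn41\mn3$), hence $\sigma(i-1)\le j-1$, and then either exhibits two explicit SW-neighbors ($N_{i-1}$ and $N_{\sigma^{-1}(j-1)}$) or pins down $\rho(i-1)=j$, $\rho(i)=j-1$; repeating on the other side yields either two NE-neighbors or a $2143$-type block containing $N_i$. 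Your sketch never invokes the Baxter property of $\rho$, and without it the implication ``a deviation of $\rho(j)$ creates a second neighbor of $N_j$'' does not follow. Moreover, as stated your claim cannot be a purely local consequence of the weakness of the single point $N_j$: the bounds $y=\sigma(a)$ and $y'=\sigma(b)+1$ are global to the chain, and $j_j$ in Definition~\ref{the:bax} depends on all of $\rho(1),\dots,\rho(j)$, not on the pair $(\rho(j),\rho(j+1))$ alone, so the argument must be organized as an induction along the chain. Note finally that maximality of the constructed block need not be proved at all: being inside any F-block implies being inside the maximal one containing it.
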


This lemma is proved in Appendix~\ref{app:weak}, and the rest of the
 theorem  in Appendix~\ref{app:fib}.
\end{proof}

%========================================
\section{Enumeration of  $(2\mn14\mn3, 3\mn41\mn2)$-avoiding  permutations}
\label{sec:enum}
%=======================================================
%

It follows from  Theorem~\ref{the:fib} that S-permutations of size $n$ are in bijection with
Baxter permutations of size $n+1$ in
which all maximal ascending F-blocks are increasing (that is, order isomorphic to a permutation of the form $123\ldots m$), and all
maximal descending F-blocks of size at least 3 are decreasing.
A Baxter \p\ that does not satisfy these conditions has at least one \emm improper pair,.
\begin{Definition}
  Let $\rho$ be a Baxter permutation. Two points of the diagram of $\rho$ that
  lie in adjacent rows and columns form an \emm improper pair, if
  they form a descent in a maximal ascending F-block, or an ascent in a maximal
  descending F-block of size at least 3.
\end{Definition}
This definition is illustrated in Fig.~\ref{fig:improper}. Observe that a point belongs to at most one improper pair. In
particular, a \p\ of size $n+1$ has at most $\lfloor \frac
{n+1}2\rfloor$ improper pairs.

\begin{figure}[h]
$$\includegraphics[width=90mm]{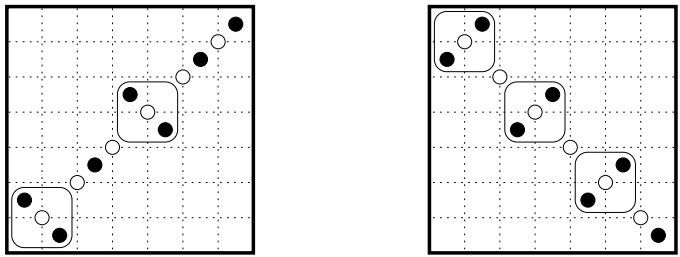}$$
  \caption{Improper pairs in maximal F-blocks.}\label{fig:improper}
\end{figure}

\begin{Proposition}\label{prop:enum}
  Let
$$
b_n= \sum_{m=0}^n\frac 2{n(n+1)^2}   {{n+1} \choose m}{{n+1} \choose {m+1}}
{{n+1} \choose {m+2}}
$$
be the number of Baxter \ps\ of size $n$ (see~{\rm\cite{chung}}).
The number $a_n$ of $(2\mn14\mn3, 3\mn41\mn2)$-avoiding permutations of size $n$ is
$$
a_n=\sum_{i=0}^{\lfloor(n+1)/2\rfloor} (-1)^i \binom{n+1-i}{i} b_{n+1-i}.
$$
\end{Proposition}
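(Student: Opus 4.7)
The plan is to establish the generating function identity
\[
x\,A(x)+1 \;=\; B\!\bigl(x(1-x)\bigr),
\]
where $A(x)=\sum_{n\ge 0} a_n x^n$ and $B(x)=\sum_{n\ge 0} b_n x^n$, with $b_0 := 1$. Extracting the coefficient of $x^{n+1}$ on both sides gives
\[
a_n = [x^{n+1}]\,B\!\bigl(x(1-x)\bigr) = \sum_{m\ge 1} b_m\,[x^{n+1}]\!\bigl(x^m(1-x)^m\bigr) = \sum_{i\ge 0}(-1)^i\binom{n+1-i}{i}\,b_{n+1-i},
\]
which is the claimed formula; the sum truncates at $i=\lfloor(n+1)/2\rfloor$ because $\binom{n+1-i}{i}$ vanishes beyond that.

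To prove the identity, I would rely on Theorem~\ref{the:fib}: $a_n$ is the number of Baxter permutations of $[n+1]$ having no improper pair, and two Baxter permutations give the same S-permutation iff they share the same F-structure. For a fixed F-structure with maximal F-blocks of sizes $m_1,\ldots,m_r$, the number of Baxter permutations realising it is $\prod_i c_{m_i}$, with $c_1=1$, $c_2=2$ (the two patterns $12$ and $21$ of a size-$2$ F-block both belong to the same F-structure, since direction is undefined for $m_i\le 2$), and $c_k=F_k$ for $k\ge 3$, where $F_k$ (with $F_0=F_1=1$) counts ascending F-block patterns of size $k$. Since $(c_k)$ satisfies the Fibonacci recurrence $c_k=c_{k-1}+c_{k-2}$ for $k\ge 2$, its ordinary generating function is $C(y):=\sum_{k\ge 0} c_k y^k = 1/(1-y-y^2)$. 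The substitution $y\mapsto x(1-x)$ is precisely the algebraic transformation that reverts the block-indexed F-structure generating function into the permutation-size generating function $B(x)$, yielding the identity.

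The main obstacle is converting this generating-function derivation into a direct combinatorial proof, namely a sign-reversing involution on pairs $(\rho,P)$, with $\rho$ a Baxter permutation of size $m$ and $P\subseteq[m]$ of size $i$, carrying the weight $(-1)^i x^{m+i}$. The involution would act by contracting the leftmost improper pair of $\rho$ not captured by $P$, or conversely by expanding the leftmost mark of $P$ back into an improper pair, so that the fixed points are exactly the pairs $(\rho,\emptyset)$ with $\rho$ having no improper pair. A key auxiliary fact---that both contraction and expansion preserve the Baxter property---follows from the observation that a pair of points at adjacent columns carrying consecutive values cannot participate in a forbidden pattern $2\mn 41\mn 3$ or $3\mn 14\mn 2$: in each such pattern the two adjacent columns must carry the smallest and the largest of four distinct values, incompatible with a value-difference of $1$. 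The delicate part of implementing the involution is the treatment of size-$2$ F-blocks with ambivalent direction and of the possible F-block mergers under the local contraction/expansion move.
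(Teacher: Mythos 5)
Your overall strategy is the paper's own: reduce via Theorem~\ref{the:fib} to counting Baxter permutations of size $n+1$ with no improper pair, then apply inclusion--exclusion by contracting marked improper pairs into single marked points. Your coefficient extraction from $1+xA(x)=B(x(1-x))$ is correct, and your auxiliary observation --- that two position-adjacent, value-consecutive points cannot occupy two roles of a vincular occurrence of $2\mn 41\mn 3$ or $3\mn 14\mn 2$ (any two roles that can be position-adjacent have values differing by at least $2$) --- is exactly the lemma needed to show that contraction and expansion preserve the Baxter property. So the mechanism matches the paper's proof, which phrases the cancellation as a direct count $b_{n+1,i}=\binom{n+1-i}{i}b_{n+1-i}$ rather than as a sign-reversing involution; the two formulations are equivalent.

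However, there are two genuine problems. First, your Fibonacci route is not a proof and I do not see how to complete it: the sizes of the maximal F-blocks of a Baxter permutation are not free parameters that can be resummed by a substitution $y\mapsto x(1-x)$ --- they are rigidly determined by the weak runs of the associated S-permutation (Lemma~\ref{the:weak}), so the ``block-indexed F-structure generating function'' does not factor in the way your sentence suggests. Second, and more importantly, the step you defer as ``the delicate part'' is precisely where all the work lies. To get $b_{n+1,i}=\binom{n+1-i}{i}b_{n+1-i}$ (equivalently, to make your involution cancel), you must show that \emph{every} point of \emph{every} Baxter permutation admits \emph{exactly one} expansion into an improper pair yielding a Baxter permutation in which that pair is indeed improper. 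This is nontrivial because contraction can merge maximal F-blocks (the paper notes that $1342$ has two maximal F-blocks but contracts to the single F-block $132$), so ``improperness'' is not stable under the move; the paper resolves this with a three-case analysis according to whether the marked point lies on the diagonal of an ascending maximal block, on the anti-diagonal of a descending one, or off the (anti-)diagonal of a block of size at least $3$. Without that analysis, neither surjectivity nor injectivity of the contraction map is established, and the formula does not follow.
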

\begin{proof}
We have just explained that $(2\mn14\mn3, 3\mn41\mn2)$-avoiding
permutations of size $n$ are in bijection with Baxter \ps\ of size
$n+1$ having no improper pair. By the inclusion-exclusion principle,
$$
a_n= \sum_{i \ge 0} (-1)^ib_{n+1,i},
$$
where $b_{n+1,i}$ is the number of Baxter \ps\ of size $n+1$ with
$i$ marked improper pairs. Let $\rho$ be such a \p, and
contract every marked improper pair into a single (marked) point: this gives a Baxter
\p\ $\rho'$ of size $n+1-i$, with $i$ marked points.

Observe that if two points of $\rho$ are in the same maximal F-block,
then their images, after contraction, are in the same maximal F-block
of $\rho'$.  (The converse is false: the permutation 1342 has two
maximal F-blocks and one improper pair (consisting of the values
34). By contracting this pair, one obtains the permutation 132, which
is an F-block.)

We claim that each Baxter \p\ of size $n+1-i$ with $i$ marked points
is obtained exactly once in our construction, and that the unique way
to expand each marked  point into an improper pair is the following:
\begin{itemize}
\item if the marked point lies on the diagonal of an ascending maximal
  F-block (of size $\ge 1$),
  replace it by a descending pair of adjacent  points,
\item if the marked point lies on the anti-diagonal of a descending maximal
  F-block of size $\ge 2$,
  replace it by an ascending pair of adjacent  points,
\item otherwise, observe that the block has size at least 3; if it is ascending
  (resp.~descending), and the marked point does not lie on the
  diagonal (resp.~anti-diagonal), replace it by an ascending
  (resp. descending)  pair of adjacent
  points.
\end{itemize}
Details are left to the reader.

This construction implies that  the number of Baxter \ps\ of size $n+1$ having
$i$ marked  improper pairs is $b_{n+1,i}=\binom{n+1-i}{i} b_{n+1-i}$,
and the proposition follows.
\end{proof}

\noindent{\bf Remarks}\\
1. Let $A(t)$ be the \gf\ of  $(2\mn14\mn3, 3\mn41\mn2)$-avoiding \ps,
and let $B(t)$ be the \gf\ of (non-empty) Baxter \ps. The above
result can be rewritten as
\begin{equation}\label{gf:sol}
A(t)= \sum_{k\ge 0} t^{k}(1-t)^{k+1} b_{k+1} =  \frac 1 t  B(t(1-t)).
\end{equation}
Observe that $t(1-t)=s$ if $t=sC(s)$, where  $C(s)=\frac{1-\sqrt{1-4s}}{2s}$ is the
\gf\ of Catalan numbers. Hence, conversely,
\begin{equation}\label{cat}
B(s)= sC(s)A(sC(s)).
\end{equation}
This suggests that another connection between S- and R-permutations,
involving Catalan numbers,  exists.

\smallskip
\nid
2. The form of $a_n$ and $b_n$ implies that $A(t)$ and $B(t)$ are \emm
D-finite,, that is, satisfy a
linear differential equation with
polynomial coefficients~\cite{lipshitz-diag,lipshitz-df}. In fact,
$$
-12 t
+6(1-2 t) B(t)
-2 t \left( -3+14 t+8 {t}^{2} \right)B'(t)
-{t}^{2} \left(
  t+1 \right)  \left( 8 t-1 \right) B''(t)=0
$$
and
\begin{multline*}
12  \left( t-1 \right)  \left( 2 t-1 \right) ^{3}
+(104 t-338 {t}^{
2}+512 {t}^{3}-294 {t}^{4}-110 {t}^{5}+192 {t}^{6}-48 {t}^{7}-12)A(t)\\
-2 t \left( t-1 \right)  \left( 40 {t}^{6}-128 {t}^{5}+89 {t}^{4}+
53 {t}^{3}-88 {t}^{2}+35 t-4 \right) A'(t)
-{t}^{2} \left( 2 t-1
 \right)  \left( 8 {t}^{2}-8 t+1 \right)  \left( {t}^{2}-t-1
 \right)  \left( t-1 \right) ^{2}A''(t)=0.
\end{multline*}
This implies
that the asymptotic behavior of the numbers $a_n$ and $b_n$ can be
determined almost automatically (see for instance~\cite[Sec.~VII.9]{fs}).
For Baxter \ps, it is known~\cite{shen_chu} that $b_n \sim 8^n n^{-4}$ (up to a
multiplicative constant, which can be determined thanks to standard
techniques for the asymptotics of sums~\cite{odlyzko-handbook}). For
$a_n$, we find $    a_n \sim  (4+2\sqrt 2)^n\, n^{-4}$.

\smallskip
\nid
3. For $1\le n\le 30$, the number of $(2\mn 14 \mn 3,
3\mn41\mn2)$-avoiding permutations of $[n]$ is given in the following
table,
which we have sent to the OEIS~\cite[A214358]{oeis}.
\begin{center} \begin{tabular}{c||c||c||c||c||c}
$1$     &$374$   &$929480$   &$4023875702$   &$23320440656376$   &$161762725797343554$   \\
$2$     &$1668$  &$4803018$  &$22346542912$  &$135126739754922$  &$963907399885885724$   \\
$6$     &$7744$  &$25274088$ &$125368768090$ &$788061492048436$  &$5769548815574513550$  \\
$22$    &$37182$ &$135132886$&$709852110576$ &$4623591001082002$ &$34679563373252224012$ \\
$88$    &$183666$&$732779504$&$4053103780006$&$27277772831911348$&$209275178482957838142$
\end{tabular}
\end{center}

%%%%%%%%%%%%%%%%%%%%%%%%%%%%%%%%%%%%%%%%%%%%%%%%%%%%%%%%%%%%%%%%%%%%
\section{The case of guillotine floorplans}
\label{sec:guil}
%%%%%%%%%%%%%%%%%%%%%%%%%%%%%%%%%%%%%%%%%%%%%%%%%%%%%%%%%%%%%%%%%%%%

In this section we study the restriction of the map $S$ to  an
important family of floorplan called \emm guillotine
floorplans,~\cite{cardei,  gz, stockmeyer}.

\begin{Definition}\label{def:guillotine}
A floorplan $P$ is a \emph{guillotine floorplan}
(also called~\emph{slicing floorplan}~\cite{l})
if either it consists of just one rectangle, or there is a segment in $P$ that extends from one side of the boundary
to the opposite side, and
splits $P$ into two
sub-floorplans that are also guillotine.
\end{Definition}

The restriction of the map $R$ to guillotine floorplans induces a
bijection between R-equivalence classes of guillotine floorplans and
\emm separable, permutations
(defined below)~\cite{abp}.
Here, we first characterize permutations that are
 obtained as S-permutations of guillotine floorplans, and then
 enumerate them.

%==============================================================
\subsection{Guillotine floorplans and separable-by-point permutations}
%==============================================================

A nonempty permutation $\sigma$ is \emph{separable}
 if  it has size 1, or its graph  can be
split into two nonempty blocks $H_1$ and $H_2$, which are
themselves separable.
Then,
either
all the points in $H_1$ are to the SW of all the points of $H_2$ (then
$\sigma$, as a separable permutation, has an \emph{ascending structure}),
or
all the points in $H_1$ are to the NW of all the points of $H_2$ (then
$\sigma$, as a separable permutation, has a \emph{descending structure}).
Separable permutations are known to
coincide with $(2\mn 4\mn 1\mn 3,
3\mn 1\mn 4\mn 2)$-avoiding permutations~\cite{bbl}.
In particular, they
form a subclass of Baxter \ps.
The number $g_n$ of separable permutations of $[n]$ is the
$(n-1)$st
\emph{Schr\"{o}der number}~\cite[A006318]{oeis},
and the associated \gf\ is:
\begin{equation}\label{schroeder}
G(t):=\sum_{n\ge 1} g_n t^n = \frac {1-t-\sqrt{1-6t+t^2}}{2}.
\end{equation}

 \begin{Definition}
  A permutation $\sigma$ of $[n]$ is \emph{separable-by-point} if
it is empty, or its graph  can be split into three
 blocks $H_1$, $H_2$, $H_3$ such that
\begin{itemize}
  \item[--] $H_2$ consists of one point $N$,
  \item[--] $H_1$ and $H_3$ are themselves separable-by-point (thus,
    they may be empty),
and
  \item[--] {either} all the points of $H_1$ are to the SW of $N$, and
    all the points of $H_3$ are to the NE of $N$ (then $\sigma$ has
    an  \emph{ascending structure}),
{or} all the points of
$H_1$ are to the NW of $N$ and all the points of
$H_3$ are to the SE of $N$ (then $\sigma$ has a \emph{descending
  structure}).
\end{itemize}
 \end{Definition}

The letter $N$ for the central block refers to the fact that we have
denoted by $N_i$ the point $(i, \sigma(i))$ of an
S-permutation
$\sigma$.
Observe also that $N$ necessarily corresponds to a
fixed point of $\sigma$ if $\sigma$ is ascending,
and to a point such that $\sigma(i)=n+1-i$ is $\sigma$ is descending
and has size $n$.

\begin{figure}[ht]
\begin{center}
\resizebox{80mm}{!}{\includegraphics{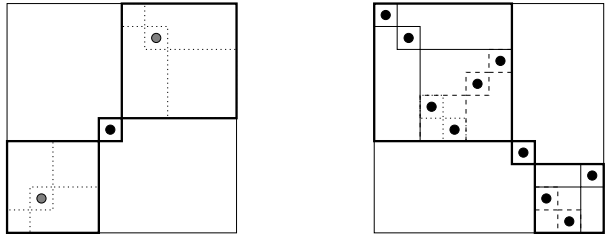}}
\caption{Separable-by-point permutations.}
\label{fig:seppoint}
\end{center}
\end{figure}

\vspace{2mm} See Fig.~\ref{fig:seppoint} for a schematic description
and an example of separable-by-point permutations.
For $n \leq 3$, all permutations are separable-by-point.
It is clear that if a nonempty permutation $\sigma$ is
separable-by-point, then it is separable.
The permutations $2143$ and $3412$ are separable, but not
separable-by-point.
The following result characterizes separable-by-point
permutations in terms of forbidden patterns.
In particular, it implies that these permutations are S-permutations.

\begin{Proposition}
 \label{the:sep_point_av}
Let $\sigma$ be a permutation of $[n]$.
Then $\sigma$ is separable-by-point if and only if it is \\
$(2 \mn 14 \mn 3, \ 3 \mn 41 \mn 2, \ 2 \mn 4 \mn 1 \mn 3, \ 3 \mn 1 \mn 4 \mn 2)$-avoiding.
\end{Proposition}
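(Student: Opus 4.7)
The plan is induction on $n := |\sigma|$, with the base case $n \leq 1$ trivial. For the forward direction, assume $\sigma$ is separable-by-point with top-level decomposition $(H_1, \{N\}, H_3)$ (ascending or descending). Since separable-by-point implies separable, the classical characterization~\cite{bbl} already gives avoidance of $2\mn 4\mn 1\mn 3$ and $3\mn 1\mn 4\mn 2$. For the remaining two patterns, any putative occurrence at positions $i<j<\ell<m$ with $\ell = j+1$ that lies entirely inside $H_1$ or entirely inside $H_3$ is killed by induction, so only a short list of ``straddling'' configurations remains. The adjacency $\ell = j+1$ forces the pair $(j,\ell)$ either to stay inside one block or to cross at $N$; each remaining sub-case is then ruled out by comparing a block-position constraint with a value inequality. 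For example, in the ascending case, the sub-case $j \in H_1,\ \ell = N$ forces $m \in H_3$, so $\sigma(m) > \sigma(N) = \sigma(\ell)$ contradicts the $2\mn 14\mn 3$ inequality $\sigma(m) < \sigma(\ell)$. The other sub-cases, including those for $3\mn 41\mn 2$ and for the descending structure, collapse identically.

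For the reverse direction, assume $\sigma$ avoids all four patterns and $n \geq 2$. Avoidance of the two classical separable patterns forces $\sigma$ to be separable, so $\sigma$ admits a top-level ascending or descending separable split into two blocks $H_1, H_2$. Both the four-pattern class (by the earlier corollary that symmetries of the square preserve $(2\mn 14\mn 3, 3\mn 41\mn 2)$-avoidance, combined with the analogous symmetry of the separable class) and the separable-by-point class are invariant under the symmetry group of the square, so after reflecting across a horizontal line if needed I may assume the ascending case, with $H_1$ entirely SW of $H_2$. Set $p := |H_1|$, so positions $1,\dots,p$ form $H_1$ (with values $\{1,\dots,p\}$) and positions $p+1,\dots,n$ form $H_2$ (with values $\{p+1,\dots,n\}$). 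The key claim is that either $\sigma(p) = p$ (the last entry of $H_1$ equals $\max H_1$) or $\sigma(p+1) = p+1$ (the first entry of $H_2$ equals $\min H_2$). Given the claim, setting $N := (p,p)$ or $N := (p+1,p+1)$ accordingly yields a fixed point of $\sigma$ with all remaining points SW or NE of $N$, and the portions $H_1'$ and $H_3'$ flanking $N$ are sub-\ps\ of $\sigma$, hence avoid the four patterns, hence (by induction) are separable-by-point. This gives $\sigma$ an ascending separable-by-point structure $(H_1', \{N\}, H_3')$.

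The main obstacle is proving the key claim. Suppose toward contradiction that $\sigma(p) < p$ and $\sigma(p+1) > p+1$. Since $\sigma(p)$ is not the maximum of $H_1$'s value set, some $i \in \{1,\dots,p-1\}$ satisfies $\sigma(p) < \sigma(i) \leq p$; since $\sigma(p+1)$ is not the minimum of $H_2$'s value set, the value $p+1$ is attained at some position $m \in \{p+2,\dots,n\}$. With $j := p$ and $\ell := p+1$, the positions $i<j<\ell<m$ satisfy $\sigma(j) = \sigma(p) < \sigma(i) \leq p < p+1 = \sigma(m) < \sigma(p+1) = \sigma(\ell)$, which is precisely the pattern $2\mn 14\mn 3$, contradicting the hypothesis. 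This single use of $2\mn 14\mn 3$-avoidance is the crux of the reverse direction; the other three pattern-avoidance hypotheses serve only to reduce to the ascending separable case.
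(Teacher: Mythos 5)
Your proposal is correct and follows essentially the same route as the paper: the forward direction is the same block-position case analysis (you phrase it as induction where the paper takes a minimal counterexample), and the reverse direction hinges on exactly the paper's key claim that $\sigma(p)=p$ or $\sigma(p+1)=p+1$, established by exhibiting the same $2\mn 14\mn 3$ occurrence at positions $i,p,p+1,\sigma^{-1}(p+1)$. The only point worth making explicit is that the flanking pieces $H_1'$ and $H_3'$ inherit avoidance of the \emph{vincular} patterns because they are blocks (intervals of positions), not merely sub-permutations.
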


\begin{proof}
Assume that $\sigma$ is  separable-by-point.
In particular, $\sigma$ is separable, and, therefore, it avoids
$2 \mn 4 \mn 1 \mn 3$ and $ 3 \mn 1 \mn 4 \mn 2$.
Assume for the sake of contradiction that $\sigma$ contains an
occurrence of $2 \mn 14 \mn 3$,
corresponding to the points $N_i, N_j, N_{j+1}$ and $N_k$,
and has a minimal size for this property.
Then the points forming the pattern must be spread
in at least two of the three blocks.
This forces $\sigma$ to have an ascending structure, with $N_i$ and $N_j$
in one block, $N_{j+1}$ and $N_k$ in the following one
(because $N_j$ and $N_{j+1}$ are adjacent).
But this is impossible as the central block of $\sigma$ contains a unique point.
Similarly one shows that $\sigma$ avoids $3 \mn 41 \mn 2$.

Conversely, we argue by induction on the size of $\sigma$.
Let $\sigma$ be a $(2 \mn 14 \mn 3, \ 3 \mn 41 \mn 2, \ 2 \mn 4
\mn 1 \mn 3, \ 3 \mn 1 \mn 4 \mn 2)$-avoiding permutation of $[n]$.
For $n \leq 3$ there is nothing to prove. Let $n \geq 4$.
Since $\sigma$ is $(2 \mn 4 \mn 1 \mn 3, \ 3 \mn 1 \mn 4 \mn
2)$-avoiding, it is separable. Assume without loss of generality that
$\sigma$ (as a separable permutation) has an ascending structure:
the first block is $[1,i]\times[1,i]$, the second block is
$[i+1,n]\times[i+1,n]$ where $1 \leq i < n $.
If $\sigma(i) \neq i$ and $\sigma(i+1) \neq i+1$, then
$ \sigma^{-1}(i), i, i+1, \sigma^{-1}(i+1)$ form a forbidden pattern $2 \mn
14 \mn 3$. Thus, $\sigma(i) = i$ or $\sigma(i+1) = i+1$,
and one obtains a three-block decomposition of $\sigma$ by choosing for
the central block $N$ one of these two fixed points.
The remaining two blocks  avoid all four
patterns, and, therefore are
separable-by-point themselves by the induction hypothesis. It follows
that $\sigma$ is separable-by-point.
\end{proof}

\begin{Theorem}
\label{the:sep_point}
A floorplan $P$ is a guillotine floorplan if and only if $S(P)$ is
separable-by-point.
 \end{Theorem}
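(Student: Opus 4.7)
The proof goes by induction on $n$, the number of segments of $P$, with the case $n = 0$ trivial (both sides hold). For the forward direction, suppose $P$ is guillotine and choose a first cut $I$; using Propositions~\ref{obs:sym-orders} and~\ref{obs:sym-S} we may assume $I$ is vertical. Then $I$ splits $P$ into two guillotine sub-floorplans $P_1$ (left) and $P_2$ (right). Since the neighborhood relations among segments of $P_i$ are determined locally, the $\tnw$ and $\tsw$ orders of $P$ restricted to segments of $P_i$ coincide with those of $P_i$ seen as a standalone floorplan. Moreover $I$ lies to the right of every segment of $P_1$ and to the left of every segment of $P_2$, so with $n_1$ denoting the number of segments of $P_1$, $\sigma = S(P)$ satisfies $\sigma(n_1+1) = n_1+1$, the restriction $\sigma|_{[1,n_1]}$ coincides with $S(P_1)$, and $\sigma|_{[n_1+2,n]}$ (shifted by $-(n_1+1)$) coincides with $S(P_2)$. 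By induction both $S(P_1)$ and $S(P_2)$ are separable-by-point, so $\sigma$ is separable-by-point with ascending structure centered at $I$. A horizontal guillotine cut yields a descending structure analogously.

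For the converse, let $\sigma = S(P)$ be separable-by-point. Up to reflecting $P$ in the first diagonal (which by Proposition~\ref{obs:sym-S} reverses $\sigma$ and, by a direct verification, exchanges ascending and descending separable-by-point decompositions), we may assume that $\sigma$ admits an ascending decomposition with pivot $(k,k)$. By Observation~\ref{the:nei_dia}, every segment of $P$ other than $I_k$ is to the left or to the right of $I_k$; in particular $I_k$ has no above- and no below-neighbor. If $I_k$ is vertical then, since a vertical segment has at most one above-neighbor and at most one below-neighbor, $I_k$ must extend from the top boundary to the bottom boundary and is therefore a vertical guillotine cut. Splitting $P$ along $I_k$ produces $P_1, P_2$ whose S-permutations are the sub-blocks $\sigma|_{[1,k-1]}$ and $\sigma|_{[k+1,n]}$ (suitably shifted), both separable-by-point by the very definition of the ascending decomposition; induction gives $P_1, P_2$ guillotine, whence $P$ is guillotine.

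The main obstacle is the case where $I_k$ is horizontal. The plan is to show first that the strip above $I_k$, bounded below by $I_k$, above by the top boundary, and horizontally by the span of $I_k$, reduces to a single rectangle: any horizontal segment inside this strip would chain down to $I_k$ via shared rectangles and thus yield an above-neighbor of $I_k$, a contradiction; and any vertical segment inside the strip would have its lower endpoint either on such a forbidden horizontal segment or on $I_k$ itself (again an above-neighbor). The symmetric argument handles the strip below. If $I_k$ spans the full width, then $n=1$ and $I_k$ is itself a horizontal guillotine cut. Otherwise at least one endpoint of $I_k$ lies on a vertical segment $L$; since $L$ bounds the single rectangles both above and below $I_k$, it must extend from the top to the bottom boundary, giving a vertical guillotine cut $L = I_\ell$. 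Writing $m = \ell - 1$ for the number of segments of $P$ lying to the left of $L$, one verifies as in the forward argument that $\sigma(\ell) = \ell$, $\sigma([1,m]) = [1,m]$, and $\sigma([m+2,n]) = [m+2,n]$. The two restrictions are sub-permutations of $\sigma$, hence they inherit avoidance of $(2\mn14\mn3, 3\mn41\mn2, 2\mn4\mn1\mn3, 3\mn1\mn4\mn2)$ and are separable-by-point by Proposition~\ref{the:sep_point_av}; applying the induction hypothesis to the two parts of the $L$-split completes the proof.
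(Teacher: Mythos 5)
Your proposal is correct, and its overall skeleton (induction, pivot of the separable-by-point decomposition is a fixed point, vertical pivot gives a full cut, horizontal pivot reduced to an adjacent vertical cut) matches the paper's. The genuine difference is in how the horizontal-pivot case of the converse is handled. The paper stays at the level of the permutation: it shows that $\sigma(i-1)=i-1$ and $\sigma(i+1)=i+1$ must hold, because otherwise $N_i$ would have two SW-neighbors (or two NE-neighbors), i.e., the horizontal segment $I_i$ would have two left-neighbors (or two right-neighbors) by Observation~\ref{the:nei_dia} --- impossible for a horizontal segment; it then re-pivots at the vertical segment $I_{i-1}$. You instead argue in the floorplan: since $I_k$ has no above- or below-neighbor, the regions directly above and below it are single rectangles reaching the boundary, so the vertical segment $L$ at an endpoint of $I_k$ runs from top to bottom and is a guillotine cut. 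Both arguments ultimately cut along the same vertical segment (the paper's $I_{i-1}$ is a left-neighbor of the horizontal $I_i$, hence exactly your $L$); the paper's version is shorter, while yours makes geometrically explicit why that segment is a full cut. Two small points you should tighten. First, in the strip argument, a horizontal segment separated from $I_k$ by \emph{two or more} rectangles is not an above-neighbor of $I_k$ (the definition requires exactly one intermediate rectangle); the contradiction there comes instead from the vertical segment separating those rectangles, whose lower endpoint lies on $I_k$ and which is therefore an above-neighbor. Second, avoidance of the vincular patterns $2\mn14\mn3$ and $3\mn41\mn2$ is \emph{not} inherited by arbitrary sub-permutations; your transfer to $\sigma|_{[1,m]}$ and $\sigma|_{[m+2,n]}$ is valid only because these are restrictions to intervals of positions, so adjacency is preserved --- this deserves an explicit sentence.
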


\begin{proof}
 Let $P$ be a guillotine floorplan.
We argue by induction on the size of $P$. If $P$ consists of  a single
rectangle, then $S(P)$ is the empty \p, and is separable-by-point.
Otherwise, consider a segment that splits $P$ into two rectangles.
  Assume that this segment is $I_i$ (that is, the $i$th segment in the
  $\tnw$~order) and that it is vertical.
All the segments to the left
 (respectively, right) of $I_i$
 come before (respectively, after) $I_i$ in  the $\tnw$ and $\tsw$ orders.
 Consequently:
 \begin{itemize}
 \item [--]  $I_i$ is also the $i$th segment in the $\tsw$~order, so
   that $N_i = (i,i)$,
\item [--]
by Observation~\ref{the:nei_dia},
all the points of the graph of $\sigma$
that correspond to  segments located to the left (respectively, right) of
$I_i$ are to the SW (respectively, NE) of $N_i$.
 \end{itemize}
 Thus, we have three blocks  $H_1$, $H_2$  and $H_3$with an ascending structure.
The blocks $H_1$ and $H_3$ are the
S-permutations of the two parts of $P$, which are themselves
guillotine:  by the
induction hypothesis, $H_1$ and $H_3$ are separable-by-point.  Thus  $S(P)$ is separable-by-point with an ascending structure.

 Similarly, if $I_i$ is horizontal, we obtain a separable-by-point
 permutation with a descending structure.

\medskip

Conversely,
assume that
$\sigma:=S(P)$ is separable-by-point.
We will prove by induction on $n$ that $P$ is a guillotine floorplan.

  The claim is clear for $n=1$. For $n > 1$,
  assume without loss of generality that $\sigma$ has an ascending
  structure. Let $H_2=\{(i,i)\}$ be the second block in a
  decomposition of $\sigma$.
Then for all $j<i$, we have $I_j \ew I_i$, and for all $j>i$, we have
$I_i \ew I_j$. Therefore, if $I_i$ is vertical, it has no below- or
above-neighbors, and thus
extends from the lower to the upper
side of the boundary.
 The two sub-floorplans of $P$ correspond respectively to the blocks
 $H_1$ and $H_3$: hence they are guillotine by the induction
 hypothesis.
 Suppose now that $I_i$ is
horizontal. Then we have $\sigma(i-1) = i-1$ (if $i>1$) and $\sigma(i+1) =
i+1$ (if $i<n$), since otherwise $I_i$ has 
several left-neighbors
or
several right-neighbors (Observation~\ref{the:nei_dia}),
which is never the case for a horizontal segment.
 Assume without loss of generality
that $i>1$. Then
another block decomposition of $\sigma$ is obtained with the central
block $H'_2=\{(i-1,i-1)\}$, corresponding to the vertical segment
$I_{i-1}$.
The previous argument then shows that $P$ is guillotine.
\end{proof}

%========================================
\subsection{Enumeration}
\label{sec:multi}
%========================================
In this section we enumerate
S-equivalence classes of guillotine floorplans, or equivalently,
separable-by-point permutations.
\begin{Proposition}\label{prop:enum-guillotine}
For $n\ge 1$,   let $g_n$ be the number of separable  \ps\ of size
$n$, and let $G(t)$ the associated \gf, given by~\eqref{schroeder}.
The number $h_n$ of separable-by-point permutations of size $n$ is
$$
h_n=\sum_{i=0}^{\lfloor(n+1)/2\rfloor} (-1)^i \binom{n+1-i}{i} g_{n+1-i}.
$$
Equivalently, the \gf\ of separable-by-point permutations is
$$
H(t)=\sum_{n\ge 0} h_nt^n= \sum_{n\ge 0} t^{n}(1-t)^{n+1} g_{n+1} =
\frac 1 t  G(t(1-t))=\frac{1-t+t^2-\sqrt{1-6t+7t^2-2t^3+t^4}}{2t}.
$$
\end{Proposition}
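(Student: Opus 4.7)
The plan is to parallel the proof of Proposition~\ref{prop:enum}, with Baxter \ps\ replaced throughout by separable \ps. First, I would combine three ingredients: (i) Theorem~\ref{the:sep_point}, which identifies guillotine floorplans with floorplans whose S-\p\ is separable-by-point; (ii) the result of~\cite{abp} that R-\ps\ of guillotine floorplans are exactly the separable \ps\ of size $n+1$; and (iii) Theorem~\ref{the:fib}, according to which two floorplans have the same S-\p\ iff their R-\ps\ share the same F-structure. Putting these together, separable-by-point \ps\ of size $n$ are in bijection with separable \ps\ of size $n+1$ that have no improper pair (improper pairs being defined exactly as in Section~\ref{sec:enum}).

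Next I would run the same inclusion-exclusion as in the proof of Proposition~\ref{prop:enum}. Let $g_{n+1,i}$ denote the number of separable \ps\ of size $n+1$ with $i$ marked improper pairs. Inclusion-exclusion on improper pairs yields
\[
h_n \;=\; \sum_{i\ge 0} (-1)^i\, g_{n+1,i}.
\]
To evaluate $g_{n+1,i}$, I would apply the contraction map of Proposition~\ref{prop:enum}: collapsing each marked improper pair to a single marked point produces a \p\ of size $n+1-i$ with $i$ marked points, and each such pair is recovered uniquely by the expansion rules stated there. The key verification is that this contraction-expansion bijection stays inside the separable class. This holds because (a)~the class of separable \ps\ is substitution-closed (it is a simple permutation class with only the trivial simple \ps\ $1$, $12$, $21$), and (b)~an improper pair is a block of size $2$ sitting inside an F-block, so replacing it by a single point (or conversely, blowing up a point inside an F-block into $12$ or $21$) amounts to substituting one separable pattern for another inside a separable ambient. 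Hence the contraction is a bijection, giving
\[
g_{n+1,i} \;=\; \binom{n+1-i}{i}\, g_{n+1-i},
\]
and therefore the stated closed form for $h_n$.

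For the generating function identity, Fubini and the binomial theorem give
\[
H(t) \;=\; \sum_{n\ge 0} t^n \sum_{i\ge 0} (-1)^i \binom{n+1-i}{i} g_{n+1-i}
\;=\; \frac{1}{t}\sum_{k\ge 1} g_k \bigl(t(1-t)\bigr)^k
\;=\; \frac{1}{t}\, G\bigl(t(1-t)\bigr),
\]
after the change of index $k=n+1-i$. Substituting the explicit formula~\eqref{schroeder} for $G$, together with
$1-t(1-t)=1-t+t^2$ and $1-6t(1-t)+t^2(1-t)^2=1-6t+7t^2-2t^3+t^4$, yields the announced closed form.

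The main obstacle is really only the compatibility check in step two: one must observe that within a separable R-\p, each maximal F-block is itself a separable block (ascending F-blocks are patterns with $|\tau(i)-i|\le 1$ and are trivially separable, and likewise for descending ones), so that the local surgery performed on improper pairs never takes us out of the separable world. Once this is in hand, the inclusion-exclusion and the generating function manipulation are routine.
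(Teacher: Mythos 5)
Your proposal is correct and follows essentially the same route as the paper: the paper's proof simply recalls that R-permutations of guillotine floorplans are the separable permutations and observes that the contraction/expansion of improper pairs from Proposition~\ref{prop:enum} preserves separability, so the same inclusion-exclusion applies verbatim. Your additional justification via substitution-closedness of the separable class is a valid way to make explicit the compatibility check that the paper leaves to the reader.
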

\begin{proof}
Recall that the R-permutations associated with guillotine floorplans
are the separable \ps, and return to the proof of Proposition~\ref{prop:enum}. The
contraction/expansion of points used in this proof preserves
separability, so that we can apply the same argument, which yields
directly the proposition.
\end{proof}

\nid{\bf Remarks}\\
1. The first values
 are $1, 2, 6, 20, 70, 254, 948, 3618, 14058, 55432$.
This sequence~\cite[A078482]{oeis} also enumerates $(2 \mn 4 \mn 3 \mn 1, 3 \mn 2 \mn 4 \mn 1,
2 \mn 4 \mn 1 \mn 3, 3 \mn 1 \mn 4 \mn 2)$-avoiding permutations (or \ps\ sortable by a stack of queues),
as found by Atkinson and Stitt~\cite[Thm.~17]{as}.\\
2.
Using the
\emm transfer theorems,\ from~\cite[Sec.~VI.$4$]{fs},
we can find the
asymptotic behavior of the numbers $h_n$:
\[
   h_{ n} \sim  \left(\frac{2}{1-\sqrt{8\sqrt{2}-11}}\right)^n n^{-3/2},
\]
up to a multiplicative constant.\\
3. A generalization of Proposition~\ref{prop:enum-guillotine} to
$d$-dimensional guillotine
partitions is presented in~\cite{ourselves-g}.

%%%%%%%%%%%%%%%%%%%%%%%%%%%%%%%%%%%%%%%%%%%%%%%%%%%%%%%%%%%%%%%%%%%%%%
\section{Final remarks}
\label{sec:summary}
%%%%%%%%%%%%%%%%%%%%%%%%%%%%%%%%%%%%%%%%%%%%%%%%%%%%%%%%%%%%%%%%%%%%%%

We have shown that many analogies exist between R- and
S-equivalence. However, there also seems to be 
 one important
difference. Looking at Fig.~\ref{fig:r_equivalence} suggests that one can transform a floorplan into an R-equivalent
one by some continuous deformation. In other words, R-equivalence
classes appear as geometric planar objects. This is confirmed by
the papers~\cite{mbm,felsner,fusy-bipolar}, which show that \emph{bipolar orientations of planar maps}
 provide a
convenient geometric description of R-equivalence classes of floorplans.
However, S-equivalence is a coarser relation, and
two S-equivalent floorplans may look rather different
(Figs.~\ref{fig:s_equivalence} and~\ref{fig:fib0}). It would be interesting to find a class of
geometric objects that captures the notion of S-equivalence
classes, as bipolar orientations do for R-equivalence classes.

In Section~\ref{sec:enum} we have established a simple enumerative
connection, involving Catalan numbers,
between Baxter \ps\ and (2-14-3,3,41-2)-avoiding \ps. Is there a
direct combinatorial proof of~\eqref{cat}, not based on
Theorem~\ref{the:fib} (the proof of which is rather heavy)?
Recall that $C(s)$ is related to pattern avoiding \ps, since
it counts $\tau$-avoiding \ps, for any pattern $\tau$ of size 3.

We conclude with a summary of the enumerative
results obtained in~\cite{abp} for R-equivalence classes
and in the present paper for S-equivalence classes.

\bigskip
  \begin{center}
\begin{tabular}{|l||l|l||}
  \hline
   & All floorplans & Guillotine floorplans \\ \hline\hline
  $\begin{array}{c}
   \textrm{R-equivalence}  \\
    \textrm{classes}
  \end{array}$&
  $\begin{array}{l}
    \textbf{Forbidden patterns: } \\
    \ 2\mn 41\mn 3, \ \, \ 3\mn 14\mn 2 \\
    \includegraphics[height=12mm]{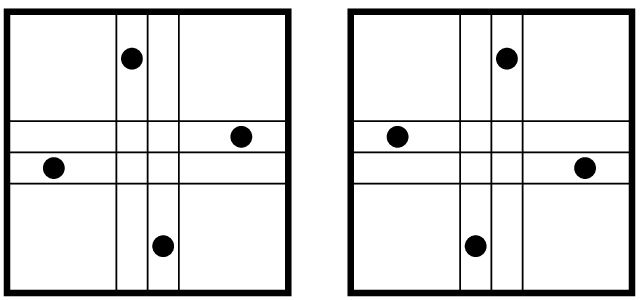} \\
    \textbf{Enumerating sequence:} \\
    1, 2, 6, 22, 92, 422, 2074, 10754,
\dots \\
    \smallskip
    \textrm{(Baxter numbers \cite[A001181]{oeis})}\\
    \smallskip
    \textbf{Growth rate: } 8 .
  \end{array}$
  &
  $\begin{array}{l}
    \textbf{Forbidden patterns: } \\
    2\mn 4 \mn 1\mn 3, \ \ 3\mn 1 \mn 4\mn 2 \\
     \includegraphics[height=12mm]{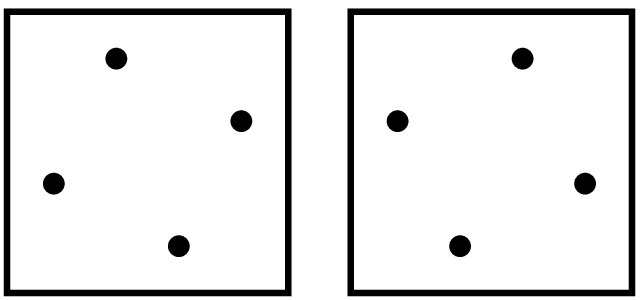} \\
    \textbf{Enumerating sequence:} \\
    1, 2, 6, 22, 90, 394, 1806, 8558,
\dots \\
    \smallskip
    \textrm{(Schr\"{o}der numbers \cite[A006318]{oeis})}\\
    \smallskip
    \textbf{Growth rate: } 3+2\sqrt{2} \approx 5.8284 .
  \end{array}$
   \\
  \hline
  $\begin{array}{c}
   \textrm{S-equivalence}  \\
    \textrm{classes}
  \end{array}$
   &
  $\begin{array}{l}
    \textbf{Forbidden patterns: } \\
    \ 2\mn 14\mn 3, \ \ \ 3\mn 41\mn 2 \\
    \includegraphics[height=12mm]{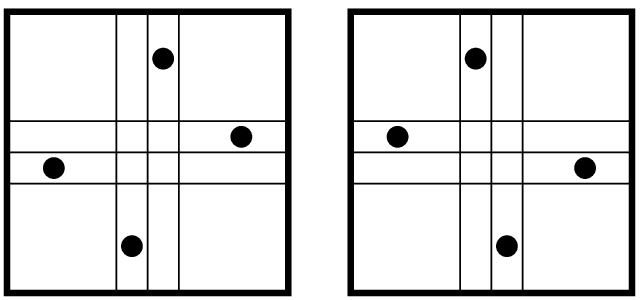} \\
    \textbf{Enumerating sequence:} \\
    1, 2, 6, 22, 88, 374, 1668, 7744,
\dots \\
    \smallskip
\textrm{(\cite[A214358]{oeis})}    \\
    \smallskip
    \textbf{Growth rate: } 4+2\sqrt{2} \approx 6.8284 .
  \end{array}$
 &
  $\begin{array}{l}
    \textbf{Forbidden patterns: } \\
    \ 2\mn 14\mn 3, \  \ 3\mn 41\mn 2, \ \, \ 2\mn 4 \mn 1\mn 3, \ \ 3\mn 1 \mn 4\mn 2 \\
    \includegraphics[height=12mm]{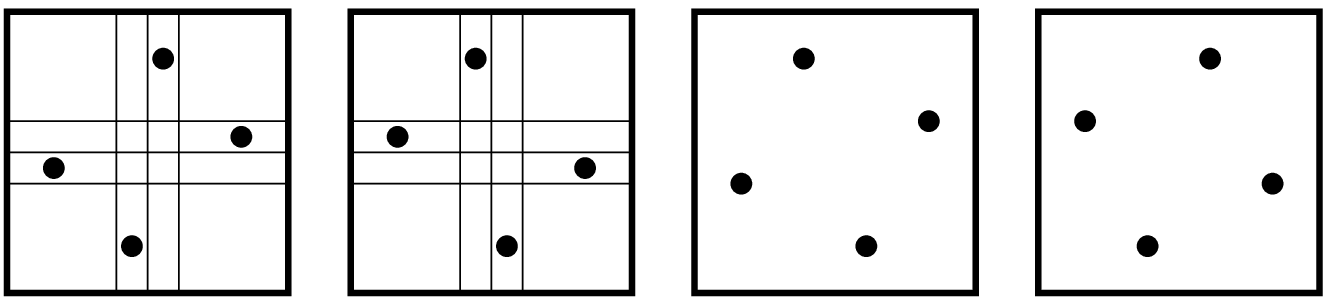} \\
    \textbf{Enumerating sequence:} \\
    1, 2, 6, 20, 70, 254, 948, 3618,
\dots \\
    \smallskip
     \textrm{(\cite[A078482]{oeis})}\\
     \smallskip
    \textbf{Growth rate: } \frac{2}{1-\sqrt{8\sqrt{2}-11}} \approx 4.5465.
  \end{array}$
 \\ \hline
  \hline
\end{tabular}
  \end{center}

%%%%%%%%%%%%%%%%%%%%%%%%%%%%%%%%%%%%%%%%%%%%%%%%%%%%%%%%%%%%%%%%%%%%%%

\bigskip
\noindent
{\bf Acknowledgements.} We thank \'Eric
Fusy for interesting discussions on the genesis of Baxter \ps, and
also for discovering, with Nicolas Bonichon, a mistake in an earlier
(non-)proof of Proposition~\ref{prop:enum}. We also thank Mathilde
Bouvel for her help in proving that the reverse bijection $S^{-1}$ can
be implemented in linear time. Finally, we acknowledge
interesting comments from the referees of the first version of the paper.
%%%%%%%%%%%%%%%%%%%%%%%%%%%%%%%%%%%%%%%%%%%%%%%%%%%%%%%%%%%%%%%

\bigskip

\newpage
\begin{appendix}
\appendix

%%%%%%%%%%%%%%%%%%%%%%%%%%%%%%%%%%%%%%%%%%%%%%%%%%%%%%%%%%%%%%%%%%%%%%%%%%%%
\section{Proof of Proposition~\ref{the:bij2},
Paragraph \boldmath{$D$}}
\label{app:sur}
%%%%%%%%%%%%%%%%%%%%%%%%%%%%%%%%%%%%%%%%%%%%%%%%%%%%%%%%%%%%%%%%%%%%%%%%%%%%

\emph{We prove that, given a permutation $\sigma$,
(any) floorplan $P$ obtained by the construction
described in Paragraphs A-B
of the proof satisfies $S(P)=\sigma$.}

\nid In order to prove this claim,
we will show that for all $1 \leq i < n$, the segment $K_{i+1}$ is the
immediate successor of $K_{i}$ in the $\tnw$~order, and that
$K_{\sigma^{-1}(i+1)}$ is the immediate successor of
$K_{\sigma^{-1}(i)}$ in the $\tsw$~order.

Let us first prove that the first statement implies the second.
Let $\si'$ be obtained by applying a quarter-turn
rotation $\rho$ to $\si$ in counterclockwise direction. Let us denote by $K'_i$
the segment of $P'=\rho(P)$ containing the point $(i, \si'(i))$. By
Observation~\ref{obs:sym-reverse}, the floorplan $P'$ is
associated with $\si'$ by our construction. That is,
$\rho(K_{\si^{-1}(i)})= K'_{n+1-i}$. By assumption, $K'_{n+1-i}=\rho(K_{\si^{-1}(i)})$
follows $K'_{n-i}=\rho(K_{\si^{-1}(i+1)})$ for the $\tnw$ order in
$P'$.
Applying the quarter turn clockwise rotation $\rho^{-1}$ and
the second remark following Proposition~\ref{obs:sym-orders}, this means that
$K_{\si^{-1}(i+1)}$ follows 
$K_{\si^{-1}(i)}$ for the
$\tsw$ order in $P$.

%%%%%%%%%%%%%%%%%%%%%%%%%%%%%%%%%%%%%%%%%%%%%%%%%%%%%%%%%
Thus we only need to prove that  $K_{i+1}$ is the
immediate successor of $K_{i}$ in the $\tnw$~order.
By Observation~\ref{th:se_nei}, the immediate successor
of a horizontal (respectively, vertical) segment $I$ in the
$\tnw$~order is
$\R(I)$, $\LVB(I)$ or $\LHB(I)$ (respectively,
$\B(I)$, $\UHR(I)$ or $\UVR(I)$),\footnote{This notation is defined
  before Observation~\ref{th:se_nei}.} depending on
the existence of these segments
and the type of joins between them.

There are 8 cases to consider,
 depending on whether $\sigma(i)<\sigma(i+1)$ or
 $\sigma(i)>\sigma(i+1)$, and on the directions of $K_i$ and
 $K_{i+i}$.

\vspace{1mm}\nid\emph{Case $1$: $\sigma(i)<\sigma(i+1)$, $K_i$ and $K_{i+1}$ are vertical.}

Assume that $N_j$ 
bounds $K_i$ from above. Then, as shown in Paragraph C.1 above,
$K_j$ is horizontal; furthermore, $K_i$ and $K_j$ have a $\downvdash$
join at the point $(i, \sigma(j))$.
In particular, the rightmost point of $K_j$ has abscissa at least $i+1$.

If $\sigma(j) < \sigma(i+1)$, then $N_{i+1}$ bounds $K_j$ from the
right. There is a $\leftvdash$ join of $K_j$ and $K_{i+1}$ at the
point $(i+1, \sigma(j))$
(Fig.~\ref{fig:6_1-revised}(1)).

If $\sigma(j) > \sigma(i+1)$, then $N_{j}$ 
 bounds $K_{i+1}$ from above
and there is a $\downvdash$ join of $K_j$ and $K_{i+1}$
(Fig.~\ref{fig:6_1-revised}(2)).

If $N_j$ does not exist and $K_i$ reaches the upper side of the boundary,
then no point can bound $K_{i+1}$ from above, and, thus, $K_{i+1}$
reaches the boundary as well
(Fig.~\ref{fig:6_1-revised}(3)).

In all these cases, it is readily seen that $K_{i+1}$ is $\UVR(K_i)$.

By Observation~\ref{th:se_nei},
$\UVR(K_i)$ is the successor of $K_i$, unless
$\UHR(K_i)$ does not exist, $\B(K_i):=K_p$ exists and its join with
$\UVR(K_i)$ is of type $\leftvdash$
(Fig.~\ref{fig:6_1-revised}(4)).
 But this would mean that $p<i$,
and the positions of $N_i$ and $N_p$ would then contradict Observation~\ref{the:join-types}.

\begin{figure}[ht]
\begin{center}
\includegraphics[width=120mm]{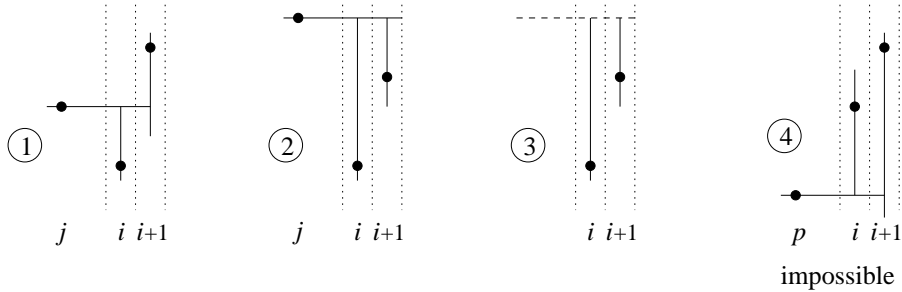}
\caption{The first case.}
\label{fig:6_1-revised}
\end{center}
\end{figure}

\vspace{1mm}\nid\emph{Case $2$: $\sigma(i)<\sigma(i+1)$, $K_i$ is vertical
  and $K_{i+1}$ is horizontal.}

The point $N_{i}$ bounds $K_{i+1}$ from the left.
Therefore, there is a $\rightvdash$ join of $K_{i}$ and $K_{i+1}$ at
the point $(i, \sigma(i+1))$, and $K_{i+1}$ is a horizontal
right-neighbor of $K_i$. Moreover, if $K_k$ is another horizontal
right-neighbor of $K_i$, then $\sigma(k) < \sigma(i+1)$: otherwise
$N_i$ cannot be a SW-neighbor of $N_k$
(Fig.~\ref{fig:6_2-revised}, left).
Therefore, $K_{i+1} = \UHR(K_i)$.

By Observation~\ref{th:se_nei},
$\UHR(K_i)$ is the successor of $K_i$, unless
$\UVR(K_i):=K_p$  exists
(Fig.~\ref{fig:6_2-revised}, right).
If this were the case, $K_p$ and $K_{i+1}$ would have a
$\upvdash$ join, and the position of $N_{i+1}$ and $N_p$ would then be
incompatible with Observation~\ref{the:join-types}.

\begin{figure}[ht]
\begin{center}
\includegraphics[width=55mm]{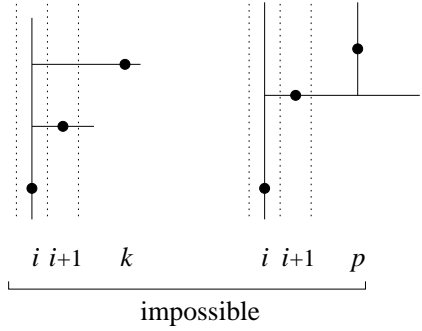}
\caption{The second case.}
\label{fig:6_2-revised}
\end{center}
\end{figure}

%%%%%%%%%%%%%%%%%%%%%%%%%%%%%%%%%%%%%%%%%%%%

\vspace{1mm}\nid\emph{Case $3$: $\sigma(i)<\sigma(i+1)$, $K_i$ is horizontal
and $K_{i+1}$ is vertical.}

We claim that this  case follows from the previous one. Let $\si'$ be
obtained by applying a half-turn rotation $\rho$ to (the graph of)
$\si$. By Observation~\ref{obs:sym-reverse}, the floorplan
$P'=\rho(P)$ is associated with $\si'$. The points and segments
$\rho(N_i)$, $\rho(N_{i+1})$, $\rho(K_i)$, $\rho(K_{i+1})$ in $P'$ are
in the configuration described by Case 2, with  $\rho(N_{i+1})$ to the
left of $\rho(N_i)$. Consequently, $\rho(N_i)$ is the successor of
$\rho(N_{i+1})$ in the $\tnw$ order in $P'$. By the first remark that
follows Proposition~\ref{obs:sym-orders},
$N_{i+1}$ is the successor of $N_i$ in the $\tnw$ order in $P$.

%%%%%%%%%%%%%%%%%%%%%%%%%%%%%%%%%%%%%%%%%%%%

\vspace{1mm}\nid\emph{Case $4$: $\sigma(i)<\sigma(i+1)$, $K_i$ and $K_{i+1}$ are horizontal.}

If this case, $N_{i}$ bounds $K_{i+1}$ from the left. Therefore, $K_i$
must be vertical (see Paragraph C.1 above).
Hence, this case is impossible.

\vspace{1mm}\nid\emph{Case $5$: $\sigma(i)>\sigma(i+1)$, $K_i$ and $K_{i+1}$ are vertical.}

Since $K_{i+1}$ is vertical, $N_{i+1}$ has at most one NW-neighbor, which is
then $N_i$.  By Paragraph C.1 above, $K_i$ is then horizontal. Thus this
case is impossible.

\vspace{1mm}\nid\emph{Case $6$: $\sigma(i)>\sigma(i+1)$, $K_i$ is
  vertical and $K_{i+1}$ is horizontal}

Since the segment $K_i$ is vertical, the point $N_i$ has at most one
SE-neighbor, which is then $N_{i+1}$.
Therefore, $N_{i+1}$ bounds $K_i$ from below,
and there is a $\upvdash$ join of $K_{i}$ and $K_{i+1}$ at the point
$(i, \sigma(i+1))$.
In particular, $K_{i+1}= \B(K_i)$.

By Observation~\ref{th:se_nei},
$\B(K_i)$ is the successor of $K_i$, unless
$\UHR(K_i):=K_k$  exists
(Case (3.2) in Fig.~\ref{fig:next_se}),
 or $\UHR(K_i)$ does not exist, but
$\UVR(K_i):=K_p$ does and forms with $\B(K_i)$ a $\upvdash$ join
(Case (4.1) in Fig.~\ref{fig:next_se}).
In the former case,
$K_k$ reaches $K_i$ and thus is bounded by $N_i$ on the left, but then
$N_i$ and $N_{i+1}$ are two SW-neighbors of $N_k$, and $K_k$ cannot be
horizontal.
In the latter case, $K_p$ and $K_{i+1}$ would form a $\upvdash$ join,
and the positions of $N_p$ and $N_{i+1}$ would contradict
Observation~\ref{the:join-types}.

\vspace{1mm}\nid\emph{Case $7$: $\sigma(i)>\sigma(i+1)$, $K_i$ is
  horizontal and   $K_{i+1}$ is vertical.}

This case follows from Case $6$ by the symmetry argument already  used  in Case $3$.

\vspace{1mm}\nid\emph{Case $8$: $\sigma(i)>\sigma(i+1)$, $K_i$ and $K_{i+1}$ are horizontal.}

The point that bounds $K_i$ from the right, if it exists, lies to the NE of
$N_{i+1}$.
Thus the abscissa of the rightmost point of $K_i$
is greater than or equal to
the abscissa of the rightmost point of $K_{i+1}$.

We will show that $K_{i+1}= \LHB(K_i)$. Once this is proved,
Observation~\ref{th:se_nei} implies that $\LHB(K_i)$ is the successor of $K_i$, unless
$\LVB(K_i)$ does not  exist, but $R(K_i)$ exists and   forms with
$K_{i+1}$ a $\upvdash$ join
(Case (4.2) in Fig.~\ref{fig:next_se}).
  But this would mean that $K_{i+1}$ ends
further to the right than $K_i$, which we have just proved to be
impossible.

So let us  prove that $K_{i+1}= \LHB(K_i)$. We  assume that $K_i$ does
 not reach the left side of the boundary, and that $K_{i+1}$ does not
 reach the right side of the boundary (the other cases are
proven similarly).
Let $N_{k}$ be the point that bounds $K_i$ from the left,
and let $N_{m}$ be the point that bounds $K_{i+1}$ from the right.

Consider $A$, the leftmost rectangle whose upper side is contained in $K_i$.
The left side of $A$ is clearly contained in $K_k$.
We claim that the lower side of $A$ is contained in $K_{i+1}$,
and that the right side of $A$ is contained in $K_{m}$. Note that this
implies  $K_{i+1} = \LHB(K_i)$.

Let $K_p$ (respectively, $K_q$) be the segment that contains the lower
(respectively, right) side of $A$.
Clearly,  $q>k$. If $q<i$, then $K_q$ is a vertical below-neighbor of $K_i$, and the positions of $N_q$ and $N_i$  contradict
Observation~\ref{the:join-types}. Therefore, $q > i+1$.

Consider now the segment $K_p$.
Clearly, $\sigma(p) \geq \sigma(i+1)$.
One cannot have $p>i+1$: otherwise $N_{i+1}$
(or a point located to the right of $N_{i+1}$)
would bound $K_p$ from
the left, and $K_p$ would not reach $K_k$.
One cannot have either $p<i$: otherwise $N_{i}$
(or a point located to the left of $N_{i}$)
would bound $K_p$ from
the right, and $K_p$ would not reach $K_q$. Since $p\not = i$, we have
proved that  $p=i+1$,
and $K_k$ is bounded by $N_{i+1}$ from below.

Finally, $K_{q}$ coincides with $K_m$:
otherwise, $q<m$, and $K_q$ is a vertical above-neighbor of $K_{i+1}$;
however, in this case $N_q$ would bound $K_{i+1}$ from the right,
and $K_{i+1}$ would not reach $K_m$.

We have thus proved that $K_{i+1} = \LHB(K_i)$, and this
concludes the study of this final case, and the proof of
Proposition~\ref{the:bij2}.

%%%%%%%%%%%%%%%%%%%%%%%%%%%%%%%%%%%%%%%%%%%%%%%%%%%%%%%%%%%%%%%%%%%%%%%%%%%%
\section{Proof of Lemma~\ref{the:weak}}
\label{app:weak}
%%%%%%%%%%%%%%%%%%%%%%%%%%%%%%%%%%%%%%%%%%%%%%%%%%%%%%%%%%%%%%%%%%%%%%%%%%%%

Let $N_i = (i,j)$ be inside a maximal F-block of $R(P)$. Assume
for the sake of contradiction that
$N_i$ is strong, and for instance,
has several
NE-neighbors. Let $N_k$ be the leftmost NE-neighbor of $N_i$, and let
$N_{\ell}$ be the lowest NE-neighbor of $N_i$.
If $k> i+1$, then
$\sigma(k-1)<\sigma(i)$, and, therefore, $i, k-1, k, \ell$ form a
forbidden pattern $2 \mn 14 \mn 3$. Thus $k= i+1$.
 Symmetrically, $\sigma(\ell) =j+1$ (Fig.~\ref{fig:weak-inside}).
Note also that $\sigma(i+1)>j+1$ and $\sigma^{-1}(j+1)>i+1$.
Since the points of $S(P)$ inside an F-block are either on the
diagonal or the anti-diagonal of this block, $N_i$ is the
highest (and rightmost) point of $S(P)$ inside the maximal F-block
that contains it, and  this F-block is of ascending type.
      In particular, either $\rho(i+1) = j+1$, or $\rho(i) = j+1$ and $\rho(i+1) = j$.

Since $\rho(i+1) \leq j+1$ and $\sigma(i+1) \geq j+2$, then $\rho(i+2)
\ge j+3$ (Theorem~\ref{the:r2s}). Symmetrically, $\rho^{-1}(j+2)\ge i+3$. But then
the position of the point $(\rho^{-1}(j+2),j+2)$
is not compatible with the position of $N_{i+1}$:
by Theorem~\ref{the:r2s}, there cannot be a point of $\rho$ located to the
right of $\rho(i+2)$ and in the rows between those of $\rho(i+1)$ and $N_{i+1}$.
Hence $N_i$ cannot have several NE-neighbors. Symmetric statements
hold for the other directions, and $N_i$ is a weak point.

\begin{figure}[ht]
  \begin{center}
  \scalebox{0.8}{\input{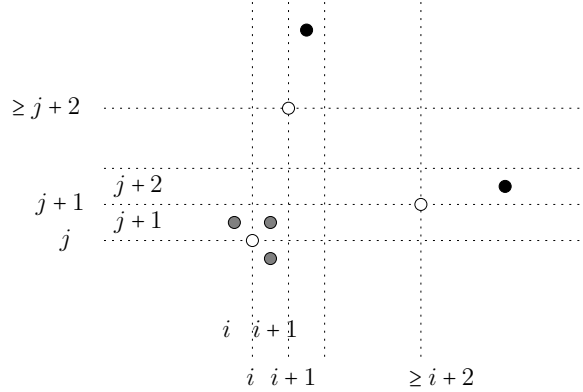}}
  \end{center}
\caption{Some points of the combined diagram of $\rho$ and
  $\sigma$. The grey points represent the two possibilities $\rho(i+1) = j+1$, or $\rho(i) = j+1$ and $\rho(i+1) = j$.}
\label{fig:weak-inside}
\end{figure}

Now let $N_i = (i, j)$ be a point of the graph of $\sigma$, not inside a
maximal F-block. Assume without loss of generality that
$\rho$ has an ascent at $i$:
  $\rho(i) \leq j < \rho(i+1)$ and (by Theorem~\ref{the:r2s})  $\rho^{-1}(j) \leq i < \rho^{-1}(j+1)$.
  We shall show that  $N_i$ has
several SW-neighbors or several NE-neighbors.
We denote $M_i=(i, \rho(i))$.

First, if $\rho(i)=j$ and $\rho(i+1)=j+1$, then $M_i$ and $M_{i+1}$
form an F-block, and $N_i$ is inside this block.
Therefore, either $\rho(i)\not=j$ or $\rho(i+1)\not=j+1$, and we may
assume without loss of generality that $\rho(i) \neq j$; hence
$\rho(i)<j$ and $\rho^{-1}(j)<i$.
Then it follows
from the definition of Baxter \ps\ that $\rho(i-1)\le j$
(otherwise, there is an occurrence of 2-41-3 at positions
$\rho^{-1}(j), i-1, i, \rho^{-1}(j+1)$).
Consequently, we have $\sigma(i-1) \leq j-1$.
Symmetrically, $\rho^{-1}(j-1) \le i $ and $\sigma^{-1}(j-1)
\leq i-1$. There are two possibilities: either $\sigma(i-1) < j-1$ and
$\sigma^{-1}(j-1) < i-1$, or $\sigma(i-1) = j-1$.
In the former case, $N_{i-1}$ and $N_{\sigma^{-1}(j-1)}$
are two SW-neighbors of $N_i$,
and we have proved that $N_i$  is strong.
Let us go on with  the latter case,
where $\rho(i-1) = j$ and $\rho(i)=j-1$.

 If $\rho(i+1)=j+1$, then $M_{i-1}$, $M_i$ and $M_{i+1}$  form an
 F-block, and $N_i$ is inside this block, which  contradicts our
 initial assumption. Otherwise,  $\rho(i+1)\not=j+1$, and an argument similar to the one
 developed just above shows that either $N_{i+1}$ and
$N_{\sigma^{-1}(j+1)}$ are two NW-neighbors of $N_i$, or $\rho(i+1)
 =j+2$ and $\rho(i+2) = j+1$. In the latter case,
$M_{i-1}$, $M_i$, $M_{i+1}$ and $M_{i+2}$ form an
 F-block containing $N_i$, which  contradicts our
 initial assumption.

%%%%%%%%%%%%%%%%%%%%%%%%%%%%%%%%%%%%%%%%%%%%%%%%%%%%%%%%%%%%%%%%%%%%%%%%%%%%
 \section{Proof of Theorem~\ref{the:fib}, the ``only if'' direction}
 \label{app:fib}
 %%%%%%%%%%%%%%%%%%%%%%%%%%%%%%%%%%%%%%%%%%%%%%%%%%%%%%%%%%%%%%%%%%%%%%%%%%%%

 Let $\sigma$ be a $(2\mn14\mn3, 3\mn41\mn2)$-avoiding \p\ of size $n$.
Let $\cB$
be the set of Baxter \ps\ whose S-permutation (described by
Theorem~\ref{the:r2s}) is $\sigma$.
Lemma~\ref{the:weak} determines which points of the graph
 of $\sigma$ are inside an F-block.
These points
are organized along the
diagonal or anti-diagonal of their blocks.
It follows that the location of all non-trivial F-blocks
in the graph of $\rho$, for $\rho \in \mathcal{B}$,
and their type (ascending or descending, for blocks of size at least
3), are also determined uniquely. It remains to show that the location
of the trivial F-blocks (that is, F-blocks of size $1$) is also
determined 
 by $\sigma$.

Assume for the sake of contradiction that
$\mathcal{B}$ contains two distinct \ps\  $\rho_1$ and  $\rho_2  $.
Let $i$ be the abscissa of the leftmost  trivial F-block that is not
at the same ordinate in the graphs of $\rho_1$ and $\rho_2$.
 Denote $j=\sigma(i)$.
By symmetry,  we only have to consider two cases:
$(1)$ $\rho_1(i) < \rho_2(i) \leq j$; $(2)$ $\rho_1(i) \leq j < \rho_2(i)$.

In the first case (Fig.~\ref{fig:f_theorem_1}),
denote $k=\rho_2(i)$. Consider $\rho^{-1}_1(k)$.
By assumption, $\rho^{-1}_1(k) \neq i$.
Since $\rho_1(i)<k$ and $\sigma(i) = j \geq k$, we have
$\rho^{-1}_1(k)<i$ by Theorem~\ref{the:r2s}.
However, this is impossible since
the F-structures of $\rho_1$ and $\rho_2$
coincide to the left of the $i$th column.

\begin{figure}[h]
$$\includegraphics[width=110mm]{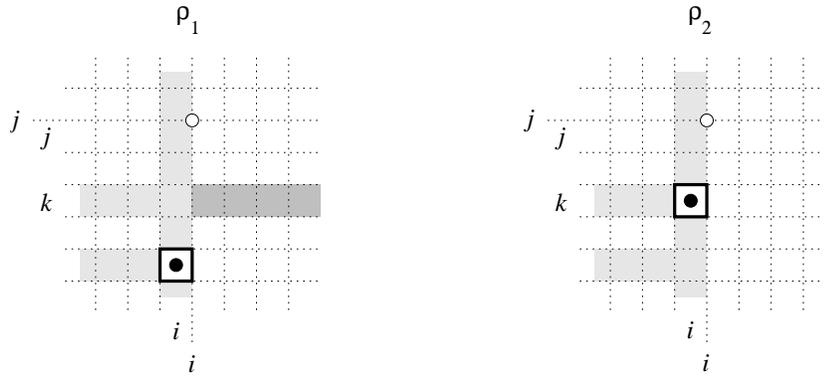}$$
 \caption{Proof of Theorem~\ref{the:fib}: the case $\rho_1(i) <
   \rho_2(i) \leq j$. The shaded areas contain no point.}
 \label{fig:f_theorem_1}
\end{figure}

Consider the second case, $\rho_1(i) \leq j < \rho_2(i)$ (Fig.~\ref{fig:f_theorem_2}).
Since $\rho_1(i) \leq j$ and $\sigma(i)=j$,
the areas $[1, i]\times\{j+1\}$ and $[i+1, n]\times\{j\}$ are empty in the graph of $\rho_1$.
Similarly, since $\rho_2(i) \geq j+1$,
the areas $[1, i]\times\{j\}$ and $[i+1, n]\times\{j+1\}$ are empty in the graph of $\rho_2$.
Since the F-structures of $\rho_1$ and $\rho_2$ coincide in
$[1, i-1] \times [1, n]$ the areas $[1, i-1]\times\{j,j+1\}$
are empty in the graphs of both permutations.
Given that rows cannot be empty, this forces $\rho_1(i) = j$ and
$\rho_2(i) = j+1$ (Fig.~\ref{fig:f_theorem_3}).

\begin{figure}[h]
$$\includegraphics[width=110mm]{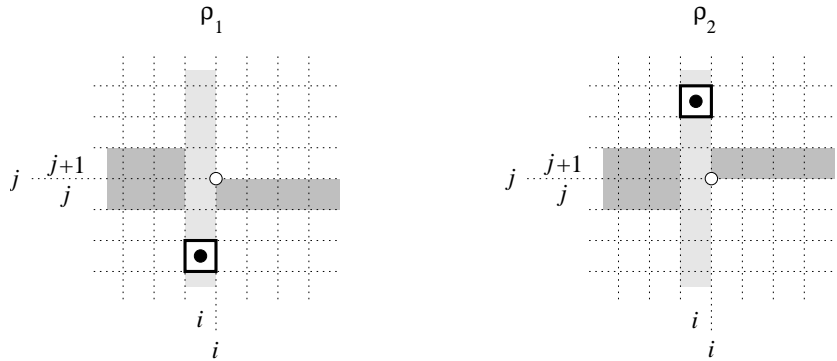}$$
  \caption{Proof of Theorem~\ref{the:fib}: the case $\rho_1(i) \leq j < \rho_2(i)$.}
  \label{fig:f_theorem_2}
\end{figure}

Assume without loss of generality that $\sigma(i+1) < j$.
Since $\rho_1(i)=j$ and $\sigma(i)=j$, we have, by Theorem~\ref{the:r2s},
$\rho_1(i+1) \geq j+1$.
In fact $\rho_1(i+1) > j+1$ since otherwise
the point $(i, \rho_1(i))$ would not form a trivial F-block.
Now, since
$\sigma(i+1) < j$, the area $[i+2, n] \times \{j+1\}$ is empty in the graph of $\rho_1$.
 This area is also empty in the graph of $\rho_2$, since
 $\rho_2(i) = j+1$.
Since the F-structures of $\rho_1$ and $\rho_2$ coincide in
 $[1, i-1] \times [1, n]$,
 the area
$[1, i-1] \times \{j+1\}$
is also empty in the graph of $\rho_1$.
Since $\rho_1(i)=j$ and $\rho_1(i+1)>j+1$,
we have a contradiction: the whole row $j+1$ is empty in the graph of $\rho_1$.

\begin{figure}[h]
$$\includegraphics[width=110mm]{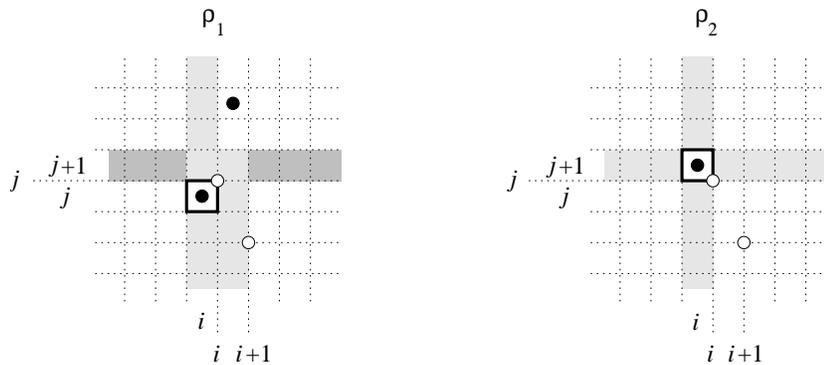}$$
  \caption{Proof of Theorem~\ref{the:fib}: the case $\rho_1(i) \leq j
    < \rho_2(i)$, continued.}
  \label{fig:f_theorem_3}
\end{figure}

Thus, we have proved that all $\rho \in \mathcal{B}$
have the same F-structure.

\medskip

\hrule

\end{appendix}

\end{document}